\newtheorem{thm}{Theorem}[section]
\newtheorem{conj}[thm]{Conjecture}
\newtheorem{lem}[thm]{Lemma}
\newtheorem{hyp}[thm]{Hypothesis}
\newtheorem{prop}[thm]{Proposition}
\newcommand{\Aut}{{\rm Aut}}
\newcommand{\Sz}{{\rm Sz}}
\newcommand{\PSL}{{\rm PSL}}
\newcommand{\SL}{{\rm SL}}
\newcommand{\GL}{{\rm GL}}
\newcommand{\SO}{{\rm SO}}
\newcommand{\PGL}{{\rm PGL}}
\newcommand{\AGL}{{\rm AGL}}
\newcommand{\ASL}{{\rm ASL}}
\newcommand{\PSU}{{\rm PSU}}
\newcommand{\SU}{{\rm SU}}
\newcommand{\Out}{{\rm Out}}
\newcommand{\Ree}{{\rm Ree}}
\newcommand{\Sym}{{\rm Sym}}
\newcommand{\Alt}{{\rm Alt}}
\newcommand{\Sp}{{\rm Sp}}
\newcommand{\Q}{{\rm Q}}
\newcommand{\GU}{{\rm GU}}
\newcommand{\PSp}{{\rm PSp}}
\newcommand{\W}{{\rm W}}
\newcommand{\POmega}{{\rm P\Omega}}
\newcommand{\PGammaL}{{\rm P\Gamma L}}
\newcommand{\AGammaL}{{\rm A\Gamma L}}
\newcommand\herm{{\rm H}}
\newcommand\PGammaU{{\rm P\Gamma U}}
\newcommand\PGammaSp{{\rm P\Gamma Sp}}
\newcommand\PGammaO{{\rm P\Gamma O}}
\newcommand\N{\mathbb{N}}
\newcommand{\AS}{{\rm AS}}
\newcommand{\PA}{{\rm PA}}
\newcommand{\TW}{{\rm TW}}
\newcommand{\HA}{{\rm HA}}
\newcommand{\SD}{{\rm SD}}
\newcommand{\HS}{{\rm HS}}
\DeclareMathOperator{\LCM}{LCM}
\DeclareMathOperator{\soc}{soc}
\newcommand{\liso}{\lesssim}
\newcommand\Wr{{\rm \,wr\, }}
\newcommand{\calQ}{\mathcal{Q}}
\newcommand{\calL}{\mathcal{L}}
\newcommand{\calP}{\mathcal{P}}
\newcommand{\calB}{\mathcal{B}}
\newcommand{\stabkernel}[2]{#1_{#2}^{[1]}}
\renewcommand\le{\leqslant}
\renewcommand\ge{\geqslant}
\title[Locally $2$-transitive generalized quadrangles]{A classification of finite locally $2$-transitive\\ generalized quadrangles}
\author{John Bamberg}
\address{Centre for the Mathematics of Symmetry and Computation\\
School of Mathematics and Statistics\\
The University of Western Australia\\
35 Stirling Highway, Crawley, W.A. 6009, Australia.}
\curraddr{}
\email{john.bamberg@uwa.edu.au}
\author{Cai Heng Li}
\address{Department of Mathematics\\ SUSTech International Center for Mathematics\\ Southern University of Science and Technology\\ Shenzhen, Guangdong 518055, P. R. China.}
\curraddr{}
\email{lich@sustech.edu.cn}
\author{Eric Swartz}
\address{Department of Mathematics\\
College of William \& Mary\\
Williamsburg, VA 23187, USA}
\curraddr{}
\email{easwartz@wm.edu}
\subjclass[2010]{Primary 51E12, 20B05, 20B15,  20B25}
\begin{document}

\maketitle

\begin{abstract}
Ostrom and Wagner (1959) proved that if the automorphism group $G$ of a finite projective plane $\pi$ acts $2$-transitively
on the points of $\pi$, then $\pi$ is isomorphic to the Desarguesian projective plane and $G$ is isomorphic to $\PGammaL(3,q)$
(for some prime-power $q$). In the more general case of a finite rank $2$ irreducible spherical building, also known as a
\emph{generalized polygon}, the theorem of Fong and Seitz (1973) gave a classification of the \emph{Moufang} examples.  A conjecture of Kantor, made in print in 1991, says that there are only two non-classical examples of flag-transitive generalized quadrangles up to duality.  Recently, the authors made progress toward this conjecture by classifying those finite generalized quadrangles
which have an automorphism group $G$ acting transitively on antiflags. In this paper, we take this classification 
much further by weakening the hypothesis to $G$ being transitive on ordered pairs of collinear points
and ordered pairs of concurrent lines.
\end{abstract}

\section{Introduction}

A \textit{generalized quadrangle} is an incidence geometry of points and lines such that every pair of distinct points determines at most one line and every line contains at least two distinct points,
satisfying the following additional condition, often referred to as the ``GQ Axiom'':
\begin{center}
\begin{tabular}{rp{0.67\textwidth}}
\textsc{GQ Axiom}:& \textit{Given a point $P$ and a line $\ell$ not incident with $P$, there is a unique point on $\ell$ collinear with $P$.}  
\end{tabular}
\end{center}

Generalized quadrangles can alternatively be defined in terms of their \textit{incidence graphs}, which are bipartite and have diameter $4$ and girth $8$.  Indeed, a \textit{generalized $n$-gon} is an incidence geometry whose associated incidence graph has diameter $n$ and girth $2n$.  Generalized $n$-gons were introduced by Jacques Tits \cite{titsngon} in an attempt to characterize families of groups in terms of an associated geometry.  The \textit{classical generalized quadrangles}, which are described in detail in Table \ref{tbl:classical}, are the families related to classical almost simple groups of Lie type, and in each case the points and lines each correspond to totally singular subspaces with respect to a sesquilinear or quadratic form.

In the intervening years, generalized quadrangles have been studied thoroughly from a purely geometric perspective \cite{fgq}, but the connection to group theory remains tantalizing.  One of the outstanding open questions in the area is the classification of \textit{flag-transitive} finite generalized quadrangles, that is, the classification of all finite generalized quadrangles with a group of collineations that is transitive on incident point-line pairs. The following
conjecture was made in print by Kantor \cite{Kantor1991}.

\begin{conj}[W. M. Kantor 1991]\samepage
\label{conj:flag}
 If $\mathcal{Q}$ is a finite flag-transitive generalized quadrangle and $\mathcal{Q}$ is not a classical generalized quadrangle, then (up to duality) $\mathcal{Q}$ is the unique generalized quadrangle of order $(3,5)$ or the generalized quadrangle of order $(15,17)$ arising from the Lunelli-Sce hyperoval.
\end{conj}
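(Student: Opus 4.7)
The plan is to leverage the authors' prior classification of antiflag-transitive generalized quadrangles together with the present paper's classification of locally $2$-transitive generalized quadrangles, and to close the remaining gap by ruling out all flag-transitive actions that fail the local $2$-transitivity condition on points or on lines. Let $\calQ=(\calP,\calL)$ be a finite flag-transitive generalized quadrangle of order $(s,t)$ with flag-transitive group $G\le\Aut(\calQ)$, and suppose $\calQ$ is not a classical example, the unique generalized quadrangle of order $(3,5)$, or the generalized quadrangle of order $(15,17)$ arising from the Lunelli-Sce hyperoval. Flag-transitivity guarantees only that the stabilizer $G_P$ of a point $P$ is transitive on the $t+1$ lines through $P$, and dually that $G_\ell$ is transitive on the points of $\ell$; the first step is to split on whether both of these induced local actions are in fact $2$-transitive.

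If both local actions are $2$-transitive, then $G$ acts locally $2$-transitively and the conclusion follows immediately from the main theorem of the present paper. If not, one must show that no non-classical example admits a flag-transitive group whose local action on lines through a point (or points on a line) has rank at least $3$. Here I would carry out an O'Nan-Scott-style analysis of the global action of $G$ on $\calP$, aided by the Classification of Finite Simple Groups, to reduce to cases where $\soc(G)$ is either elementary abelian (affine action) or non-abelian simple. In the almost simple case, the short list of primitive transitive but non-$2$-transitive actions of low rank of the finite simple groups should, combined with Higman's inequality $s\le t^2$ and its dual, the divisibility constraints from $|G|=|\calP|\cdot|G_P|$, and order bounds on point stabilizers of generalized quadrangles, eliminate every candidate whose parameters are not those of a classical quadrangle.

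The principal obstacle will be the affine (and more generally imprimitive) case on $\calP$ or $\calL$, since both exceptional examples in the conjecture lie in exactly this regime: the order $(3,5)$ example has $G\le\Sym(6)$ acting on a duad/syntheme structure, while the Lunelli-Sce example arises from a hyperoval in a Desarguesian projective plane of order $16$ and carries a large elation group. In these situations CFSG-based reductions are not by themselves decisive, and one must instead exploit fine geometric data: the existence and distribution of regular points, the structure of the perp-geometries on pairs $\{P,Q\}^\perp$, possible proper subquadrangles with their own induced flag-transitive actions, and, when $\calQ$ embeds in a classical polar space, the compatibility of $G$ with the associated ovoids or spreads. A plausible route to a complete proof is to show that every non-classical flag-transitive $\calQ$ must contain a regular point whose perp-geometry forces $(s,t)$ into a finite enumerable list that can then be dispatched by hand or by computer. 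I expect that handling the affine/imprimitive case without the local $2$-transitivity crutch supplied by the present paper is the principal technical barrier, and will require genuinely new ideas beyond those used in the antiflag-transitive and locally $2$-transitive reductions.
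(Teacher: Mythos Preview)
The statement you are attempting to prove is labelled as a \emph{conjecture} in the paper, and the paper contains no proof of it. The paper's main theorem (Theorem~\ref{thm:main}) classifies the strictly narrower class of \emph{locally $2$-transitive} generalized quadrangles; Kantor's flag-transitive conjecture remains open, and the authors present Theorem~\ref{thm:main} explicitly as progress toward it, not a resolution. So there is no ``paper's own proof'' to compare against.

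Your proposal is honest about this: you correctly observe that the locally $2$-transitive case is covered by the present paper, and you correctly identify that the residual problem is the flag-transitive actions whose local action on lines through a point (or dually) has rank at least~$3$. But what you offer for that residual case is a research outline, not a proof. The phrases ``I would carry out an O'Nan--Scott-style analysis'', ``should \ldots eliminate every candidate'', ``a plausible route'', and ``will require genuinely new ideas beyond those used'' are explicit admissions that the argument is incomplete. That is the genuine gap: the entire content of the conjecture beyond Theorem~\ref{thm:main} lies in the non-locally-$2$-transitive flag-transitive case, and you have not supplied any mechanism to dispose of it.

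A minor factual point: your description of the order~$(3,5)$ example as having ``$G\le\Sym(6)$ acting on a duad/syntheme structure'' is not right. As recorded in Section~\ref{sect:background}, its full collineation group is $2^6{:}(3.A_6.2)$, acting on points with affine (type~$\HA$) quasiprimitive action; the point stabilizer $3.A_6.2$ has order much larger than $|\Sym(6)|$. This does not affect your overall outline, but it suggests the affine case you flag as difficult is even more intricate than your sketch indicates.
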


Finite generalized polygons satisfying stronger symmetry assumptions, such as the \textit{Moufang condition} \cite{FongSeitz} or \textit{distance-transitivity} \cite{BuekenhoutHvM1994, Ostrom:1959ys}, have been classified.  The current state-of-the-art for generalized quadrangles is the classification of \textit{antiflag-transitive} finite generalized quadrangles in \cite{BLS}, where it was shown that, up to duality, the only non-classical antiflag-transitive generalized quadrangle is the unique generalized quadrangle of order $(3,5)$. (An antiflag is a non-incident point-line pair.)  Notice
that by the GQ Axiom, antiflag-transitivity implies flag-transitivity for a generalized quadrangle.


The aim of this paper is to provide further progress toward Conjecture \ref{conj:flag}.  We study finite generalized quadrangles with a strictly weaker local symmetry condition, \textit{local 2-transitivity}.  
 If $G$ is a subgroup of collineations of a finite generalized quadrangle $\mathcal{Q}$ that is transitive both on pairs of collinear points and pairs of concurrent lines, then $\mathcal{Q}$ is said to be a \textit{locally $(G,2)$-transitive generalized quadrangle}. 
 

Our main result is a complete classification of thick locally $2$-transitive generalized quadrangles, the proof of which relies on the Classification of Finite Simple Groups (CFSG) \cite{CFSG}.

\begin{thm}
 \label{thm:main}
 If $\mathcal{Q}$ is a thick finite locally $(G,2)$-transitive generalized quadrangle and $\mathcal{Q}$ is not a classical generalized quadrangle, then (up to duality) $\mathcal{Q}$ is the unique generalized quadrangle of order $(3,5)$.
\end{thm}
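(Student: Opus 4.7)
The plan is to exploit local $(G,2)$-transitivity through the 2-transitive actions of the point stabilizer $G_P$ on the $t+1$ lines through $P$ and of the line stabilizer $G_\ell$ on the $s+1$ points of $\ell$, invoking CFSG. These two local 2-transitive actions follow from the hypotheses because the unique common point of two concurrent lines is automorphism-invariant---so transitivity on pairs of concurrent lines restricts to 2-transitivity of $G_P$ on lines through $P$---and dually for points on a line; in particular $G$ is flag-transitive. The aim is to upgrade this local 2-transitivity to full antiflag-transitivity of $G$ in as many configurations as possible, so as to reduce to the antiflag-transitive classification of \cite{BLS}, which already identifies the GQ of order $(3,5)$ as the unique non-classical example.

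Via CFSG, each of the 2-transitive groups induced by $G_P$ on the lines through $P$ and by $G_\ell$ on the points of $\ell$ is either almost simple with socle from the short list of 2-transitive simple groups, or affine of prime-power degree. In particular, each of $s+1$ and $t+1$ lies in a very restricted numerical set. Combined with the classical Higman inequalities $s \le t^2$ and $t \le s^2$ and with standard divisibility constraints on the GQ parameters (for instance $s+t \mid st(s+1)(t+1)$), this cuts the parameter space dramatically, and the analysis splits naturally into the affine and almost simple regimes on each side.

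The geometric heart of the argument is to promote local 2-transitivity to antiflag-transitivity. Fix collinear points $P \neq P'$ on the line $m = PP'$. The 2-transitivity of $G_{P'}$ on lines through $P'$ makes $G_{P',m}$ transitive on the $t$ lines through $P'$ distinct from $m$; the 2-transitivity of $G_m$ on points of $m$ forces $G_{P,P'}$ to have index $s$ in $G_{P',m}$. An orbit-counting analysis of $G_{P,P'}$ on those $t$ lines typically yields transitivity, and combined with transitivity on collinear pairs gives antiflag-transitivity, whence \cite{BLS} applies. The principal obstacle will be the residual exceptional cases where this orbit-transfer argument genuinely fails; these must be eliminated by pairing each admissible socle for the action of $G_P$ on lines through $P$ with each admissible socle for the action of $G_\ell$ on points of $\ell$, and either exhibiting an antiflag-transitive overgroup inside $\Aut(\mathcal{Q})$ or deriving a numerical contradiction from the GQ axioms (parameter inequalities, subquadrangle constraints, or properties of triads and spans). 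I expect this final case analysis---particularly the small-degree affine cases---to constitute the bulk of the work.
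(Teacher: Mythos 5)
Your overall framing is right as far as it goes (local $(G,2)$-transitivity is exactly $2$-transitivity of $G_P$ on the $t+1$ lines through $P$ and of $G_\ell$ on the $s+1$ points of $\ell$, it implies flag-transitivity, and an antiflag $(P,\ell)$ is equivalent to a $2$-arc $(P,m,P')$ extended by one of the $t$ lines $\neq m$ through $P'$), but the central mechanism of your proof does not exist. You correctly compute that $G_{P,P'}=G_{P,P',m}$ has index $s$ in $G_{P',m}$, and that $G_{P',m}$ is transitive on the $t$ lines through $P'$ other than $m$; but a subgroup of index $s$ in a transitive group of degree $t$ has no reason to be transitive (its orbits are merely of length at least $t/s$), and you offer no argument that would force transitivity in any case, let alone ``typically.'' This is precisely the gulf between locally $2$-arc-transitive and locally $3$-arc-transitive incidence graphs that makes the problem hard: the paper explicitly does \emph{not} upgrade to antiflag-transitivity and instead runs a completely different reduction (normal quotients $\to$ quasiprimitivity on at least one side via block-size analysis, the Praeger quasiprimitive types, elimination of HA/HS/AS/TW/PA/SD cases, reduction to $G$ almost simple and primitive on points, then to Lie type with $|T|<|T_P|^3$ via Alavi--Burness, and only then a case analysis parallel to \cite{BLS}). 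Your fallback for the ``residual'' cases is also circular: exhibiting an antiflag-transitive overgroup inside $\Aut(\mathcal{Q})$ presupposes knowing $\mathcal{Q}$, which is what is to be determined.

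A second, independent gap is that the numerical constraints you invoke do not cut the parameter space dramatically. The affine--affine configuration leaves $s+1$ and $t+1$ arbitrary prime powers, an infinite family compatible with Higman's inequality and the divisibility condition; in the paper this is the PA-type case, whose elimination requires Walter's classification of simple groups with elementary abelian Sylow $2$-subgroups and Thompson's $3'$-theorem, not parameter arithmetic. More fundamentally, the two local $2$-transitive actions do not determine the global group $G$ or its action on $\mathcal{P}$ and $\mathcal{L}$, and no amount of pairing of local socles substitutes for the global O'Nan--Scott/quasiprimitivity analysis that occupies Sections 3--6 of the paper. As written, your proposal reduces the theorem to an unproved (and in this generality unprovable-by-the-stated-means) claim, so it does not constitute a proof.
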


An equivalent definition of a locally 2-transitive generalized quadrangle is that it has an \textit{incidence graph} that is locally 2-arc-transitive; see Section \ref{sect:background} for details.  Since an equivalent definition of an antiflag-transitive generalized quadrangle is that it has a locally $3$-arc-transitive incidence graph, we have moved from classifying locally 3-arc-transitive graphs to classifying locally 2-arc-transitive graphs, a gulf that is generally considered to be wide by those working in the area of graph symmetry.  Indeed, this distinction is borne out in this paper, where the eventual reduction to the point where we are able to check which almost simple groups of Lie type can act primitively on both the points and the lines of such a generalized quadrangle is far more difficult than in the antiflag-transitive case: compare Sections \ref{sect:general}--\ref{sect:prim1} of this paper to \cite[pp. 1558--1560]{BLS}.   

It should also be emphasized that this paper is a sequel to and in many ways an extension of \cite{BLS}.  In particular, the main theorems in Sections \ref{sect:prim1} and \ref{sect:prim2}, which involve the aforementioned checking of almost simple groups of Lie type, rely on the work done in \cite[Sections 5--8]{BLS}.   

After providing necessary background on permutation groups, graph symmetry, and finite generalized quadrangles in Section \ref{sect:background}, the rest of the paper is devoted to the proof of Theorem \ref{thm:main}.  The strategy for proving Theorem \ref{thm:main} is as follows.  Let $\calQ$ be a thick finite locally $(G,2)$-transitive generalized quadrangle.

\begin{itemize}
 \item We prove (Theorem \ref{thm:quasi0poss}) that $G$ must act quasiprimitively on either points or lines.  This is done by an examination of the size of blocks in a system of imprimitivity and relies heavily on Lemma \ref{lem:Brestrictions}.
 \item We prove (Theorem \ref{thm:quasi1}) that, if $G$ is quasiprimitive on points but not on lines, then $\calQ$ is the unique generalized quadrangle of order $(3,5)$.
 \item It is shown (Theorem \ref{thm:main_rephrased}) that, other than for the generalized quadrangle of order $(3,5)$ and its dual, $G$ must be an almost simple group.
 \item Mainly using Proposition \ref{prop:ASst+1}, it is shown in Theorem \ref{thm:prim} that, up to duality, $G$ must be primitive on points.
 \item Proposition \ref{prop:Lie} shows that an almost simple group acting primitively on points must be of Lie type, and, if $T := \soc(G)$ and $P$ is a point, then Theorem \ref{thm:whenlargeTP} shows that $|T| < |T_P|^3$, i.e., $T_P$ is a \textit{large} subgroup of $T$.
 \item We establish (Theorem \ref{thm:PrimImprim35}) that the unique generalized quadrangle of order $(3,5)$ is the only locally $2$-transitive generalized quadrangle with $G$ primitive on points but not lines.  The case when $G$ is almost simple is eliminated by using the characterization of large maximal subgroups of simple groups by Alavi and Burness \cite{largesubs}, slightly modifying the proofs when necessary from \cite[Sections 5--8]{BLS}.
 \item Finally, if $G$ is primitive on both points and lines, then $\calQ$ must be classical (Theorem \ref{thm:PrimPrimClassical}), a result that now follows essentially immediately from \cite[Sections 5--8]{BLS}.
\end{itemize}

\section{Background}
\label{sect:background}

\subsection{Permutation groups}

Let a group $G$ act on a set $\Omega$, and let $\alpha \in \Omega$.  We denote the \textit{orbit} of $\alpha$ under $G$ by $\alpha^G$ and the \textit{stabilizer} of $\alpha$ in $G$ by $G_\alpha$.  Given a set $\Sigma \subseteq \Omega$, we denote by $G_\Sigma$ the setwise stabilizer of $\Sigma$ in $G$, although it should be noted that $G_{\alpha_1\alpha_2 \dots \alpha_n}$ refers to the subgroup of $G$ that fixes each of $\alpha_1, \dots, \alpha_n \in \Omega$ pointwise.  


If a group $G$ acts transitively on a set $\Omega$, then $G$ is said to be \textit{primitive} on $\Omega$ (or \textit{act primitively} on $\Omega$) if $G$ does not preserve any nontrivial partition of $\Omega$.  In such a case that a group $G$ acts transitively on a set $\Omega$ but does preserve a nontrivial partition of $\Omega$, $G$ is said to be \textit{imprimitive} on $\Omega$ (or \textit{act imprimitively} on $\Omega$) and the subsets that make up this nontrivial partition are called \textit{blocks}.  If a group $G$ acts on the set $\Omega$, then $G$ is said to be \textit{quasiprimitive} on $\Omega$ (or \textit{act quasiprimitively} on $\Omega$) if every nontrivial normal subgroup of $G$ is transitive on $\Omega$.  
The so-called \emph{O'Nan-Scott Theorem} characterizes the finite primitive permutation groups according to a division into certain families,
and a similar characterization (by Praeger \cite{quasiprim1}) categorizes the quasiprimitive groups into eight different types.  Six of these types are relevant to this paper, and we describe these six roughly here. In each case, $G$ denotes the finite quasiprimitive group of a particular type, and $\Omega$ denotes the set on which $G$ acts quasiprimitively.  We do not list the full details of each case; only defining characteristics that are used later.  For more details, see \cite[Section 2]{localsarc}.

\begin{description}
 \item[Holomorph Affine (HA)] $G$ has a unique minimal normal subgroup $N \cong C_p^d$ that is elementary abelian, where $p$ is a prime and $d \in \mathbb{N}$.  The set $\Omega$ is identified with $N$ (which is itself identified with the points of the affine space ${\rm AG}(d,p)$), and, if $\alpha \in \Omega$, $G = N : G_\alpha$, where $G_\alpha$ is an irreducible subgroup of $\GL(d,p)$.
 \item[Holomorph Simple (HS)] $G$ is a subgroup of the \textit{holomorph} ${\rm Hol}(T) = T.\Aut(T)$, where $T$ is a finite nonabelian simple group and $G$ contains $T.{\rm Inn}(T)$.  Here, $\Omega = T$ and $G$ has two minimal normal subgroups, both of which are isomorphic to $T$ and act regularly on $\Omega$.
 \item[Almost Simple (AS)] For some finite simple group $T$, $T \le G \le \Aut(T)$, and $T$ is the unique minimal normal subgroup of $G$.  Our only condition on $\Omega$ in this case is that, for $\alpha \in \Omega$, $T \not\le G_\alpha$.  It is possible that $T$ acts regularly on $\Omega$.
 \item[Simple Diagonal (SD)] $G$ has a unique minimal normal subgroup $N \cong T^k$ for some finite nonabelian simple group $T$ and positive integer $k \ge 2$.  Furthermore, for all $\alpha \in \Omega$, $N_\alpha \cong T$, and $G$ transitively permutes the $k$ simple direct factors of $N$.  The set $\Omega$ may be identified with $T^{k-1}$.
 \item[Twisted Wreath (TW)] $G$ has a unique minimal normal subgroup $N \cong T^k$, where $T$ is a finite nonabelian simple group and $k \ge 2$ is a positive integer, and the set $\Omega$ is identified with $N$.
 \item[Product Action (PA)] $G$ has a unique minimal normal subgroup $N \cong T^k$ for some finite nonabelian simple group $T$ and positive integer $k \ge 2$.  For $\alpha \in \Omega$, $N_\alpha \neq 1$ and is not isomorphic to $T^n$ for any $n \le k$.  Moreover, $G \le \Aut(T) \Wr S_k$, $G$ acts transitively by conjugation on the simple direct factors of $N$, the group $G$ preserves a product structure $\Delta^k$ on $\Omega$, and, if $\alpha \in B \subseteq \Omega$, where $B = (\delta, \delta, \dots, \delta)$ for $\delta \in \Delta$, $N_\alpha$ is a subdirect subgroup of the stabilizer $N_B \cong T_\delta^k$, i.e., $N_\alpha$ projects onto $T_\delta$ in each coordinate.  
\end{description}

\subsection{Graph symmetry}

Let $\Gamma$ be a graph with vertex set $V$.  
If $\alpha \in V$, then the set of all neighbors of $\alpha$, i.e., the set of all vertices adjacent to $\alpha$ in $\Gamma$, is denoted by $\Gamma(\alpha)$, and, if $G \le \Aut(\Gamma)$, then the permutation group induced by $G_\alpha$ on $\Gamma(\alpha)$ is denoted by $G_\alpha^{\Gamma(\alpha)}$.  The distance between two vertices $\alpha, \beta$ is the number of edges in a shortest path from $\alpha$ to $\beta$ and is denoted by $d(\alpha, \beta)$. The group 
\[\stabkernel{G}{\alpha} := \{ g \in G: \beta^g = \beta \text{ for all } \beta \text{ such that } d(\alpha, \beta) \le 1\}\] 
is a normal subgroup of $G_\alpha$ and is referred to as the \textit{kernel of the local action}, since $G_\alpha^{\Gamma(\alpha)} \cong G_\alpha/\stabkernel{G}{\alpha}$.


An \textit{$s$-arc} of a graph is a sequence of $s+1$ vertices $(\alpha_0, \alpha_1, \dots, \alpha_s)$ such that, when $0 \le i \le s-1$, $\alpha_i$ is adjacent to $\alpha_{i+1}$, and, when $1 \le i \le s-1$, $\alpha_{i-1} \neq \alpha_{i+1}$.  Note that repeated vertices are allowed as long as there are no returns.  A subgroup $G \le \Aut(\Gamma)$ is said to be \textit{locally $(G,s)$-arc-transitive} if $\Gamma$ contains an $s$-arc and, for any vertex $\alpha$, $G$ is transitive on the $s$-arcs starting at $\alpha$.  When such a $G$ exists, $\Gamma$ is said to be \textit{locally $s$-arc-transitive}.  While it is possible for a graph $\Gamma$ to be locally $(G,s)$-arc-transitive while $G$ is intransitive on $V$, in this case the group $G$ would be transitive on the edges of $\Gamma$, and hence $G$ would have exactly two orbits on $V$.  The following lemma provides a characterization of locally $(G,2)$-arc-transitive graphs in terms of the action of the stabilizer of a vertex on its neighbors.

\begin{lem}[{\cite[Lemma 3.2]{localsarc}}]
\label{lem:loc2trans}
 Let $\Gamma$ be a graph such that all vertices have valency at least two.  Then, $\Gamma$ is locally $(G,2)$-arc-transitive if and only if for every vertex $\alpha$, $G_\alpha^{\Gamma(\alpha)}$ is a $2$-transitive permutation group, i.e., if and only if $G_\alpha$ acts $2$-transitively on $\Gamma(\alpha)$.
\end{lem}

A graph $\Gamma$ with group of automorphisms $G$ is said to be \textit{$G$-locally primitive} if for any $\alpha \in V$ the induced action of $G_\alpha$ on the neighbors of $\alpha$ is primitive, that is, if $G_\alpha^{\Gamma(\alpha)}$ is primitive on $\Gamma(\alpha)$.  When such a $G$ exists, $\Gamma$ is said to be \textit{locally primitive}. 

If a group $G \le \Aut(\Gamma)$ has a normal subgroup $N$ that is intransitive on $V$, then we define the \textit{(normal) quotient graph} $\Gamma_N$ to have vertex set the $N$-orbits of $V$, where two $N$-orbits $\Sigma_1$ and $\Sigma_2$ are adjacent in $\Gamma_N$ if and only there exist vertices $\alpha \in \Sigma_1$ and $\beta \in \Sigma_2$ such that $\alpha$ is adjacent to $\beta$ in $\Gamma$.  By a result of Giudici, Li, and Praeger \cite{localsarc}, if $\Gamma$ is connected and locally $(G,s)$-arc-transitive, then either $\Gamma_N$ is locally $(G/N,s)$-arc-transitive or $\Gamma_N$ is a  complete bipartite graph $K_{1,n}$.  This demonstrates the importance of studying locally $s$-arc-transitive graphs with a group of automorphisms acting quasiprimitively on at least one orbit of vertices, and the following results characterize the types of possible quasiprimitive actions of $G$ on the orbits of vertices of a locally $(G,s)$-arc-transitive graph $\Gamma$.

\begin{lem}[{\cite[Theorem 1.3]{localsarc}}]
\label{lem:types1}
 Let $\Gamma$ be a finite locally $(G,s)$-arc-transitive connected graph with $s \ge 2$ such that $G$ acts faithfully on both its orbits $\Delta_1$ and $\Delta_2$ of vertices but only acts quasiprimitively on $\Delta_1$.  Then the quasiprimitive type of $G$ on $\Delta_1$ is  of type $\HA$, $\HS$, $\AS$, $\PA$, or $\TW$.
\end{lem}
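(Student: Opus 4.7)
The plan is to rule out, for the quasiprimitive action of $G$ on $\Delta_1$, the three types not listed in the conclusion, namely $\SD$, $\HC$, and $\CD$. Fix a vertex $\alpha \in \Delta_1$, a neighbor $\beta \in \Gamma(\alpha) \subseteq \Delta_2$, and let $N$ be a minimal normal subgroup of $G$. Quasiprimitivity on $\Delta_1$ forces $N$ to be transitive on $\Delta_1$, and $s \ge 2$ makes $G_\alpha^{\Gamma(\alpha)}$ a $2$-transitive permutation group.

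The basic lever is that $N_\alpha^{\Gamma(\alpha)}$ is a normal subgroup of this $2$-transitive group, hence is either trivial or transitive on $\Gamma(\alpha)$. In the \emph{trivial} case, $N_\alpha$ fixes every neighbor of $\alpha$ pointwise; iterating along $2$-arcs, using connectedness of $\Gamma$, normality of $N$ in $G$, and the faithful action of $G$ on $\Delta_2$, one deduces $N_\alpha = 1$, so $N$ acts regularly on $\Delta_1$. Among the excluded types only $\HC$ is regular, so I would finish this branch by exploiting the fact that $\HC$-type groups have a \emph{second} minimal normal subgroup $N' \cong N$ commuting with $N$. Applying the same dichotomy to $N'$ forces $N'$ also to act regularly on $\Delta_1$, and then the commutation $[N,N']=1$ together with the faithfulness of $G$ on $\Delta_2$ can be played against each other: one of $N, N'$ must act trivially on some $N$-orbit in $\Delta_2$, contradicting either faithfulness on $\Delta_2$ or the assumption that $G$ has $\HC$-type on $\Delta_1$.

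If instead $N_\alpha^{\Gamma(\alpha)}$ is transitive, then $N_\alpha \neq 1$, so the regular types ($\HA$, $\HS$, $\HC$, $\TW$) are excluded and only $\SD$ and $\CD$ remain to rule out. For $\SD$, write $N = T^k$ with $T$ nonabelian simple, $k \ge 2$, and $N_\alpha$ a full diagonal subgroup $\cong T$. The faithful action of $G$ on $\Delta_2$ makes $N$ faithful on $\Delta_2$, so $N_\beta$ is also a diagonal copy of $T$ (possibly with respect to a different partition of the $k$ simple factors). I would then analyze the conjugation action of $G_\alpha$ on the set $\{T_1, \dots, T_k\}$ of simple direct factors of $N$: the $2$-transitive local action must be compatible with this permutation action and with the rigid diagonal embedding of $N_\alpha$ and $N_\beta$, and pinning down the relative positions of these two diagonals forces $k=1$, contradicting the $\SD$ hypothesis. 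Type $\CD$ is then handled by reducing to $\SD$ on each block of the $G$-invariant partition of $\{T_1, \dots, T_k\}$ provided by the compound-diagonal structure.

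The main obstacle will be the $\SD$ analysis: the full-diagonal subgroup $N_\alpha \cong T$ admits many candidate faithful $2$-transitive actions, so a clean contradiction requires careful bookkeeping of how $N_\alpha$ and $N_\beta$ sit in $N$ together, and may require invoking the CFSG-based classification of finite $2$-transitive groups with nonabelian simple socle. Once that step is in place, the compound case $\CD$ and the second-socle argument for $\HC$ are essentially formal consequences of the $\SD$ and regular-case analyses.
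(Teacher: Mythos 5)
The paper does not prove this statement itself --- it is imported verbatim from Giudici--Li--Praeger \cite[Theorem 1.3]{localsarc} --- so I am assessing your argument on its own terms, and it has a fatal structural flaw: your exclusion of type $\SD$ never invokes the hypothesis that $G$ is \emph{not} quasiprimitive on $\Delta_2$, and therefore cannot work. Type $\SD$ genuinely occurs on one orbit of a locally $(G,2)$-arc-transitive graph when $G$ is quasiprimitive on \emph{both} orbits: this is exactly the case $\{X,Y\}=\{\SD,\PA\}$ of Lemma \ref{lem:possactions}, with examples constructed in \cite{localdifferent}. Any ``bookkeeping of how the two diagonals sit'' that forces $k=1$ from the $\SD$ hypothesis alone would contradict those examples. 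The concrete false step is your claim that faithfulness of $N$ on $\Delta_2$ forces $N_\beta$ to be a full diagonal: in the genuine $\{\SD,\PA\}$ examples $N_\beta$ is a subdirect (product-action type) subgroup of $T_\delta^k$ with $T_\delta<T$ proper, not a diagonal, and nothing you assume rules this out. The exclusion of $\SD$ really does hinge on taking a minimal normal subgroup $M$ that is \emph{intransitive} on $\Delta_2$ (which exists precisely because $G$ fails to be quasiprimitive there) and is automatically transitive on $\Delta_1$; the resulting star normal quotient $\Gamma_M\cong K_{1,r}$ is what drives the argument in \cite{localsarc}.

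Your case dichotomy is also broken. From $N_\alpha^{\Gamma(\alpha)}=1$ you cannot conclude $N_\alpha=1$: the iteration along $2$-arcs fails because for $\beta\in\Gamma(\alpha)\subseteq\Delta_2$ the group $N_\beta^{\Gamma(\beta)}$ need not be trivial, so $N_\alpha\le N_\beta$ gives no control over the second step of the arc. Indeed, whenever $N$ is transitive on $\Delta_1$ and intransitive on $\Delta_2$, each $N$-orbit on $\Delta_2$ meets $\Gamma(\alpha)$ in at most one vertex (its intersections with $\Gamma(\alpha)$ are blocks for the primitive group $G_\alpha^{\Gamma(\alpha)}$, and they cannot be all of $\Gamma(\alpha)$ by connectedness), so $N_\alpha^{\Gamma(\alpha)}=1$ automatically, while $N_\alpha$ is typically large and $N_\beta^{\Gamma(\beta)}$ is transitive --- the almost simple star-quotient examples of \cite{localstar} used in Lemma \ref{lem:notASon1} are exactly of this form. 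Consequently an $\SD$-type group whose local action at $\alpha$ is trivial lands in your ``regular'' branch, where you only consider type HC, and is never ruled out. Finally, your HC argument is too vague to evaluate, and whatever it is must be sharp enough to kill HC while sparing $\HS$, which appears in the conclusion and likewise has two regular minimal normal subgroups centralizing each other; as written it does not distinguish them.
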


\begin{lem}[{\cite[Theorem 1.2]{localsarc}}]
\label{lem:possactions}
Let $\Gamma$ be a connected locally $(G,2)$-arc-transitive graph such that $G$ has two orbits on vertices and $G$ acts faithfully and quasiprimitively on both orbits with type $\{X,Y\}.$  Then either $X = Y \in \{\HA, \TW, \AS, \PA \}$ or $\{X,Y\} = \{\SD, \PA\}$. 
\end{lem}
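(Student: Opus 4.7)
The plan is to exploit that the isomorphism type and multiplicity of the minimal normal subgroups of $G$ are intrinsic to $G$ as an abstract group, and so must agree between the two quasiprimitive actions on $\Delta_1$ and $\Delta_2$. First I would classify the possible pairs $(X,Y)$ by the structure of $\soc(G)$. An abelian socle occurs only in type $\HA$, forcing $X = Y = \HA$; a nonabelian simple socle with a unique minimal normal subgroup occurs only in $\AS$, forcing $X = Y = \AS$; a socle of the form $T \times T$ with two minimal normal subgroups each isomorphic to the simple group $T$ occurs only in $\HS$, forcing $X = Y = \HS$; and a socle $T^k$ with $k \ge 2$ and a unique minimal normal subgroup restricts $\{X, Y\} \subseteq \{\SD, \TW, \PA\}$.

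The bulk of the argument is then to rule out the bad pairs $(\HS,\HS)$, $(\SD,\SD)$, $\{\SD,\TW\}$, and $\{\TW,\PA\}$. For each of these I would fix an edge $\{\alpha,\beta\}$ with $\alpha \in \Delta_1, \beta \in \Delta_2$, and use that by local $2$-arc-transitivity both $G_\alpha^{\Gamma(\alpha)}$ and $G_\beta^{\Gamma(\beta)}$ are $2$-transitive; combined with the socle structure and the formula $|\Delta_i| = [G : G_{\alpha_i}]$, this constrains the valencies and the edge-stabilizer structure. In the $\TW$ case the socle acts regularly on one side, so $N_\beta = 1$ and $G_\beta$ embeds into $\Out(T) \Wr S_k$; this tightly bounds the valency on that side and, when paired with $\PA$ or $\SD$ on the other, forces a numerical incompatibility with $2$-transitivity. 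For $(\SD,\SD)$ the diagonal stabilizers $N_\alpha, N_\beta \cong T$ must intersect in a subgroup fixing the edge, and the analysis of $N$-orbits on neighborhoods together with connectivity produces a contradiction. For $(\HS,\HS)$, the two regular minimal normal subgroups place $G_\alpha$ and $G_\beta$ inside $\Aut(T)$ acting by conjugation, and the two $2$-transitive local actions on neighborhoods (each a subset of a regular $T$-orbit) cannot be sustained simultaneously.

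The hardest part will be the asymmetric pair $\{\SD,\PA\}$: I expect its genuine occurrence to hinge on a subdirect subgroup of $T^k$ being diagonal with respect to one action and projecting faithfully onto each coordinate with respect to the other, and separating this realizable configuration from the forbidden $(\SD,\SD)$, $\{\SD,\TW\}$, and $\{\TW,\PA\}$ cases requires simultaneous bookkeeping of the socle structure, the point and edge stabilizers, and the $2$-transitivity constraint on neighborhoods. This case analysis — in particular the interplay between a full diagonal socle stabilizer on one side and a non-diagonal subdirect stabilizer on the other — is the technical heart of the proof.
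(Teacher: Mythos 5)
First, a point of orientation: the paper does not prove this statement at all. It is quoted verbatim from Giudici, Li and Praeger \cite[Theorem 1.2]{localsarc}, so what you have written is a plan for reproving a substantial external theorem, and it has to be judged against that standard.

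There are two genuine gaps. The first is taxonomic: Praeger's quasiprimitive O'Nan--Scott theorem has \emph{eight} types, and your case division by socle structure silently omits two of them. A group with two minimal normal subgroups, each isomorphic to $T^k$, is of type ${\rm HS}$ only when $k=1$; for $k\ge 2$ it is of type ${\rm HC}$ (holomorph compound), which you never mention. Likewise a unique nonabelian minimal normal subgroup $T^k$ with $k\ge 2$ admits not only the types $\SD$, $\TW$, $\PA$ but also ${\rm CD}$ (compound diagonal), so your restriction ``$\{X,Y\}\subseteq\{\SD,\TW,\PA\}$'' is unjustified and the pairs involving ${\rm CD}$ are never excluded. (The six types listed in Section 2 of the present paper are only those ``relevant to this paper''; a from-scratch proof of the lemma cannot inherit that restriction.) The second gap is that every elimination in your second and third paragraphs is an assertion rather than an argument: ``forces a numerical incompatibility with $2$-transitivity,'' ``produces a contradiction,'' ``cannot be sustained simultaneously,'' and ``I expect \dots requires simultaneous bookkeeping'' name the desired conclusions without supplying the mechanism. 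In the actual proof these steps are where all the work lies: one needs, for instance, that for $s\ge 2$ the stabilizer $N_\alpha$ of the (unique) minimal normal subgroup acts transitively on $\Gamma(\alpha)$ (\cite[Lemma 6.2]{localsarc}, used elsewhere in this paper), the semiregularity consequences of local primitivity for the subproducts $\prod_{j\ne i}T_j$, and in the $\{\SD,\SD\}$ and $\{\TW,\cdot\}$ cases, factorization results for the simple group $T$ and for $T^k$ against its full diagonal and subdirect subgroups. Without those ingredients the outline does not close any of the hard cases, so as it stands the proposal is a roadmap, not a proof.
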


The graph $\Gamma$ is said to be a \textit{cover} of $\Gamma_N$ if $|\Gamma(\alpha) \cap \Sigma_2| = 1$ for each edge $\{\Sigma_1, \Sigma_2\}$ in $\Gamma_N$ and $\alpha \in \Sigma_1$.  The following lemma provides a characterization of the case when $\Gamma$ is a locally $(G,s)$-arc-transitive bipartite graph such that $G$ contains a nontrivial normal subgroup that is intransitive on each bipart.

\begin{lem}[{\cite[Lemma 5.1]{localsarc}}]
\label{lem:bipintrans}
 Let $\Gamma$ be a connected $G$-locally primitive bipartite graph with $G$-orbits $\Delta_1$ and $\Delta_2$ on $V(\Gamma)$ such that $|\Delta_i| > 1$ for each $i$.  Suppose that there exists $N \lhd G$ such that $N$ is intransitive on both $\Delta_1$ and $\Delta_2$.  Then
 \begin{enumerate}[(i)]
  \item $\Gamma$ is a cover of $\Gamma_N$.
  \item $N$ acts semiregularly on $V(\Gamma)$ and $G^{V_N} \cong G/N$, where $V_N$ is the vertex set of $\Gamma_N$.
  \item $\Gamma_N$ is $(G/N)$-locally primitive.  Furthermore, if $\Gamma$ is locally $(G,s)$-arc-transitive, then $\Gamma_N$ is locally $(G/N, s)$-arc-transitive.
 \end{enumerate}
\end{lem}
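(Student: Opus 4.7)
The plan is to establish the three parts of the lemma in order, with part (i) carrying the essential structural content and parts (ii) and (iii) then following by largely formal arguments. The main tool throughout will be local primitivity, which I would use to control how $N$-orbits meet neighbourhoods.

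For (i), I would fix a vertex $\alpha \in \Delta_i$ and consider the nonempty sets $\Gamma(\alpha) \cap \Sigma$ as $\Sigma$ ranges over the $N$-orbits on $\Delta_j$. Because $N \lhd G$, the stabiliser $G_\alpha$ permutes the $N$-orbits and so preserves this partition of $\Gamma(\alpha)$. Primitivity of $G_\alpha^{\Gamma(\alpha)}$ then leaves only two possibilities: either every $\Gamma(\alpha) \cap \Sigma$ is a singleton, in which case $\Gamma$ covers $\Gamma_N$ as required, or $\Gamma(\alpha) \subseteq \Sigma$ for a single $N$-orbit $\Sigma$. In the second case $\Gamma(n\alpha) = n\Gamma(\alpha) \subseteq n\Sigma = \Sigma$ for every $n \in N$, so the $N$-orbit of $\alpha$ has exactly one neighbour in $\Gamma_N$; combined with $G$-transitivity on $\Delta_i$ this means every vertex on the $\Delta_i/N$ side of $\Gamma_N$ has valency $1$. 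A connected bipartite graph with every vertex on one side of valency $1$ must have a single vertex on the other side, contradicting the intransitivity of $N$ on $\Delta_j$. This rules out the second possibility and proves (i).

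For (ii), the cover property immediately forces $N_\alpha$ to fix every neighbour of $\alpha$, since it preserves each singleton $\Gamma(\alpha) \cap N\beta$; hence $N_\alpha \le N_\beta$ for every neighbour $\beta$, and the same argument applied at $\beta$ shows $N_\alpha$ fixes the neighbours of $\beta$, so by connectedness $N_\alpha$ acts trivially on $V(\Gamma)$ and thus $N_\alpha = 1$. For the identification $G^{V_N} \cong G/N$ the content is that the kernel of the $G$-action on $V_N$ equals $N$: any $g$ in this kernel can be written $g = ng'$ with $n \in N$ and $g' \in G_\alpha$, and $g'$ also lies in the kernel, so the cover property forces $g'$ to fix each neighbour of $\alpha$, and the same connectedness argument gives $g' = 1$. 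For (iii), semiregularity yields $G_\Sigma/N \cong G_\alpha$ for $\alpha \in \Sigma$, and the cover property provides a $G_\alpha$-equivariant bijection $\Gamma(\alpha) \to \Gamma_N(\Sigma)$, $\beta \mapsto N\beta$, so primitivity at $\alpha$ transfers to primitivity at $\Sigma$. For the $s$-arc statement, each $s$-arc in $\Gamma_N$ starting at $\Sigma$ lifts uniquely to an $s$-arc in $\Gamma$ starting at $\alpha$ by repeated application of the cover property (the no-return condition carries over because distinct consecutive $N$-orbits pull back to distinct vertices), so $G$-transitivity on $s$-arcs at $\alpha$ descends to $G/N$-transitivity at $\Sigma$.

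The hard part will be the single-cell case in (i): it is the one place where connectedness, bipartiteness of $\Gamma_N$, and the intransitivity of $N$ on \emph{both} biparts must be combined simultaneously to derive a contradiction. Everything else — the semiregularity, the identification of the kernel with $N$, and the transfer of local primitivity and $s$-arc-transitivity to the quotient — is a formal consequence of the cover property established in (i).
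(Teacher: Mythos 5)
Your proposal is correct. The paper does not prove this lemma at all --- it is quoted directly from Giudici--Li--Praeger \cite[Lemma 5.1]{localsarc} --- and your argument (local primitivity forces the $G_\alpha$-invariant partition of $\Gamma(\alpha)$ by $N$-orbits to be trivial, with the one-block alternative excluded because it would give every vertex on one side of the connected quotient valency $1$ and hence force $N$ to be transitive on the other bipart) is the standard proof of that result; parts (ii) and (iii) then follow formally from the cover property exactly as you describe.
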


On the other hand, the following lemma provides information in the case when every nontrivial normal subgroup of automorphisms is transitive on at least one orbit of vertices.

\begin{lem}{\cite[Lemma 5.4]{localsarc}}
\label{lem:biptrans}
 Let $\Gamma$ be a finite connected graph such that $G \le \Aut(\Gamma)$ has two orbits $\Delta_1$ and $\Delta_2$ on vertices and $G$ acts faithfully on both orbits.  Suppose that every nontrivial normal subgroup $N$ of $G$ is transitive on at least one of the $\Delta_i$.  Then, $G$ acts quasiprimitively on at least one of its orbits.
\end{lem}

Finally, the following technical but useful lemma provides information about the composition factors of the vertex stabilizer $G_\alpha$.  

\begin{lem}[{\cite[Theorem 1.1]{Song}}]\label{comp-factor}
Let $\Gamma$ be a connected graph, and let $G\le\Aut(\Gamma)$ be transitive on the edge set.
Let $\{\alpha,\beta\}$ be an edge of $\Gamma$.
Then each composition factor of $G_\alpha$ is a composition factor of $G_\alpha^{\Gamma(\alpha)}$, $G_{\alpha\beta}^{\Gamma(\beta)}$, or $G_{\alpha\beta}^{\Gamma(\alpha)}$.
Moreover, if $|\Gamma(\alpha)|\geqslant|\Gamma(\beta)|$, then $|\Gamma(\alpha)|$ is not smaller than the smallest permutation degree
of any composition factor of $G_\alpha$.
\end{lem}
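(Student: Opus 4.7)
The plan is to combine one normal-subgroup reduction with an iteration along a walk in $\Gamma$, exploiting edge-transitivity at each step.

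The action of $G_\alpha$ on $\Gamma(\alpha)$ has kernel $\stabkernel{G}{\alpha}$, yielding the short exact sequence
\[
1 \to \stabkernel{G}{\alpha} \to G_\alpha \to G_\alpha^{\Gamma(\alpha)} \to 1.
\]
The composition factors of $G_\alpha$ therefore decompose into those of $G_\alpha^{\Gamma(\alpha)}$ and those of $\stabkernel{G}{\alpha}$, so it remains to place the composition factors of $\stabkernel{G}{\alpha}$ inside either $G_{\alpha\beta}^{\Gamma(\alpha)}$ or $G_{\alpha\beta}^{\Gamma(\beta)}$.

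For this, choose a walk $v_0=\alpha,v_1=\beta,v_2,\ldots,v_N$ in $\Gamma$ whose vertex set is all of $V(\Gamma)$ (possible by connectedness) and set $K_0:=\stabkernel{G}{\alpha}$ and $K_{i+1}:=K_i\cap\stabkernel{G}{v_{i+1}}$. An induction shows $K_i$ pointwise fixes $v_j$ and $\Gamma(v_j)$ for every $j\le i$; in particular $K_i$ already fixes $v_{i+1}\in\Gamma(v_i)$, so $K_{i+1}$ is exactly the kernel of the $K_i$-action on $\Gamma(v_{i+1})$ and is normal in $K_i$. Thus $K_i/K_{i+1}$ embeds in the stabilizer of $v_i$ inside $G_{v_{i+1}}^{\Gamma(v_{i+1})}$, which is $G_{v_iv_{i+1}}^{\Gamma(v_{i+1})}$. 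Edge-transitivity supplies $g\in G$ with $\{\alpha,\beta\}^g=\{v_i,v_{i+1}\}$; conjugation by $g$ identifies $G_{v_iv_{i+1}}^{\Gamma(v_{i+1})}$, as a permutation group, with $G_{\alpha\beta}^{\Gamma(\alpha)}$ or $G_{\alpha\beta}^{\Gamma(\beta)}$, depending on which endpoint of $\{\alpha,\beta\}$ is mapped to $v_{i+1}$.

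The subnormal chain $K_0\trianglerighteq K_1\trianglerighteq\cdots\trianglerighteq K_N$ collapses: since the walk covers $V(\Gamma)$, the group $K_N$ pointwise fixes every vertex of $\Gamma$, and so $K_N=1$ because $G\le\Aut(\Gamma)$ acts faithfully. Hence the composition factors of $\stabkernel{G}{\alpha}$ are exactly the composition factors of the successive $K_i/K_{i+1}$, each of which lies inside the prescribed stabilizer action, proving the first assertion. For the ``moreover'' clause, each of $G_\alpha^{\Gamma(\alpha)}$, $G_{\alpha\beta}^{\Gamma(\alpha)}$, $G_{\alpha\beta}^{\Gamma(\beta)}$ is a permutation group of degree at most $|\Gamma(\alpha)|$ (using $|\Gamma(\beta)|\le|\Gamma(\alpha)|$), and one invokes the standard fact that a simple composition factor of a subgroup of $S_n$ has minimum faithful permutation degree at most $n$. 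The principal obstacle is the careful bookkeeping of the edge-transitivity conjugation that places each $K_i/K_{i+1}$ inside one of the two prescribed local actions, and the realisation that the walk must be chosen to traverse every vertex of $\Gamma$ so that the chain collapses to the identity.
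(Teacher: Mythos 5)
Your reduction of the composition factors of $G_\alpha$ to those of $G_\alpha^{\Gamma(\alpha)}$ together with those of $\stabkernel{G}{\alpha}$ is fine, as are the construction of the chain $K_0\trianglerighteq K_1\trianglerighteq\cdots\trianglerighteq K_N=1$ along a walk covering $V(\Gamma)$ and the conjugation supplied by edge-transitivity. The gap sits exactly where the lemma has its real content: from the containment $K_i/K_{i+1}\cong K_i^{\Gamma(v_{i+1})}\le G_{v_iv_{i+1}}^{\Gamma(v_{i+1})}$ you conclude that the composition factors of $K_i/K_{i+1}$ are composition factors of $G_{\alpha\beta}^{\Gamma(\alpha)}$ or $G_{\alpha\beta}^{\Gamma(\beta)}$. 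That inference is invalid for a mere subgroup embedding: a composition factor of a subgroup need not be a composition factor of the overgroup (for example $C_7\le S_7$, whose composition factors are $A_7$ and $C_2$). As written, your argument only shows that each composition factor of $G_\alpha$ is a composition factor of a \emph{subgroup} of one of the three local groups, which is strictly weaker than the statement and weaker than what is used later in the paper (e.g.\ in Lemma \ref{lem:Mstabs}).

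The missing ingredient is subnormality, and it is available. Setting $L_j:=\bigcap_{m=j}^{i}\stabkernel{G}{v_m}$, so that $L_0=K_i$ and $L_i=\stabkernel{G}{v_i}$, one has $L_{j+1}\le\stabkernel{G}{v_{j+1}}\le G_{v_j}$ (because $v_j\in\Gamma(v_{j+1})$), so $L_{j+1}$ normalizes $\stabkernel{G}{v_j}\trianglelefteq G_{v_j}$ and hence $L_j=L_{j+1}\cap\stabkernel{G}{v_j}\trianglelefteq L_{j+1}$; combined with $\stabkernel{G}{v_i}\trianglelefteq G_{v_iv_{i+1}}$ this yields the subnormal chain
\[
K_i=L_0\trianglelefteq L_1\trianglelefteq\cdots\trianglelefteq L_i=\stabkernel{G}{v_i}\trianglelefteq G_{v_iv_{i+1}}.
\]
Images of subnormal subgroups under the restriction homomorphism to $\Sym(\Gamma(v_{i+1}))$ remain subnormal, so $K_i^{\Gamma(v_{i+1})}$ is subnormal in $G_{v_iv_{i+1}}^{\Gamma(v_{i+1})}$, and refining a subnormal series to a composition series shows that the composition factors of a subnormal subgroup genuinely are composition factors of the ambient group. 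With this inserted, the remainder of your argument, including the collapse $K_N=1$ by faithfulness and the ``moreover'' clause via the standard fact that every composition factor of a subgroup of $S_n$ has a faithful permutation representation of degree at most $n$, goes through. (The paper itself does not prove this lemma but quotes it from Song; your outline matches the standard local-analysis proof once the subnormality point is supplied.)
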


\subsection{Finite generalized quadrangles}

Let $\mathcal{Q}$ be a finite generalized quadrangle with point set $\mathcal{P}$ and line set $\mathcal{L}$.  If each point has at least three lines passing through it, and each line contains at least three points, then a simple combinatorial argument shows that each line is incident with a constant number of points, and each point is incident with a constant number of lines. The generalized quadrangle $\mathcal{Q}$ is said to have \textit{order} $(s,t)$ if each line is incident with $s+1$ points and each point is incident with $t+1$ lines.  If both $s$ and $t$ are at least $2$, then the generalized quadrangle is said to be \textit{thick}.  The following omnibus lemma details how $(s,t)$ determines the total number of points and lines and how the two parameters are constrained.

\begin{lem}[{\cite[1.2.1, 1.2.2, 1.2.3, 1.2.5]{fgq}}]
\label{lem:GQbasics}
Let $\mathcal{Q}$ be a finite generalized quadrangle of order $(s,t)$, where $s,t > 1$.  Then the following hold:
\begin{itemize}
 \item[(i)] $|\mathcal{P}| = (s+1)(st + 1)$ and $|\mathcal{L}| = (t+1)(st+1)$;
 \item[(ii)] $s + t$ divides $st(s+1)(t+1)$;
 \item[(iii)] $t \leqslant s^2$ and $s \leqslant t^2$;
 \item[(iv)] if $s < t^2$, then $s \leqslant t^2 - t$, and if $t < s^2$, then $t \leqslant s^2 - s$.
\end{itemize}
\end{lem}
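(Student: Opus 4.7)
My plan is to verify the four items by combining a direct combinatorial argument with standard strongly-regular-graph theory.

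For (i), I would fix a point $P$ and partition $\calP$ by distance from $P$ in the collinearity graph. The $t+1$ lines through $P$ account for $s(t+1)$ points collinear with $P$. For a point $Q$ not collinear with $P$, applying the GQ Axiom to $Q$ and each of the $t+1$ lines through $P$ produces $t+1$ distinct common neighbours of $P$ and $Q$. A double count of ordered pairs $(R,Q)$ in which $R$ is collinear with $P$ and $Q$ is collinear with $R$ but not with $P$---combined with the fact that a generalized quadrangle contains no triangles, so that on each line through $R$ not passing through $P$ every point other than $R$ fails to be collinear with $P$---yields $s^{2}t$ points at distance two from $P$. Summing $1 + s(t+1) + s^{2}t = (s+1)(st+1)$ gives $|\calP|$, and the same argument applied to the dual incidence structure (a generalized quadrangle of order $(t,s)$) gives $|\calL|$.

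For (ii) and (iii) the point collinearity graph is strongly regular with parameters
\[
\big((s+1)(st+1),\; s(t+1),\; s-1,\; t+1\big),
\]
which follows immediately from (i) together with the triangle-free/GQ-Axiom computation above. The adjacency matrix therefore has eigenvalues $s(t+1)$, $s-1$ and $-(t+1)$, and a direct expansion shows that the discriminant $(\mu-\lambda)^{2}+4(k-\mu)$ simplifies to $(s+t)^{2}$. The standard closed formula for the multiplicities $m_{r},m_{\theta}$ of the nonprincipal eigenvalues forces $s+t$ to divide the integer $(v-1)(\mu-\lambda)-2k$; a short simplification identifies this integer with a scalar multiple of $st(s+1)(t+1)$, yielding (ii). For (iii), the nonnegativity condition $m_{\theta}\ge 0$ rearranges to Higman's inequality $t\le s^{2}$, and the dual argument gives $s\le t^{2}$.

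For (iv), integrality and nonnegativity are not enough, and I would invoke the Krein conditions on the strongly regular graph above: expanding the relevant Krein parameter tightens $t\le s^{2}$ to $t\le s^{2}-s$ whenever $t<s^{2}$, and by duality $s\le t^{2}-t$ whenever $s<t^{2}$. The main obstacle will be the algebraic bookkeeping in (ii) and (iv)---extracting the clean factorization $st(s+1)(t+1)$ from $(v-1)(\mu-\lambda)-2k$, and chasing the Krein inequality through to the stated closed form---while parts (i) and (iii) are essentially direct consequences of the GQ Axiom together with the standard multiplicity formulas for strongly regular graphs.
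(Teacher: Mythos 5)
The paper does not prove this lemma; it is quoted verbatim from Payne--Thas \cite[1.2.1--1.2.5]{fgq}, so your proposal has to stand on its own. Parts (i) and (ii) are essentially fine: the distance partition count for (i) is the standard argument, and (ii) does come from integrality of the eigenvalue multiplicities of the collinearity graph. (For (ii) I would work with the multiplicity itself rather than with $(v-1)(\mu-\lambda)-2k$: solving the trace equations gives the multiplicity of the eigenvalue $s-1$ \emph{exactly} as $st(s+1)(t+1)/(s+t)$, whereas $(v-1)(\mu-\lambda)-2k$ is congruent to $2st(s+1)(t+1)$ modulo $s+t$, so its divisibility by $s+t$ only yields the statement with an extra factor of $2$.)

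The real problems are in (iii) and (iv). For (iii), nonnegativity of the multiplicities is vacuous here: the two multiplicities are $st(s+1)(t+1)/(s+t)$ and $s^2(st+1)/(s+t)$, both automatically positive for every $s,t>1$, so $m_\theta\ge 0$ cannot "rearrange to" Higman's inequality. What does give $t\le s^2$ is precisely the Krein condition $(\theta+1)(k+\theta+2r\theta)\le(k+\theta)(r+1)^2$, which with $k=s(t+1)$, $r=s-1$, $\theta=-(t+1)$ collapses to $t(s-1)(t+1)\le s^2(s-1)(t+1)$ (the other Krein condition is vacuous); alternatively one uses Higman--Cameron's variance count over triads. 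For (iv) the Krein route cannot work even in principle: the relevant Krein inequality is \emph{exactly} $t\le s^2$, with equality attained by genuine quadrangles such as $\Q^-(5,q)$, so no expansion of a Krein parameter can be "tightened" to $t\le s^2-s$ under the side hypothesis $t<s^2$. The refinement in fact follows from (ii) by elementary number theory: if $s^2-s<t<s^2$, write $t=s^2-j$ with $1\le j\le s-1$; then modulo $s+t=s^2+s-j$ one has $s^2\equiv j-s$, hence $st(s+1)(t+1)\equiv s^2(s^2-1)\equiv(s-j)(s-j+1)$, and $0<(s-j)(s-j+1)\le s(s-1)<s^2+s-j$, contradicting $s+t\mid st(s+1)(t+1)$. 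So you need to replace the justification of (iii) by the Krein (or triad-counting) argument you had earmarked for (iv), and derive (iv) from (ii) instead.
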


%
%

Given $P, Q \in \mathcal{P}$, we write $P \sim Q$ if $P$ and $Q$ are collinear, and, similarly, if $\ell, m \in \mathcal{L}$, we write $\ell \sim m$ if $\ell$ and $m$ are concurrent.  For a set of points $\Delta \subseteq \mathcal{P}$,
\[ \Delta^\perp := \{ P \in \mathcal{P} : P \sim Q \text{ for all } Q \in \Delta\}.\]
By convention, $P \in P^\perp$.

The \textit{collineation group} or \textit{automorphism group} of a generalized quadrangle is the group of all permutations of points that preserve collinearity and non-collinearity.  We now include some results from \cite{BLS} (and a related result) that are used repeatedly later in the paper.    

\begin{lem}[{\cite[Lemma 2.2]{BLS}}]
\label{lem:ratio} Let $G$ be a group of automorphisms of a generalized quadrangle $\calQ$ that is transitive on both the point set $\mathcal{P}$ and the line set $\mathcal{L}$.
Then, for $P\in\calP$ and $\ell\in\calL$, $$\frac{s+1}{t+1} = \frac{|G_\ell|}{|G_P|}.$$
\end{lem}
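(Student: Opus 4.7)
The plan is an immediate application of the orbit-stabilizer theorem together with the point-line counts from Lemma \ref{lem:GQbasics}(i). Since $G$ acts transitively on $\calP$ and on $\calL$, for any chosen $P \in \calP$ and $\ell \in \calL$ we have
\[
|G| \;=\; |\calP|\cdot|G_P| \;=\; (s+1)(st+1)\,|G_P|,
\]
and similarly
\[
|G| \;=\; |\calL|\cdot|G_\ell| \;=\; (t+1)(st+1)\,|G_\ell|.
\]

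Equating the two expressions for $|G|$ and canceling the common factor $st+1$ (which is nonzero since $s,t \ge 1$) yields $(s+1)|G_P| = (t+1)|G_\ell|$, and rearranging gives the claimed identity
\[
\frac{s+1}{t+1} \;=\; \frac{|G_\ell|}{|G_P|}.
\]

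There is essentially no obstacle: the only mild subtlety is that the lemma is stated with $s,t>1$ implicit (via Lemma \ref{lem:GQbasics}), but the argument in fact requires only that $st+1 \neq 0$, which holds whenever $\calQ$ has at least one point and one line. Thus the proof is a one-line consequence of orbit-stabilizer and the cardinality formulas, and no further ingredients are needed.
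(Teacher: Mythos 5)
Your proof is correct and is the standard argument: the paper itself only cites \cite[Lemma 2.2]{BLS} for this statement, and the proof there is exactly this orbit–stabilizer computation using $|\calP|=(s+1)(st+1)$ and $|\calL|=(t+1)(st+1)$ followed by cancellation of $st+1$. Nothing further is needed.
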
 

\begin{lem}
\label{lem:Pbounds} 
Assuming $s \le t$, the following inequalities hold:
\begin{itemize}
 \item[(i)] \cite[Lemma 2.3]{BLS} $(t+1)^2 < |\mathcal{P}| < (t+1)^3$;
 \item[(ii)] \cite[Lemma 2.3]{BLS} $s^2 (t+1) < |\mathcal{P}| < s(t+1)^2$.
\end{itemize}
Without any assumption about $s$ and $t$, the following inequality holds:
\begin{itemize}
 \item[(iii)] $|\calP| < (t+1)^5$. 
\end{itemize}
\end{lem}

\begin{proof}
 To prove (iii), we note that $s \le t^2$ by Lemma \ref{lem:GQbasics}(iii), and so
 \begin{equation*}
|\calP| = (s+1)(st+1) \le (t^2 + 1)(t^3 + 1) < (t+1)^5. \qedhere  
 \end{equation*}
\end{proof}

If $H_1, H_2, \dots, H_r$ are permutation groups on sets $\Omega_1, \Omega_2, \dots, \Omega_r$, respectively, then the \textit{product action} of the direct product $H_1 \times H_2 \times \cdots \times H_r$ on the Cartesian product $\Omega_1 \times \Omega_2 \times \cdots \times \Omega_r$ is the action $(\omega_1, \dots, \omega_r)^{(h_1, \dots, h_r)} = (\omega_1^{h_1}, \dots, \omega_r^{h_r}).$
The following result provides useful information in the event that a group of automorphisms of a generalized quadrangle has the product action on the point set.

\begin{lem}[{\cite[Theorem 1.2]{BPP2}}]
\label{lem:PAdimrest}
 Let $\Omega_1, \dots, \Omega_r$ be finite sets with $2 \le |\Omega_1| \le \cdots \le |\Omega_r|$, where $r \ge 1$, and let $H_i \le \Sym(\Omega_i)$ for each $i \in \{1, \dots, r\}$.  Assume further that $H_1$ is nontrivial and that its action on $\Omega_1$ is not semiregular.  Suppose that $N = H_1 \times H_2 \times \cdots \times H_r$ is a collineation group of a thick finite generalized quadrangle $\calQ = (\calP, \calL)$ of order nto equal to $(2,4)$, such that $\calP = \prod_{i = 1}^r \Omega_i$ and $N$ has the product action on $\calP$.  Then $r \le 4$, and every nonidentity element of $H_1$ fixes less than $|\Omega_1|^{1 - r/5}$ points of $\Omega_1$. 
\end{lem}

For a given generalized quadrangle $\mathcal{Q}$, the \textit{incidence graph} $\Gamma$ of $\mathcal{Q}$ is the graph with vertex set $\mathcal{P} \cup \mathcal{L}$ with edges between incident points and lines.  The collineation group of a generalized quadrangle can thus be identified with the automorphism group of the incidence graph.  A \textit{locally $2$-transitive generalized quadrangle} is a finite generalized quadrangle with a locally $2$-arc-transitive incidence graph.  From the definition and the above discussion of locally $2$-arc-transitive graphs, it is clear that locally $2$-transitive generalized quadrangles are equivalently those finite generalized quadrangles with a collineation group that is transitive both on ordered pairs of collinear points and on ordered pairs of concurrent lines.  We now describe the known locally $2$-transitive generalized quadrangles. 

Up to duality, apart from one example, all known locally $2$-transitive generalized quadrangles are \textit{classical generalized quadrangles}, which are the generalized quadrangles associated with classical groups of Lie type.  The following table summarizes the information about the classical generalized quadrangles.  The notation $E_q^a$ (and sometimes just $q^a$) denotes an elementary abelian group of order $q^a$, where $q$ is a prime power, and the notation $E_q^{a+b}$ denotes a special group order $q^{a+b}$ with center of order $q^a$.  The column ``$G$'' refers to the full collineation group of $\mathcal{Q}$.

\begin{table}[ht]
\caption{The classical generalized quadrangles.}\label{tbl:classical}
\begin{tabular}{l|c|c|c|l}
\toprule
$\mathcal{Q}$& Order & $G$ & $\soc(G)$ & Point stabilizer in $\soc(G)$\\
\midrule
$\W(3,q)$, $q$ odd& $(q,q)$ & $\PGammaSp(4,q)$ &$\PSp(4,q)$ & $E_q^{1+2}: ( \GL(1,q)\circ \Sp_2(q))$\\
$\W(3,q)$, $q$ even& $(q,q)$ & $\PGammaSp(4,q)$ &$\Sp(4,q)$ & $E_q^{3}:\GL(2,q)$\\
$\Q(4,q)$, $q$ odd & $(q,q)$ & $\PGammaO(5,q)$ &$\POmega(5,q)$ & $E_q^{3}:(  (\frac{(q-1)}{2}\times \Omega(3,q)).2)$\\
$\Q^-(5,q)$ & $(q,q^2)$& $\PGammaO^-(6,q)$ &$\POmega^-(6,q)$ & $E_q^4: (\frac{q-1}{|Z(\Omega^-(6,q))|}\times \Omega^-(4,q))$\\
$\herm(3,q^2)$ & $(q^2,q)$& $\PGammaU(4,q)$ &$\PSU(4,q)$ & $E_q^{1+4}:\left(\SU(2,q):\frac{q^2-1}{\gcd(q+1,4)}\right)$\\
$\herm(4,q^2)$ &$(q^2,q^3)$& $\PGammaU(5,q)$ &$\PSU(5,q)$ & $E_q^{1+6}:\left(\SU(3,q):\frac{q^2-1}{\gcd(q+1,5)}\right)$\\
$\herm(4,q^2)^D$ &$(q^3,q^2)$& $\PGammaU(5,q)$ &$\PSU(5,q)$ & $E_q^{4+4}:\GL(2,q^2)$\\
\bottomrule
\end{tabular}
\end{table}

The only other known locally $2$-transitive generalized quadrangle up to duality is the unique generalized quadrangle of order $(3,5)$. The full collineation group of this generalized quadrangle is isomorphic to $2^6{:}(3.A_6.2)$,  the
stabilizer of a point is isomorphic to $3.A_6.2$, and the stabilizer of a line is isomorphic to
$(A_5 \times A_4).2$. (See \cite[\S2.2]{BLS}).

Finally, we conclude this section with the following lemma, which appeared in an earlier version of this paper and the authors believe is interesting in its own right.

\begin{lem}
\label{lem:s=t}
Let $\mathcal{Q}$ be a finite thick generalized quadrangle with $s+1 = p^a$ and $t+1 = p^b$, where $p$ is a prime.  Then $s = t$.   
\end{lem}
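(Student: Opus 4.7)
The plan is to suppose for contradiction that $a \ne b$, say $a < b$ without loss of generality (the conclusion $s = t$ is symmetric in $s$ and $t$), and derive a contradiction from the parameter restrictions in Lemma \ref{lem:GQbasics}. First, part (iii) of that lemma, $t \le s^2$, yields $p^b - 1 \le (p^a - 1)^2 < p^{2a}$, so $b < 2a$. In particular, $d := \gcd(a, b)$ is a proper divisor of $a$ (since $a \nmid b$), whence $d \le a/2$.

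Next I would use Lemma \ref{lem:GQbasics}(ii), namely $s + t \mid st(s+1)(t+1)$. Writing $m := s + t = p^a + p^b - 2$ and letting $m_0$ denote the largest divisor of $m$ coprime to $p$, a direct expansion gives $st(s+1)(t+1) = p^{a+b}(p^{a+b} - m - 1)$, so $m \mid p^{a+b}(p^{a+b} - 1)$ and therefore $m_0 \mid p^{a+b} - 1$. The key trick is to reduce modulo $m$: since $p^a + p^b \equiv 2 \pmod{m}$,
\[
p^{a+b} - 1 \;=\; p^a \cdot p^b - 1 \;\equiv\; p^a(2 - p^a) - 1 \;=\; -(p^a - 1)^2 \pmod{m},
\]
and consequently $m_0 \mid (p^a - 1)^2$.

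To finish, I would compute $g := \gcd(m_0, p^a - 1)$. Since $m = (p^a - 1) + (p^b - 1)$, Euclid gives $\gcd(m, p^a - 1) = \gcd(p^b - 1, p^a - 1) = p^d - 1$, and this equals $g$ because $p^a - 1$ is coprime to $p$. A standard gcd-squeeze (write $m_0 = g e$ and $p^a - 1 = g f$ with $\gcd(e, f) = 1$; the divisibility $g e \mid g^2 f^2$ forces $e \mid g$) yields $m_0 \le g^2 = (p^d - 1)^2 < p^a$, the last inequality using $d \le a/2$. On the other hand, $m_0 > p^a$: for odd $p$, $m_0 = m = p^a + p^b - 2 > p^a$; for $p = 2$ thickness forces $a, b \ge 2$, so $m_0 = m/2 = 2^{a-1} + 2^{b-1} - 1 \ge 3 \cdot 2^{a-1} - 1 > 2^a$. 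This contradiction forces $a = b$.

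The main obstacle is spotting the congruence $p^{a+b} - 1 \equiv -(p^a - 1)^2 \pmod{m}$, which converts the unwieldy divisibility from Lemma \ref{lem:GQbasics}(ii) into the clean condition $m_0 \mid (p^a - 1)^2$; once that is in hand, the gcd-based squeeze is elementary, although one must be slightly careful in the $p = 2$ case about the extra factor of $2$ between $m$ and $m_0$.
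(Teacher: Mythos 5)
Your proof is correct, and it takes a genuinely different route from the one in the paper. The paper splits immediately into the cases $p$ odd and $p=2$: for odd $p$ it observes that $s+t$ is coprime to $p$, hence divides $st$ and therefore $(s-t)^2=p^{2a}(p^{b-a}-1)^2$, and then kills the $p'$-part $(p^{b-a}-1)^2$ by a size comparison using $t\le s^2$; for $p=2$ it runs a more hands-on divisibility chase with $(s+t)/2$ odd, ending in two subcases. Your argument is essentially uniform in $p$: from $s+t\mid st(s+1)(t+1)$ you extract $m_0\mid (p^a-1)^2=s^2$ (your congruence $p^{a+b}-1\equiv-(p^a-1)^2\pmod m$ is a slightly roundabout but valid way to get this; one can also note directly that $m_0\mid st$ and $m_0\mid s(s+t)$, hence $m_0\mid s^2$), and then the gcd-squeeze $m_0\le\gcd(m_0,p^a-1)^2=(p^{\gcd(a,b)}-1)^2$ combined with $\gcd(a,b)\le a/2$ (which follows from $a<b<2a$, itself a consequence of Higman's inequality) gives $m_0<p^a$, contradicting the easy lower bound $m_0>p^a$. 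The only place the parity of $p$ enters is in computing the $p'$-part $m_0$ of $s+t$. All the steps check out, including the small but necessary observations that $b<2a$ forces $a\nmid b$ and that thickness gives $a,b\ge 2$ when $p=2$; the result is arguably cleaner than the paper's treatment, at the modest cost of the less elementary identity $\gcd(p^a-1,p^b-1)=p^{\gcd(a,b)}-1$.
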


\begin{proof}
Without a loss of generality we may assume that $s \leqslant t.$  First, consider the case when $p = 2$.  By Lemma \ref{lem:GQbasics}(ii), we have that $s+t$ divides $st(s+1)(t+1)$.  Noting that $(s+t)/2 = 2^{b-1} + 2^{a-1} - 1$ and $2^{b-1} \geqslant 2^{a-1} > 1$ since $s>1$, $(s+t)/2$ is odd and hence must divide $st$.  Thus $s+t$ must divide $2st$, and since $2st = 2s(s+t) - 2s^2$, $s+t$ must divide $2s^2$ as well.  This in turn implies that $s+t$ must divide $2st - 2s^2 = 2s(t-s)$, i.e., that $2^b + 2^a - 2$ must divide $2(2^a - 1)((2^b - 1) - (2^a - 1)) = 2^{a+1}(2^a - 1)(2^{b-a} - 1)$.  Again, $(s+t)/2$ is odd, so $(s+t)/2$ must divide $(2^a - 1)(2^{b-a} - 1)$, i.e., $2^{b-1} + 2^{a-1} - 1$ must divide $2^b - 2^a - 2^{b-a} +1$.  Since both $(s+t)/2$ and $(2^a - 1)(2^{b-a}-1)$ are nonnegative and $2\cdot (s+t)/2 = 2^b + 2^a - 2 > 2^b - 2^{b-a} - 2^a + 1$, this leaves two cases: either $2^{b-1} + 2^{a-1} - 1 = 2^b - 2^{b-a} - 2^a + 1$ or $(2^a - 1)(2^{b-a} - 1) = 2^b - 2^{b-a} - 2^a + 1 = 0.$  
In the first case, we rearrange terms to see that $3\cdot2^{a-1} - 2 = 2^{b-1} - 2^{b-a}$.  Now, $b-a \leqslant a - 1$; otherwise, $b \geqslant 2a$ and $(t+1) \geqslant (s+1)^2$, which means $t \geqslant s^2 + 2s > s^2$, a contradiction to Lemma \ref{lem:GQbasics}(iii). This means that $-2 \equiv 0 \pmod {2^{b-a}}$, which implies that $2^{b-a}$ is $1$ or $2$.  If $2^{b-a}$ is $1$, we are done.  If $2^{b-a} = 2$, then $2^b = 2^{a+1}$, and we have $3\cdot2^{a-1} - 2 = 2^a - 2$, which has no integral solutions.
In the second case, we have $(2^a - 1)(2^{b-a} - 1) = 0$.  Since $s > 1$, we must have $2^{b-a} - 1 = 0$.  Therefore, $2^a = 2^b$ and $s = t$, as desired. 

We continue with the assumption that $s \leqslant t$, and we now assume that $p \geqslant 3$ is an odd prime.  Thus $s+t = p^a + p^b - 2$ is coprime to $p$, and by Lemma \ref{lem:GQbasics}(ii), $s+t$ divides $st(s+1)(t+1) = p^{a+b}(p^a - 1)(p^b - 1)$.  Hence $p^a + p ^b - 2$ divides $(p^a - 1)(p^b - 1)$, i.e., $s+t$ divides $st$.  Clearly, $s+t$ divides $(s+t)^2$, so $s+t$ divides $(s+t)^2 - 4st = (s - t)^2 = p^{2a}(p^{b-a} - 1)^2$, i.e., $p^b + p^a - 2$ divides $(p^{b-a} - 1)^2 = p^{2b - 2a} - 2p^{b-a} + 1$.  However, we know that $t \leqslant s^2$ by Lemma \ref{lem:GQbasics}(iii), so $p^b < p^{2a}$, and $(p^{b-a} - 1)^2 < p^{2b - 2a} < p^b < p^b + p^a - 2$.  Hence $p^{b-a} - 1 = 0$ and $a = b$, as desired.  
\end{proof}

\section{Reduction to quasiprimitivity} 
\label{sect:general}

Let $\mathcal{Q}$ be a finite generalized quadrangle of order $(s,t)$, where $s,t > 1$.  Let $\Gamma$ be the associated incidence graph, and suppose that there exists a subgroup $G \leqslant \Aut(\Gamma)$ such that $\Gamma$ is locally $(G,2)$-arc-transitive.  We will denote by $\mathcal{P}$ the points of $\mathcal{Q}$ and by $\mathcal{L}$ the lines of $\mathcal{Q}.$  Throughout, we will abuse notation a bit and use $\mathcal{P}$ and $\mathcal{L}$ to refer to the biparts of $\Gamma$ as well.    

\begin{lem}
 \label{lem:Brestrictions}
 Let $G$ preserve a nontrivial system of imprimitivity on $\mathcal{P}$ and let $B$ be a block of points containing a point $P$.  Then,  $B \cap P^\perp = \{P\}$ and $|B| = bs + 1$, where $b$ is the number of points in $B$ (other than $P$) that are collinear with a given point $P^\prime \in P^\perp \backslash \{P\}$.  Moreover, if $s < t$, then $|B| = st + 1$, and  if $s \ge t$, then either $|B| = st + 1$ or $t \mid s$ and $|B| = s + 1$. 
\end{lem}

\begin{proof}
 Let $P$ be a fixed point of $\mathcal{Q}$ and consider the block $B$ containing $P$.  If $Q \in B$ and $Q \neq P$, then, by the local primitivity of the incidence graph $\Gamma$, we have $Q \not\sim P$.  Since $\Gamma$ is locally $(G,2)$-arc-transitive, $G_P$ is transitive on $P^\perp \backslash \{P\}$.  Moreover, $G_P$ fixes $B$, and so each point $P^\prime \in P^\perp \backslash \{P\}$ is collinear with the same number of points (other than $P$) in $B$, say $b$.  We will count the number of pairs $(P^\prime, Q)$, where $P^\prime \in P^\perp \backslash \{P\}$, $Q \in B \backslash \{P\}$, and $P^\prime \sim Q$, in two different ways.  On the one hand, there are $b$ choices for $Q$ for each choice of $P^\prime$, so the number of pairs is
 \[ |P^\perp \backslash \{P\}| b = s(t+1)b.\]
 On the other hand, for each $Q \in B$, $Q \neq P$, $|\{P,Q\}^\perp| = t+1$ by the GQ Axiom, so the number of pairs is also $|B \backslash \{P\}|(t+1)$, and hence $|B \backslash \{P\}| = s b$; that is,
 \begin{equation}\label{eq:bsplus1} 
 |B| = bs + 1,
 \end{equation}
 as desired.

 If there are $n$ blocks, then
 \[bsn + n = n|B| = |\mathcal{P}| = (s+1)(st + 1) = s(st+t +1) + 1,\]
 and so $n \equiv 1 \pmod s$.  Let $n = cs + 1$, where $c \in \N$.  Then
 \[(cs + 1)(bs+1) = (s+1)(st + 1),\]
 and so 
 \begin{equation}\label{eq:main_equality}
 s\left(s(t-bc) + (t+1) - (b+c) \right) = 0.
 \end{equation}
 Since $s \neq 0$, we have 
 \[ s(t - bc) + (t+1) - (b+c) = 0.\]
 Suppose $b + c \ge t + 2$.  Since $b,c \in \N$, $bc \ge t + 1$.  However, this means
 \[ s(t - bc) + (t+1) - (b+c) < 0,\]
(by Equation \eqref{eq:main_equality}) a contradiction.  Hence 
\begin{equation}\label{eq:sbct}
b + c \le t + 1,\quad bc \ge t. 
 \end{equation}
Recall from above that given any $P^\prime \in P^\perp \backslash \{P\}$, there are exactly $b$ points other than $P$ in $B$ collinear with $P^\prime$. 
Let $C$ be a block of points containing points collinear with $P$.  Since $b > 0$, this means that there are exactly $b + 1$ points in $C$ collinear with $P$.  

If $b=t$, then $|B| = st + 1$ (by Equation \eqref{eq:bsplus1}) and our conclusion holds.  
So assume from now on that $b<t$.
This means that not all lines incident with $P$ are incident with a point in $C$.  Now, since $b > 0$, two of the $t + 1$ lines incident with $P$ are incident with points in $C$.  Since $G_P$ is $2$-transitive on incident lines, this means that, for every pair $\{\ell, \ell^\prime\}$ of lines incident with $P$, there are exactly $x$ blocks (other than the block containing $P$) containing a point incident with $\ell$ and a point incident with $\ell^\prime$, where $x$ is some positive integer.  Thus, there are exactly $x {{t+1} \choose 2}$ pairs of points $\{P^\prime, P^{\prime\prime}\}$ that are both incident with $P$ and contained in the same block.  On the other hand, since a block $C$ containing points collinear with $P$ contains exactly $b+1$ points collinear with $P$ and there are $t(s+1)$ points in total collinear with $P$, there are $t(s+1)/(b + 1)$ such blocks and ${{b+1} \choose 2}$ choices within each block, i.e., we have
\[ x \cdot {{t+1} \choose 2} = \frac{(t+1)s}{b+1} \cdot {{b+1} \choose 2},\]
which implies that $xt = bs$.  Hence 
\begin{equation}\label{eq:bsxt}
|B| = bs + 1 = xt + 1, 
\end{equation}
and $|B| \equiv 1 \pmod {\LCM(s,t)}$.

We now consider two cases.  Suppose first that $s < t$. (We want to show that $|B| = st + 1$.)
Now, the number of blocks of points, $n$, satisfies \[ n(xt + 1) = (s + 1)(st + 1),\] and so $n \equiv {(s+1)} \pmod t$.  Let $n = jt + (s+1)$, where $j \in \N \cup \{0\}$.  Thus,
 \[ xjt^2 + x(s+1)t + jt + (s+1) = (xt + 1)n = s^2t + st + (s + 1),\]
 and so
 \[xjt + x(s+1) + j = s^2 + s\]
 and
 \[j(bs + 1) = j(xt + 1) = (s+1)(s-x).\]
 Now, this shows that $j \equiv -x \pmod s,$ and so we let $j = sk - x$ for some $k \in \N$, i.e.,
 \[(sk - x)(bs + 1) = (s-x)(s+1).\]
 Now, if $k > 1$, then $sk - x > s - x$, a contradiction since $bs+1 < s+1$.  Hence $j = s -x$.  There are now two cases.  First, if $s - x \neq 0$, then $bs+1 = s+1$ and so $b = 1$.  However, since $t \mid bs$, we have $s = t$, a contradiction to $s<t$.  Hence it must be that $x = s$, which implies that $|B| = st + 1$, as desired.
 
Suppose now that $s \ge t$. (We want to show that either $|B| = st + 1$, or $t \mid s$ and $|B| = s + 1$.) Also suppose that $b+c < t + 1$.  Then, by Equation \eqref{eq:main_equality},
 \[ s(bc - t) = (t+1) - (b+c) > 0,\]
  and so 
 \[s \le s(bc - t) = (t+1) - (b+c) < t,\]
 a contradiction to $s \ge t$.  Thus $b+c = t+1$, which implies $bc = t$. Then, by Equation \eqref{eq:sbct}, $t+1 \ge b + t/b$, showing $\{b,c\} = \{1,t\}$. If $b = t$, then $|B| = st + 1$, and we are done.  Otherwise, $b<t$,
 we have $b=1$ and $c=t$. Thus, by Equation \eqref{eq:bsxt}, 
 $xt + 1 = bs + 1 = s + 1$ and $t \mid s$, as desired.
\end{proof}  

\begin{lem}
 \label{lem:blocks=t}
 Let $\calQ$ be a thick locally $(G,2)$-transitive generalized quadrangle.  If $s=t$ and $G$ preserves systems of imprimitivity on both $\mathcal{P}$ and $\mathcal{L}$, then, up to duality, each block of lines has size $s^2 + 1$.  
\end{lem}

\begin{proof}
Let $P$ be a point of $\mathcal{Q}$ contained in a block $B$, and assume $|B| \neq s^2+1$.  By Lemma \ref{lem:Brestrictions}, $|B| = s + 1$.  We also know from Lemma \ref{lem:Brestrictions} that 
 \[s+1 = |B| = bs + 1,\]
 where $b$ denotes the number of points of $B \backslash \{P\}$ with which a point $P^\prime \in P^\perp \backslash \{P\}$ is collinear.  In other words, each point of $P^\perp \backslash \{P\}$ is collinear with a unique point in $B \backslash \{P\}$.  Moreover, if $D$ is a block of lines, and we assume $|D| \neq s^2 + 1$, then analogous results hold for $D$ as well; in particular, $|D| = s + 1$.
 
Now, there are exactly $s^2 + 1$ distinct blocks of points.  Again, let $P$ be a fixed point of $\mathcal{Q}$.  Since 
\[ (s^2 + 1) - 1 = s^2 < s^2 + s = |P^\perp \backslash \{P\}|,\]
there is some block that contains more than one point of $P^\perp \backslash \{P\}$.  Since $G_P$ is transitive on collinear points, each block containing points collinear with $P$ contains exactly the same number of points collinear with $P$.  On the other hand, we saw above that each point in $P^\perp \backslash \{P\}$ is collinear with exactly two points in $B$; hence, every block of points (other than $B$) contains either zero or two points collinear with $P$.  

This means that there are exactly $s(s+1)/2$ blocks that nontrivially intersect $P^\perp \backslash \{P\}$ in exactly two points.  Consider one block $C$ that nontrivially intersects $P^\perp \backslash \{P\}$.  By the local primitivity of a line stabilizer (Lemma \ref{lem:loc2trans}), the two points $Q,R \in C$ collinear with $P$ are not themselves collinear.  Thus there exists a pair of distinct lines $\ell, \ell^\prime$ incident with $P$ such that $Q$ is incident with $\ell$ and $R$ is incident with $\ell^\prime$.  By Lemma \ref{lem:loc2trans}, $G_P$ is $2$-transitive on the lines incident with $P$, so there is such a block for every pair of lines.  Since there are exactly ${{s+1} \choose 2}$ distinct blocks that nontrivially intersect $P^\perp \backslash \{P\}$, the blocks that meet $P^\perp \backslash \{P\}$ are in one-to-one correspondence with pairs of lines.  Moreover, let $\ell$ be a fixed line collinear with $P$.  Then, each point of $B \backslash \{P\}$ is collinear with a unique point of $\ell$.  That is, since there are $s+1$ points incident with $\ell$ and $|B| = s+1$, there is a one-to-one correspondence between the points of $B \backslash \{P\}$ and points incident with $\ell$ other than $P$.  This implies that $G_{P\ell}$ is transitive on $B \backslash \{P\}$.  

%

Let $\mathcal{B}$ be the collection of blocks of points and $\mathcal{D}$ be the collection of blocks of lines.  Since $G_{P \ell}$ is transitive on $B \backslash \{P\}$ and $G_P$ is primitive on lines incident with $P$, if $D$ is the block containing the line $\ell$, then either each vertex of $B \backslash \{P\}$ is adjacent to exactly one vertex of $D$ or each vertex of $B$ is adjacent to no vertices in $D$.  Suppose first that each vertex of $B \backslash \{P\}$ is adjacent to no vertices of $D$.  This means that there is only a single edge between the blocks $B$ and $D$, and so, by flag-transitivity, there must be $(s+1)^2$ blocks of lines containing a line incident with a point in $B$.  However, $(s+1)^2 > |\mathcal{D}| = s^2 + 1$, a contradiction.  Hence each vertex of $B$ is adjacent to exactly one vertex of $D$.  

As noted above, given a block of points $B$ and a block of lines $D$, if a single point in $B$ is incident with a line in $D$, then each point of $B$ is incident with exactly one line of $D$.  In particular, if $\Gamma$ is the incidence graph of $\calQ$, then the induced subgraph $\Gamma[B \cup D]$ is a perfect matching.  Thus, if $\Delta$ is defined to be the quotient graph with vertex set $\mathcal{B} \cup \mathcal{D}$, where $B \sim D$ if and only if $P \sim \ell$ for some $P \in B$, $\ell \in D$, then $\Gamma$ is a regular cover of $\Delta$.

Let $\Delta_2$ be the \textit{distance 2 graph} of $\Delta$: the vertex set of $\Delta_2$ is the vertex set of $\Delta$, and two vertices are adjacent in $\Delta_2$ if and only if they are at distance two in $\Delta$.  In a generalized quadrangle, points and lines are always either at distance one or three, whereas distinct points (respectively, distinct lines) are always at distance two or four.  Hence, $\Delta_2$ contains exactly two components of size $s^2 + 1$.  Moreover, as noted above, given a block $B$ of points with $P \in B$, there are exactly ${{s+1} \choose 2}$ distinct blocks of points that nontrivially intersect $P^\perp \backslash \{P\}$, and hence $B$ (and thus every vertex) has degree exactly ${{s+1} \choose 2}$ in $\Delta_2$.  Moreover,  since the blocks that meet $P^\perp \backslash \{P\}$ are in one-to-one corresponce with pairs of lines, $\Delta_2$ is a \textit{locally triangular}, and, in particular, \textit{locally} $T(s+1)$ graph: the neighborhood of a vertex is isomorphic to the triangular graph $T(s+1)$, which itself has vertices corresponding to the ${{s+1} \choose 2}$ distinct $2$-subsets of $\{1, \dots, s+1\}$, and two vertices are adjacent if and only if the $2$-subsets intersect in exactly one element.  (See \cite{DRG} for further information about locally triangular graphs.)  

Since each component of $\Delta_2$ is locally triangular, by \cite[Proposition 4.3.9]{DRG}, each component of $\Delta_2$ is a distance $2$ graph of a bipartite \textit{rectagraph} with $c_3 = 3$: a rectagraph is a connected graph where any two neighbors have no common neighbors and any two nonneighbors have exactly two common neighbors, and that $c_3 = 3$ means for every two vertices $\alpha$ and $\beta$ at distance three, there are exactly $3$ neighbors of $\alpha$ at distance two from $\beta$.  Moreover, since $\Delta$ is a bipartite graph, every two blocks at distance two are in the same bipart, whereas two neighboring blocks are in different biparts.  This implies that, given two blocks $C_1$ and $C_2$ at distance two, there are no blocks that are both neighbors of $C_1$ and at distance two from $C_2$.  In the terminology of distance-regular graphs, this means that $a_2 = 0$ for $\Delta$.  By \cite[Lemma 4.3.5]{DRG}, any rectagraph with $c_3 = 3$ and $a_2 = 0$ has the property that any $3$-claw, i.e., induced subgraph isomorphic to $K_{1,3}$, determines a unique $3$-cube, and, by \cite[Proposition 4.3.8]{DRG}, any rectagraph with the property that any $3$-claw determines a unique $3$-cube has $2^n$ vertices for some $n \in \N$.  Hence, $\Delta$ has $2^n$ vertices for some $n \in \N$. In particular, since there is more than one block in each bipart, each bipart of $\Delta$ has $2^a$ vertices for some $a \in \N$.  However, we already know that each bipart has exactly $s^2 + 1$ vertices, so $s^2 + 1 = 2^a$ for some $a \in \N$.  We are also assuming that $s \ge 2$, so this means
\[s^2 \equiv 3 \pmod 4,\]
a contradiction.  Hence $|D| = s^2 + 1$ and so, up to duality, we may assume that each block of lines has size $s^2 + 1$.
\end{proof}

Suppose $G$ is neither quasiprimitive on points nor on lines.  By Lemma \ref{lem:biptrans}, if every nontrivial normal subgroup of $G$ is transitive on either $\calP$ or $\calL$, then $G$ acts quasiprimitively on either $\calP$ or $\calL$.  Hence, we may assume that $G$ contains a nontrivial normal subgroup $N$ that is intransitive on both $\mathcal{P}$ and $\mathcal{L}$.  

\begin{lem} \label{lem:Nrestrictions}
 If $N$ is a nontrivial normal subgroup of $G$ that is intransitive on both $\mathcal{P}$ and $\mathcal{L}$, then $|N| = st + 1$.
\end{lem}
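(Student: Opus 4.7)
The plan is to combine Lemma \ref{lem:bipintrans} with Lemma \ref{lem:Brestrictions} applied to both sides. Since $\Gamma$ is locally $(G,2)$-arc-transitive, the neighborhood action $G_\alpha^{\Gamma(\alpha)}$ is $2$-transitive and hence primitive at every vertex $\alpha$, so $\Gamma$ is $G$-locally primitive. The hypothesis on $N$ therefore places us in the setting of Lemma \ref{lem:bipintrans}, from which I get that $\Gamma$ is a cover of $\Gamma_N$, $N$ acts semiregularly on $V(\Gamma)$, and in particular every $N$-orbit on $\calP$ and on $\calL$ has size exactly $|N|$. Because $N\lhd G$ and $G$ is transitive on each of $\calP$ and $\calL$, these $N$-orbits form $G$-invariant systems of imprimitivity on $\calP$ and on $\calL$.

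Applying Lemma \ref{lem:Brestrictions} to an $N$-orbit $B\subset\calP$ gives that either $|N|=|B|=st+1$, or $s\ge t$ with $t\mid s$ and $|N|=s+1$. Applying the dual statement to an $N$-orbit of lines gives that either $|N|=st+1$, or $t\ge s$ with $s\mid t$ and $|N|=t+1$. Combining these two dichotomies under the thickness assumption $s,t\ge 2$, any possibility other than $|N|=st+1$ collapses to $s=t$ with $|N|=s+1$: the mixed cases $|N|=s+1=st+1$ and $|N|=t+1=st+1$ would force $t=1$ and $s=1$ respectively. So it remains to exclude the sub-case $s=t$ and $|N|=s+1$.

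The main obstacle is this remaining sub-case. To handle it, I would exploit the finer information from the proof of Lemma \ref{lem:Brestrictions}: when $|B|=s+1$ one has $b=1$, and the auxiliary parameter $x=s/t$ appearing in that proof equals $1$ when $s=t$. Translating this to the cover structure $\Gamma\to\Gamma_N$, where $\Gamma_N$ is a bipartite locally $(G/N,2)$-arc-transitive graph with each part of size $s^2+1$ and valency $s+1$, counting collinear pairs between two distinct $N$-orbits of points shows that any two point-orbits at distance $2$ in $\Gamma_N$ share exactly $2$ common line-orbit neighbors, and that each point-orbit is at distance $2$ from exactly $s(s+1)/2$ other point-orbits. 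Consequently the number of unordered distance-$2$ pairs in $\Gamma_N$ equals $(s^2+1)s(s+1)/4$, which must be an integer; this parity obstruction already eliminates $s\equiv 2\pmod 4$. To close out the remaining residue classes, one would combine the $2$-arc-transitivity of $G/N$ on $\Gamma_N$ with the GQ axiom applied to the perfect matching along each common line-orbit to extract a contradiction in each case, thereby forcing $|N|=st+1$.
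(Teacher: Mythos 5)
Your reduction is sound and follows the paper's own route: local $2$-arc-transitivity gives local primitivity, Lemma \ref{lem:bipintrans} gives semiregularity of $N$, and applying Lemma \ref{lem:Brestrictions} to the $N$-orbits on points and (dually) on lines correctly collapses every possibility other than $|N|=st+1$ to the single sub-case $s=t$ and $|N|=s+1$. Your description of the resulting structure of $\Gamma_N$ in that sub-case --- each point-orbit at distance $2$ from exactly $s(s+1)/2$ others, any two such orbits sharing exactly two common line-orbit neighbours --- also matches the paper's analysis (the distance-$2$ graph on point-orbits is locally the triangular graph $T(s+1)$).

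The gap is in finishing that sub-case, which is the heart of the lemma. Your parity count of unordered distance-$2$ pairs, $(s^2+1)s(s+1)/4$, does eliminate $s\equiv 2\pmod 4$, but nothing more: for $s$ odd the factor $s^2+1$ is even, and for $s\equiv 0\pmod 4$ the factor $s(s+1)/2$ is even, so the count is an integer and no contradiction arises. The closing sentence --- combining $2$-arc-transitivity of $G/N$ with ``the GQ axiom applied to the perfect matching along each common line-orbit'' --- is not an argument; no computation is indicated, and it is not clear what contradiction is supposed to emerge for, say, $s=3$ or $s=4$. The paper closes this case with genuinely nonelementary input: the locally $T(s+1)$ structure forces each component of the distance-$2$ graph to be a halved rectagraph with $c_3=3$ by \cite[Proposition 4.3.9]{DRG}, and \cite[Lemma 4.3.5, Corollary 4.3.8]{DRG} then give $s^2+1=2^a$, which fails for all $s\ge 2$ since squares are $0$ or $1$ modulo $4$. (The paper also first extracts that $N{:}G_{P,\ell}$ acts $2$-transitively on $P^N$ with $N$ regular, so $N$ is elementary abelian and $s+1$ is a prime power.) Without an appeal to such a result, or a genuine replacement for it, your proof does not go through for $s$ odd or $s\equiv 0\pmod 4$.
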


\begin{proof}
 The orbits of $N$ on $\mathcal{P}$ and $\mathcal{L}$ are both systems of imprimitivity.  By Lemma \ref{lem:bipintrans} (ii), the $N$-orbits on points and the $N$-orbits on lines each have have size $|N|$.  If $t > s$ or $s > t$, then by Lemma \ref{lem:Brestrictions} and duality $|N| = st + 1$.  On the other hand, if $s = t$, then by Lemma \ref{lem:blocks=t} $|N| = s^2 + 1$, as desired.
 \end{proof}

\begin{thm}
 \label{thm:quasi0poss}
 Let $\mathcal{Q}$ be a locally $(G,2)$-transitive generalized quadrangle.  Then $G$
 acts quasiprimitively on $\mathcal{P}$ or $\mathcal{L}$.
  \end{thm}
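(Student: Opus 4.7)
The plan is to argue by contradiction. Suppose $G$ is quasiprimitive on neither $\mathcal{P}$ nor $\mathcal{L}$. Then by \cite[Lemma~5.4]{localsarc} there is a nontrivial normal subgroup $N\lhd G$ that is intransitive on both $\mathcal{P}$ and $\mathcal{L}$. By Lemma~\ref{lem:Nrestrictions} this forces $|N|=st+1$, and by Lemma~\ref{lem:bipintrans} the subgroup $N$ is semiregular on $V(\Gamma)$, acts regularly on each of its orbits, and $\Gamma$ is a cover of $\Gamma_N$.

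Next I would extract the geometric content of this setup. Because $G$ is locally $2$-arc-transitive, $G_P$ acts transitively on $P^\perp\setminus\{P\}$, and since the block $P^N$ is $G_P$-invariant, $P^N\cap(P^\perp\setminus\{P\})$ is either empty or all of $P^\perp\setminus\{P\}$. The second alternative is excluded because $|P^\perp\setminus\{P\}|=s(t+1)>st=|P^N\setminus\{P\}|$, so no two points of $P^N$ are collinear. Combined with the cover property, every line meets $P^N$ in exactly one point; hence each $N$-orbit on $\mathcal{P}$ is an ovoid of $\mathcal{Q}$ and, dually, each $N$-orbit on $\mathcal{L}$ is a spread. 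Moreover, any two lines through $P$ in a common $N$-orbit would produce two collinear points of $P^N$, so the $t+1$ lines through $P$ lie in $t+1$ distinct line-orbits, forcing $\Gamma_N\cong K_{s+1,t+1}$.

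The remaining step, and the main obstacle, is to derive a contradiction from this rigid ovoid/spread partition. Since $\Gamma$ has girth $8$ while $\Gamma_N=K_{s+1,t+1}$ has girth $4$, every nondegenerate $4$-cycle of $\Gamma_N$ must lift to a non-closing $4$-walk in $\Gamma$, and $N$-equivariance of the lift canonically assigns a nontrivial element of $N$ to each such cycle. The $2$-transitive actions of $G_P$ on the line-orbits and of $G_\ell$ on the point-orbits then impose strong algebraic compatibilities on these elements. I would combine these compatibilities with the enumerative equation $|N|=st+1$ and with further applications of Lemma~\ref{lem:Brestrictions} to the block systems arising inside the block stabilizer $G_{P^N}$ and its action on the ovoid $P^N$, aiming to force a parameter equation with no solution when $s,t\geq 2$, in the spirit of the contradiction $s^2+1=2^a$ that appears in the proof of Lemma~\ref{lem:Nrestrictions}.
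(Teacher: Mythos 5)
Your reduction is sound as far as it goes, and it matches the paper's: $N$ intransitive on both sides forces $|N|=st+1$ (Lemma~\ref{lem:Nrestrictions}), $N$ is semiregular and $\Gamma$ covers $\Gamma_N$ (Lemma~\ref{lem:bipintrans}), and your counting correctly yields that the $N$-orbits are ovoids and spreads with $\Gamma_N\cong K_{s+1,t+1}$. But everything after that point --- which you yourself label ``the main obstacle'' --- is a programme, not a proof, and it is aimed in the wrong direction. The paper finishes by a quite different route: it first proves that $G/N$ acts \emph{faithfully} on each bipart of $\Gamma_N$ (an element fixing every point-orbit lies in $\bigcap_{i}NG_{P_i'}\subseteq NG_\ell$ for each line $\ell$, hence fixes every line-orbit as well), and then applies the classification of faithful locally $2$-arc-transitive actions on complete bipartite graphs \cite[Theorem 1.1]{localcompbip}. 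This leaves only four possibilities for $(G/N,s,t)$, namely $(A_6,5,5)$, $(\AGL(3,2),7,7)$, $(M_{12},11,11)$ and $(\PSL(4,2),7,14)$, so $|N|\in\{26,50,122,99\}$; in each case $\Aut(N)$ is solvable, forcing $G$ to be a central extension of $G/N$ by $N$, which is then ruled out. Your sketch contains neither the faithfulness step (without which no such classification can be invoked) nor any identifiable substitute for the external classification theorem.

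As for your proposed substitute --- attaching elements of $N$ to lifted $4$-cycles and hoping the $2$-transitivity constraints together with $|N|=st+1$ force an unsolvable parameter equation --- there is no reason to expect this to close. The $s^2+1=2^a$ contradiction you point to in the proof of Lemma~\ref{lem:Nrestrictions} arises only in the degenerate subcase $|N|=t+1$, $s=t$, where the quotient structure is locally triangular and the rectagraph machinery of \cite{DRG} applies; none of that is available when $|N|=st+1$. More fundamentally, the ovoid/spread partition of a generalized quadrangle is not a combinatorially impossible configuration, and the paper's eventual contradiction is group-theoretic (solvability of $\Aut(N)$ against the structure of $G/N$), not arithmetic. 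So the gap is genuine: the decisive ingredients are the faithfulness of $G/N$ on the biparts and the appeal to \cite[Theorem 1.1]{localcompbip}, and your argument supplies neither.
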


\begin{proof}
Let $\Gamma$ be the incidence graph of $\mathcal{Q}$.
Suppose $G$ does not act quasiprimitively on either $\calP$ or $\calL$.  By Lemma \ref{lem:biptrans}, $G$ has a nontrivial normal subgroup $N$ that is intransitive on both $\mathcal{P}$ and $\mathcal{L}$.

By Lemmas \ref{lem:bipintrans} and \ref{lem:Nrestrictions}, 
$\Gamma_N \cong K_{s+1, t+1}$, each $N$-orbit of points is an ovoid, and each $N$-orbit of lines is a spread. 
Let $P_1^N, \dots, P_{s+1}^N$ be the $N$-orbits of $\mathcal{P}$, which we will denote collectively as $\calP_N$, and $\ell_1^N, \dots, \ell_{t+1}^N$ be the $N$-orbits of $\mathcal{L}$, which we will denote collectively as $\calL_N$.  Let $\alpha$ be a point or a line, and let $B = \alpha^N$.  By the Frattini argument, $G_B = NG_\alpha$.

First, we note that $G/N$ is faithful on $\calP_N \cup \calL_N$.  Indeed, suppose $Ng$ fixes all blocks of points and all blocks of lines.  Then, $Ng \subseteq G_{P_1^N} = NG_{P_1}$, so there is $g_1 \in Ng$ such that $g_1$ fixes $P_1$.  Since all blocks of lines are fixed, all lines incident with $P_1$, i.e., all lines in $\Gamma(P_1)$, are fixed, and, by a connectivity argument, $g_1$ fixes all points and all lines.  Since $G$ is faithful on points and lines, $g_1 = 1$, $Ng = N$, and so $G/N$ is faithful on $\calP_N \cup \calL_N$.

We now divide into cases: either $G/N$ is unfaithful on both $\calP_N$ and $\calL_N$ (\textbf{Case 1}), $G/N$ is faithful on exactly one of $\calP_N$ or $\calL_N$ (\textbf{Case 2}), or $G/N$ is faithful on both $\calP_N$ and $\calL_N$ (\textbf{Case 3}). In any case, $G/N$ acts locally $2$-transitively on the complete bipartite graph $K_{s+1, t+1}$, which means we are in some case covered in \cite{localcompbip}. 

\textbf{Case 1.}  Suppose that $G/N$ is unfaithful on both $\calP_N$ and $\calL_N$.  By \cite[Theorem 5.3]{localcompbip}, we may assume that $G/N = (K_\calP  \times K_\calL).M$, where $K_\calP$ is the kernel of $G/N$ acting on $\calP_N$, $K_\calL$ is the kernel of $G/N$ acting on $\calL_N$, $K_\calP .M$ is $2$-transitive on $\calL_N$, and $K_\calL . M$ is $2$-transitive on $\calP_N$.  

First, if $K_\calP$ is $2$-transitive on $\calL_N$, then we claim that $G$ acts transitively on the antiflags of $\calQ$.  It suffices to show that, if $P = P_1$ is incident with $\ell = \ell_1$, then the stabilizer $G_{P\ell}$ of the flag $(P, \ell)$ is transitive on lines concurrent with $\ell$.  Since $G_P$ is transitive on collinear points, it further suffices to show that, if $P' \neq P$ is a second point incident with $\ell$, then $G_{P\ell P'}$ is transitive on lines incident with $P'$ other than $\ell$.  Indeed, if $NK_\calP$ is the preimage of $K_\calP$ in $G$, then $NK_\calP \cap G_P$ fixes $P$ and all blocks of points setwise but is still $2$-transitive on blocks of lines (since $K_\calP$ is $2$-transitive on $\calL_N$).  In particular, $NK_\calP \cap G_{P\ell P'}$ is still transitive on the blocks of lines other than $\ell^N$, and so $NK_\calP \cap G_{P\ell P'}$ is transitive on the lines incident with $P'$ other than $\ell$, and so $G$ acts transitively on the antiflags of $\calQ$.  However, the finite, thick antiflag-transitive generalized quadrangles were classified in \cite{BLS}, and none had such a system of blocks on points and lines, ruling out this case.  Moreover, unless $t+1 = 28$, by \cite[Theorem 1.1]{localcompbip}, this rules out either of $K_\calP$ or $K_\calL$ from being an almost simple group. 
 
Next, assume that each of $K_\calP .M$ and $K_\calL .M$ are affine, i.e., suppose that there exist subgroups $X_\calP \le K_\calP$ and $Y_\calL \le K_\calL$ that are each elementary abelian and regular on $\calL_N$ and $\calP_N$, respectively.  In particular, if $(P, \ell)$ is a flag, then this means that there exists $X \le G_P$ that is regular on $\Gamma(P)$, there exists $Y \le G_\ell$ that is regular on $\Gamma(\ell)$, $NX$ is regular on points, and $NY$ is regular on lines.  Moreover, since $[X,X]$ and $[Y,Y]$ each fix all blocks of points and lines, both $[X,X]$ and $[Y,Y]$ are subgroups of $N$.  Since $[X,X] \le N \cap G_P = 1$ and $[Y,Y] \le N \cap G_\ell = 1$, both $X$ and $Y$ are abelian groups.  

Let $1 \neq x \in X$, $1 \neq y \in Y$.  First, since $[y,x] = y^{-1}x^{-1}yx$ fixes each block of points and each block of lines, $[y,x] \in N$.  If $[y,x] = 1$, then $\ell^x = \ell^{yx} = \ell^{xy}$ and $P^y = P^{xy} = P^{yx}$.  Since $P^y \in \Gamma(\ell)$, $\ell^x \in \Gamma(P)$, and $P^y = P^{xy} \in \Gamma(\ell^{xy}) = \Gamma(\ell^x)$, $(P, \ell, P^y, \ell^x)$ is a $4$-cycle in $\Gamma$, meaning two distinct points lie on two distinct lines, a contradiction.  Hence no nonidentity element of $X$ commutes with any nonidentity element of $Y$.  

On the other hand, let $n$ be a nontrivial element of $N$.  Then, $(P, \ell^n)$ is an antiflag, and so, by the GQ Axiom, there is a unique path $(P, \ell', P', \ell^n)$ of length $3$ from $P$ to $\ell^n$ in $\Gamma$.  Now, $X$ is regular on $\Gamma(P)$, so $\ell' = \ell^x$ for some $x \in X$.  Moreover, if $P'$ is in the same $N$-orbit as $P^y$, where $y \in Y$, then $P' = P^{yx}$, since $\ell' = \ell^x = \ell^{yx}$.  Since $P' = P^{yx} = P^{x^{-1}yx}$, for similar reasons, we have that $\ell^n = \ell^{x^{-1}yx} = \ell^{y^{-1}x^{-1}yx} = \ell^{[y,x]}$, which implies that every nonidentity element of $N$ can be expressed as an element of $[Y,X]$.  On the other hand, $|N|-1 = st$, $|Y| - 1 = s$, and $|X| - 1 = t$, so in fact $N = \{[y,x]: y \in Y, x \in X\}$, and each nonidentity element of $N$ is uniquely expressible in the form $[y,x]$, where $y \in Y, x \in X$.

As we have seen,
\[ P^\perp = \{P^{yx} : y \in Y, x \in X \}.\]
Since $G_P$ is transitive on $P^\perp \backslash \{P\}$, $G_P$ fixes $P^N$ setwise, and every element of $P^N \backslash \{P\}$ is uniquely expressible as $P^{[y,x]}$, where $x \in X, y \in Y$, $G_P$ is transitive on $P^N \backslash \{P\}$.  This means $G_{P^N} = NG_P$ is $2$-transitive on $P^N$, and, since $G_{P^N}$ contains a regular normal subgroup $N$, we have $st + 1 = |N| = p^d$ for some prime $p$ and positive integer $d$. 

Now, since $[y,x]^{-1} = [x,y]$, we may also represent each point of $P^N\backslash \{P\}$ as $P^{[x,y]}$, and, using arguments similar to those above, $(\ell, P^y, \ell^{xy}, P^{[x,y]})$ is the unique path of length $3$ from $\ell$ to $P^{[x,y]}$ in $\Gamma$, and so there is a unique $4$-path in $\Gamma$ from $P$ to $P^{[x,y]}$ through $\ell$. Since $X$ fixes $\calP_N$ and is transitive on $\Gamma(P)$, for each line $\ell_i$ incident with $P$, there is a unique $4$-path starting with $P$ that goes through $\ell_i$ and ends at $P^{[x,y]}$; in particular, this means there is a unique point $Q_i$ on each $\ell_i$ that is collinear with $P^{[x,y]}$.  Moreover, since $X$ fixes $\calP_N$, all points collinear with both $P$ and $P^{[x,y]}$ lie in $(P^y)^N$, i.e.,
\[ \{P, P^{[x,y]} \}^\perp = \{P^{yx'}: x' \in X\}. \]
On the other hand, $x$ was arbitrary, so $|\{P, P^{[x,y]}\}^{\perp \perp}| = t+1$, and $(P, P^{[x,y]})$ is a \textit{regular pair} (see \cite[Section 1.3]{fgq}).  Now, the proof is exactly the same as the end of the proof of \cite[Theorem 4.1]{BLS}: by the dual argument, we also obtain a regular pair of nonconcurrent lines, and, by \cite[1.3.6(i)]{fgq}, $s = t$.  By \cite[1.8.5]{fgq}, since the generalized quadrangle $\calQ$ of order $s$ has a regular pair of noncollinear points and the point set $\calP$ can be partitioned into ovoids, $s$ must be odd.  Since $st + 1 = s^2 + 1 = |N| = p^d$ for some prime $p$, we have $p = 2$ and $s^2 + 1 = 2^d$.  On the other hand, if $d \ge 2$, then we have that $s^2 \equiv 3 \pmod 4$, a contradiction. If $d = 1$, then $s = 1$, a contradiction to $\calQ$ being thick.  Therefore, $K_\calP .M$ and $K_\calL .M$ cannot both be affine. 
 
Therefore, if $G/N$ is unfaithful on both $\calP_N$ and $\calL_N$, then without loss of generality, $s = 27$ and $K_\calL .M \cong \PGammaL(2,8)$ in its action on $28$ points and $K_\calL \cong \PSL(2,8)$.  Moreover, this implies that $6 \le t \le 27^2$ by Lemma \ref{lem:GQbasics}(iii), and, since $\calQ$ has an ovoid, namely $P^N$, $t \le s^2 - s = 702$ by \cite[1.8.3]{fgq}.  Also, by \cite[Theorem 1.1]{localcompbip}, $t+1 = 28$ or is a prime power.  By Lemma \ref{lem:GQbasics}(ii), $s+t$ divides $st(s+1)(t+1)$, which by direct calculation implies that $t \in \{12, 15, 27, 36, 162, 540  \}$, which in turn implies that
\[ |N| = st+1 = 27t + 1 \in \{325, 406, 730, 973, 4375, 14581 \}.\]
In each case, $N$ is solvable, i.e., there does not exist a nonabelian composition factor for these orders.  Moreover, in none of these cases does $\Aut(N)$ have a composition factor isomorphic to $\PSL(2,8)$, which implies that, if $Y := NK_\calL \cap G_\ell$, then $Y \le C_G(N)$.  Thus, for $n \in N$, $Y = Y^n \le (G_\ell)^n = G_{\ell^n}$, so $Y$ fixes every line in $\ell^N$.  Since the action of $\PSL(2,8)$ on $28$ elements is not regular, there exists a nontrivial $y \in Y$ fixing $P^N$.  However, since $y \in Y$, $y$ fixes every element of $\ell^N$, so $y$ fixes $P^N$ pointwise.  Since $y$ fixes $P^N$ pointwise and every block of lines setwise, $y$ fixes every line, a contradiction to $G$ acting faithfully. Therefore, $G/N$ must act faithfully on either $\calP_N$ or $\calL_N$.

\textbf{Case 2.}  Assume now that $G/N$ acts faithfully on exactly one of the biparts of $K_{s+1, t+1}$.  By \cite[Theorem 1.1]{localcompbip}, there are only a few possibilities for $s$ and $t$ up to duality.  If $s+1 = 3$ and $t+1 = q^3 + 1$, where $q$ is a prime power, then we have a contradiction to Lemma \ref{lem:GQbasics}(iii).  If $s+1 = r$ and $t+1 = (q^d - 1)/(q - 1)$ or $q^d$, where $q$ is a prime power, $d$ is a natural number, and $r$ divides $\gcd(d,q-1)$, then
\[ 2^{d-1} - 1 \le q^{d-1} - 1 \le \frac{q^d - q}{q-1} \le t \le s^2 = (d - 1)^2,\]
and so $d \le 5$.  However, this implies \[s = r-1 \le \gcd(d,q-1) - 1 \le d - 1 \le 4,\] and these possibilities are ruled out by inspection.  If $s+1 = r$ and $t+1 = p^f$, where $r-1$ divides $f$, then $s \le f$, then
\[2^f - 1 \le p^f - 1 = t \le s^2 \le f^2, \]
and so $s \le f \le 4$, and these possibilities are ruled out by inspection.  Finally, it is possible that $s+1$ and $t+1$ are one of the exceptional cases listed in \cite[Table 5]{localcompbip}.  In these instances, the only possibilities for $(s,t)$ that satisfy Lemma \ref{lem:GQbasics}(ii) and (iii), i.e., $s \le t \le s^2$ and $(s+t) \mid st(s+1)(t+1)$, are $(s,t) \in \{(7,7), (5, 15), (6, 15)\}.$  In these cases, we have
\[|N| = st + 1 \in \{50, 76, 91\}. \]  The full information in each case is summarized in the following table:

\begin{center}
 \begin{tabular}{lccc}
 \toprule
 $G/N$ & $s$ & $t$ & $|N|=st+1$ \\
 \midrule
 $2^3:\GL(3,2)$& $7$  & $7$ & $50$ \\
 $2^4:A_6$ & $5$ & $15$ & $76$ \\
 $2^4:A_7$ &  $6$ & $15$ & $91$\\
 \bottomrule
 \end{tabular}
\end{center}

Notice that in each case, $N$ is solvable (i.e., there cannot be a nonabelian composition factor for these orders),
and indeed, $\Aut(N)$ is solvable, too, in each of these cases. Therefore, $G/C_G(N)$ is solvable (as it
embeds naturally into $\Aut(N)$), and therefore it follows that 
$G$ is a central extension of $G/N$ by $N$. However, $N$ cannot lie in the center of 
 $G$, since otherwise, $G$ would fix each of the $N$-orbits, a contradiction.  

 \textbf{Case 3.}  Finally, we assume that $G/N$ acts faithfully on each of $\calP_N$ and $\calL_N$.  Since $G/N$ acts faithfully and locally $2$-arc-transitively on $K_{s+1, t+1}$, we have from  \cite[Theorem 1.1]{localcompbip}
 the following possibilities (see also \cite[Table 4]{localcompbip}):
\begin{center}
 \begin{tabular}{lccc}
 \toprule
 $G/N$ & $s$ & $t$ & $|N|=st+1$ \\
 \midrule
 $A_6$& $5$  & $5$ & $26$ \\
 $\AGL(3,2)$ & $7$ & $7$ & $50$ \\
 $M_{12}$ &  $11$ & $11$ & $122$\\
 $\PSL(4,2)$ & $7$ & $14$ & $99$\\
 \bottomrule
 \end{tabular}
\end{center}
These cases are ruled out like the cases above: in each case, $N$ is solvable,
and $\Aut(N)$ is solvable, too. Therefore, $G/C_G(N)$ is solvable (as it
embeds naturally into $\Aut(N)$), and therefore it follows that 
$G$ is a central extension of $G/N$ by $N$. However, $N$ cannot lie in the center of 
 $G$, since otherwise, $G$ would fix each of the $N$-orbits, a final contradiction, showing that $G$ must act quasiprimitively on either $\calP$ or $\calL$. 
\end{proof}

%
%
%
%


\section{Quasiprimitive on points but not lines}
\label{sect:single}

In this section, we classify the locally $2$-transitive generalized quadrangles with a collineation group that is quasiprimitive on points but not on lines.

\begin{hyp}
\label{hyp:GQ1quasi}
Let $\mathcal{Q}$ be a finite thick generalized quadrangle of order $(s,t)$.  Let $\Gamma$ be the associated incidence graph, and suppose that there exists a subgroup $G \le \Aut(\Gamma)$ such that $\Gamma$ is locally $(G,2)$-arc-transitive.  We will denote by $\mathcal{P}$ the points of $\mathcal{Q}$ and by $\mathcal{L}$ the lines of $\mathcal{Q}.$  Throughout, we will abuse notation a bit and use $\mathcal{P}$ and $\mathcal{L}$ to refer to the the biparts of $\Gamma$ as well.  Finally, assume that $G$ acts quasiprimitively on $\mathcal{P}$ but not on $\mathcal{L}$.
\end{hyp}

\begin{lem}
\label{lem:t+1orbs}
 Assume Hypothesis \ref{hyp:GQ1quasi}.  There exists a nontrivial normal subgroup $N$ of $G$ such that $N$ is transitive on $\calP$ and intransitive on $\calL$, and, moreover, any such normal subgroup $N$ has exactly $t+1$ orbits on $\calL$. 
\end{lem}

\begin{proof}
 Since $G$ acts quasiprimitively on $\calP$ but not on $\calL$, there exists a normal subgroup $N$ that is transitive on $\calP$ but not on $\calL$.  
 
 Let $P$ be a point in $\calP$.  By Lemma \ref{lem:loc2trans}, the stabilizer $G_P$ of $P$ is $2$-transitive on the lines incident with $P$, so $G_P$ is primitive on incident lines; in particular, the $N$-orbits of lines are a $G$-invariant system of imprimitivity on $\calL$, so this implies that either all lines incident with $P$ are in the same $N$-orbit of lines, or all lines incident with $P$ are in distinct $N$-orbits.  If all lines incident with $P$ are in the same $N$-orbit, say $\ell^N$, then, by the transitivity of $N$ on $\calP$, every line in $\calL$ is is $\ell^N$, i.e., $\ell^N = \calL$, which is a contradiction to the intransitivity of $N$ on $\calL$.  Hence, the lines incident with $P$ are in $t+1$ distinct $N$-orbits, and, by the transitivity of $N$ on $\calP$, $N$ has exactly $t+1$ orbits on $\calL$.   
\end{proof}

\begin{lem}
 \label{lem:HAon1}
 Assume Hypothesis \ref{hyp:GQ1quasi}.  If $G$ acts with type $\HA$ on $\mathcal{P}$, then $\mathcal{Q}$ is the unique generalized quadrangle of order $(3,5)$.
\end{lem}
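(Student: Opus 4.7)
The plan is as follows. Because $G$ acts on $\mathcal{P}$ with type $\HA$, there is a unique minimal normal subgroup $N$ of $G$ with $N \cong C_p^d$ elementary abelian and acting regularly on $\mathcal{P}$. Hence $(s+1)(st+1) = |\mathcal{P}| = p^d$, and in particular both $s+1$ and $st+1$ are powers of $p$; I write $s+1 = p^a$ and $st+1 = p^b$, so $d = a+b$. Since $N$ is the unique minimal normal subgroup of $G$ and $G$ is not quasiprimitive on $\mathcal{L}$, $N$ must be intransitive on $\mathcal{L}$, so the $N$-orbits on lines form a nontrivial $G$-invariant partition of $\mathcal{L}$.

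I would first pin down $|\ell^N|$. Applying the line-side dual of Lemma \ref{lem:Brestrictions} gives $|\ell^N| = st+1$, or else $s \ge t$ with $s \mid t$ and $|\ell^N| = t+1$. In the latter case $t+1 \mid |N| = p^d$ forces $t+1 = p^e$ for some $e \ge 1$, and rearranging $st = (p^a-1)(p^e-1) = p^b - 1$ yields $p^{a+e} - p^a - p^e + 2 = p^b$; reducing modulo $p^{\min(a,e)}$ forces $p^{\min(a,e)} \mid 2$, hence $s = 1$ or $t = 1$, contrary to thickness. So $|\ell^N| = st+1$ and $|N_\ell| = s+1$, and because $N_P = 1$ for each $P$ on $\ell$, $N_\ell$ acts regularly on the $s+1$ points of $\ell$.

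Identifying $\mathcal{P}$ with $N$ via the regular action, with $P \leftrightarrow 0$, the set of points on any line $\ell$ through $P$ coincides with the subgroup $N_\ell$ of $N$, which is therefore an $a$-dimensional $\mathbb{F}_p$-subspace of $N \cong \mathbb{F}_p^{a+b}$. The \textsc{GQ Axiom} forces any two such subspaces to meet only at $0$, so I obtain a partial spread $\{X_0, \dots, X_t\}$ of $t+1$ subspaces of dimension $a$ in $\mathbb{F}_p^{a+b}$, with every line of $\mathcal{Q}$ an additive translate of some $X_i$. The stabilizer $G_P \cong G/N$ embeds into $\GL(d,p)$ as an irreducible subgroup (by definition of $\HA$), permutes $\{X_0, \dots, X_t\}$ 2-transitively (because $G_P^{\Gamma(P)}$ is 2-transitive on the $t+1$ lines through $P$), and for each such $\ell$ the induced action of $G_{P,\ell}$ on $X_\ell$ is linear and transitive on $X_\ell \setminus \{0\}$ (because $G_\ell^{\Gamma(\ell)}$ is 2-transitive on the $s+1$ points of $\ell$).

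From here the argument becomes a case analysis driven by two CFSG-based classifications: Hering's classification of transitive linear groups constrains the action of $G_{P,\ell}$ on $X_\ell \cong \mathbb{F}_p^a$, and the classification of finite 2-transitive permutation groups constrains the action of $G_P$ on $\{X_0, \ldots, X_t\}$. Combining these with the arithmetic identity $(s+1)(st+1) = p^d$ (which, since $p^a - 1 \mid p^b - 1$, forces $b = am$ and $t = 1 + p^a + \cdots + p^{a(m-1)}$ for some $m \ge 2$) and with the irreducibility of $G_P$ on $\mathbb{F}_p^{a+b}$ ought to slash the candidate list drastically. The principal obstacle will be running this case-by-case elimination and checking in each surviving configuration that the partial spread, together with its $N$-translates, actually assembles into a generalized quadrangle; I expect only $p = 2$, $a = 2$, $m = 2$ (so $d=6$, $s=3$, $t=5$) to survive, producing the unique generalized quadrangle of order $(3,5)$ with $G = 2^6{:}(3.A_6.2)$ and $G_P = 3.A_6.2$ acting irreducibly on $\mathbb{F}_2^6$ and 2-transitively on the six 2-spaces $X_0, \ldots, X_5$.
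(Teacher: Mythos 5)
Your setup is sound and is the standard translation-GQ reduction: identifying $\mathcal{P}$ with the regular elementary abelian $N$, showing each $N$-orbit of lines is a spread of size $st+1$, and recovering the $t+1$ lines through $P$ as a partial spread of $a$-dimensional subspaces of $\mathbb{F}_p^{a+b}$ permuted $2$-transitively by the irreducible group $G_P\le\GL(d,p)$. (One small slip: the line-side dual of Lemma \ref{lem:Brestrictions} gives the exceptional case when $t\ge s$ and $s\mid t$, not ``$s\ge t$ with $s\mid t$''; this does not affect your elimination of that case.) However, the proof has a genuine gap: everything after ``From here the argument becomes a case analysis'' is a plan, not a proof. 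The entire content of the lemma lives in that case analysis, and you have not carried it out; you only state that you \emph{expect} $p=2$, $a=2$, $m=2$ to be the sole survivor. That expectation is in fact false at the level of the constraints you have derived: the generalized quadrangle of order $(15,17)$ arising from the Lunelli--Sce hyperoval is a point-primitive translation GQ with $p=2$, $a=4$, $b=8$, $m=2$, so it satisfies the arithmetic identity $(s+1)(st+1)=p^d$, the divisibility $p^a-1\mid p^b-1$, the partial-spread structure, and irreducibility of the point stabilizer. Killing it requires a genuinely group-theoretic argument about which groups can act locally $2$-transitively on its incidence graph, and your sketch supplies none.

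The paper takes a different and much shorter route: since HA-type quasiprimitive groups are automatically primitive, it invokes the existing classification of thick point-primitive HA-type, line-transitive generalized quadrangles (Bamberg--Glasby--Popiel--Praeger, \cite[Corollary 1.5]{GQtranshyp}), which leaves exactly the two candidates $(3,5)$ and the Lunelli--Sce $(15,17)$, and then eliminates the latter by noting that local $2$-transitivity forces $16\cdot 15$ to divide the order of the group, whereas the full collineation group of that quadrangle has order coprime to $5$. If you want to complete your from-scratch approach, you would essentially be reproving that classification via Hering's theorem and the list of finite $2$-transitive groups, and you would still need a dedicated argument of the above kind to dispose of the $(15,17)$ configuration; as written, the proposal does not establish the lemma.
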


\begin{proof}
 Since $\Gamma$ is locally $(G,2)$-arc-transitive, $G$ is transitive on lines.  Every group of type $\HA$ is primitive (see \cite[Section 5]{FinQuasiprimGraphs}), and the finite thick generalized quadrangles admitting an automorphism group that is point-primitive with type $\HA$ and line-transitive were classified in \cite[Corollary 1.5]{GQtranshyp}: namely, they are the unique generalized quadrangle of order $(3,5)$ or the generalized quadrangle of order $(15,17)$ arising from the Lunelli-Sce hyperoval.  The incidence graph $\Gamma$ arising from the unique generalized quadrangle of order $(3,5)$ is in fact locally 3-arc-transitive \cite[Theorem 1.1]{BLS}.  On the other hand, since there are $16$ points incident with each line in the generalized quadrangle of order $(15,17)$ arising from the Lunelli-Sce hyperoval, a group acting locally 2-arc-transitively on the incidence graph is necessarily divisible by $16 \cdot 15$, whereas the order of the colineation group of this generalized quadrangle is not divisible by $5$ \cite{BichMazzSomma, BrownCherowitzo}.  The result follows. 
\end{proof}

\begin{lem}
 \label{lem:notHSon1}
 Assume Hypothesis \ref{hyp:GQ1quasi}.  Then, $G$ cannot act on $\mathcal{P}$ with type $\HS$.
\end{lem}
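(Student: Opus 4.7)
The plan is to assume $G$ acts on $\calP$ with type $\HS$ and derive a contradiction via CFSG. Write $\soc(G)=M_1\times M_2$ with $M_1\cong M_2\cong T$ two minimal normal subgroups of $G$ each acting regularly on $\calP$, where $T$ is a finite nonabelian simple group; in particular $|T|=|\calP|=(s+1)(st+1)$. Every nontrivial normal subgroup of $G$ contains $M_1$ or $M_2$, so the failure of quasiprimitivity on $\calL$ forces one of them---say $M_1$---to be intransitive on $\calL$.

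The first step is to pin down $(M_1)_\ell$ and the socle of $G_P$. Applying Lemma~\ref{lem:Brestrictions} to the $G$-invariant partition of $\calL$ given by the $M_1$-orbits, and using that regularity of $M_1$ on $\calP$ forces $(M_1)_\ell$ to act semiregularly on the $s+1$ points of $\ell$ (which rules out the alternative block size since $s\ge 2$), the only admissible block size is $st+1$. Hence $M_1$ partitions $\calL$ into $t+1$ spreads of size $st+1$ and $(M_1)_\ell$ has order $s+1$ and is regular on the points of $\ell$; being a regular normal subgroup of the $2$-transitive group $G_\ell^{\Gamma(\ell)}$, it is elementary abelian of order $s+1=p^a$. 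On the point side, the stabilizer $(M_1\times M_2)_P$ is the diagonal copy of $T$ inside $M_1\times M_2$, which is a simple normal subgroup of $G_P\le\Aut(T)$, so $\soc(G_P)\cong T$. This socle cannot lie in the kernel of $G_P$'s action on $\Gamma(P)$: otherwise, each line through $P$ (which identifies with a proper nontrivial subgroup of $T$ under the regular $M_1$-action) would be normalized by every element of $T$, contradicting simplicity of $T$. Therefore $G_P^{\Gamma(P)}$ is an almost simple $2$-transitive group of degree $t+1$ with socle isomorphic to $T$.

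The final step is a CFSG calculation. By the classification of finite $2$-transitive almost simple groups, the pair (socle $T$, degree $d=t+1$) is restricted to the standard list: socles $A_n$, $\PSL_d(q)$, $\PSU_3(q)$, $\Sz(q)$, $\Ree(q)$, $\Sp_{2d}(2)$, the Mathieu groups, and the sporadic exceptions $\PSL_2(11)$, $A_7$, $\HS$, $\Co_3$, each with a prescribed set of $2$-transitive degrees. For each admissible pair $(T,d)$ the identity $|T|=(s+1)(st+1)=(s+1)(s(d-1)+1)$ is a quadratic in $s$, and one needs an integer solution with $s\ge 2$. A family-by-family check---polynomial inequalities for each infinite family (where the discriminant never becomes a square for $s$ in the admissible range) and a finite arithmetic verification for the sporadic cases---shows that no such solution exists, giving the desired contradiction. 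I expect the main obstacle to be this final CFSG bookkeeping: one must correctly enumerate every family together with every exceptional $2$-transitive degree (for instance, the isolated actions of $A_n$ at small $n$, the subspace and hyperplane actions of $\PSL_d(q)$ for $d\ge 3$, and the two degree-options for $\Sp_{2d}(2)$) and verify that no parameter pair slips through.
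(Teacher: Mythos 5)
Your structural reduction is correct and takes a genuinely different route from the paper, which simply observes that a quasiprimitive group of type $\HS$ is also primitive and then quotes the classification of point-primitive, line-transitive generalized quadrangles of holomorph type \cite[Theorem 1.1]{BambergPopielPraeger}. Everything up to your last paragraph checks out: since $G$ is not quasiprimitive on $\calL$, one of the two regular minimal normal subgroups, say $M_1$, is intransitive on $\calL$; its line-orbits must have size $st+1$ (the alternative size $t+1$ from the dual of Lemma~\ref{lem:Brestrictions} is indeed excluded because $(M_1)_\ell$ acts semiregularly on the $s+1$ points of $\ell$, forcing $|(M_1)_\ell|\le s+1$ while $(s+1)(st+1)/(t+1)>s+1$); hence $(M_1)_\ell$ is regular on the points of $\ell$ and elementary abelian of order $s+1$, the lines through $P$ are identified with subgroups of $T$ of order $s+1$, and simplicity of $T$ then forces $\soc(G_P^{\Gamma(P)})\cong T$, so $T$ is an almost simple $2$-transitive socle of degree $t+1$ satisfying $|T|=(s+1)(st+1)$. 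This is a clean, self-contained alternative to the paper's citation.

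However, your final step is a genuine gap: you assert that ``a family-by-family check shows that no such solution exists'' without carrying out any of it, and that check is the mathematical substance of the argument rather than bookkeeping. For the infinite families it is not a finite verification. For instance, for $T=\PSL(2,q)$ with $q$ odd in its natural degree-$(q+1)$ action, the equation $(s+1)(sq+1)=q(q^2-1)/2$ has an integer root precisely when $2q^4-q^2-2q+1$ is a perfect square, and ruling this out for every prime power $q$ needs an actual argument; analogous Pell-type or divisibility arguments are required for the subspace actions of $\PSL_n(q)$, for $\PSU(3,q)$, $\Sz(q)$, $\Ree(q)$, and for both degree options of $\Sp(2m,2)$. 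You also do not bring to bear the extra constraints you have already established --- that $s+1$ is a prime power, that $(t+1)t$ divides $|T|$, and the restrictions of Lemma~\ref{lem:GQbasics}(ii)--(iv) --- which are exactly what would make such a sieve tractable. As written, the lemma has been reduced to an unproved Diophantine assertion; you need either to execute the case analysis in full or to fall back on the citation the paper uses.
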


\begin{proof}
 A group $G$ that is quasiprimitive with type $\HS$ is also primitive, and so the result follows from \cite[Theorem 1.1]{BambergPopielPraeger}.
\end{proof}

\begin{lem}
 \label{lem:notASon1}
 Assume Hypothesis \ref{hyp:GQ1quasi}.  Then, $G$ cannot act on $\mathcal{P}$ with type $\AS$.
\end{lem}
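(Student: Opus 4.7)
The plan is as follows. Assume for contradiction that $G$ has type $\AS$ on $\calP$, and let $T = \soc(G)$ be the nonabelian simple socle. By quasiprimitivity, $T$ is transitive on $\calP$. Since $G$ is not quasiprimitive on $\calL$ and $T$ is the unique minimal normal subgroup of $G$, every nontrivial normal subgroup of $G$ contains $T$; in particular $T$ itself must be intransitive on $\calL$.

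The $T$-orbits on $\calL$ form a $G$-invariant system of imprimitivity, so by the dual of Lemma \ref{lem:Brestrictions} each orbit has size $st+1$ or (with $s \mid t$) $t+1$. The latter is eliminated by a short incidence count: $T$-transitivity on $\calP$ forces the number of lines of any $T$-orbit $\mathcal{O}$ through a fixed point to be the constant $|\mathcal{O}|(s+1)/|\calP|$, which equals $(t+1)/(st+1) < 1$ whenever $|\mathcal{O}| = t+1$ and $s \ge 2$. Hence $T$ has exactly $t+1$ orbits on $\calL$, each of size $st+1$, and the same count shows each orbit is a spread of $\calQ$. Because a spread meets each point in a unique line and $T_P$ preserves each $T$-orbit setwise, $T_P$ fixes every line through $P$; that is, $T_P \le \stabkernel{G}{P}$.

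Local 2-transitivity now makes $G_P^{\Gamma(P)} = G_P/\stabkernel{G}{P}$ a 2-transitive permutation group of degree $t+1 \ge 3$, so $|G_P^{\Gamma(P)}| \ge t(t+1)$. Since $T_P \le \stabkernel{G}{P}$, this 2-transitive group is a quotient of $G_P/T_P \cong G_P T/T \le G/T \hookrightarrow \Out(T)$. Consequently $\Out(T)$ admits a 2-transitive section of degree $t+1$, and in particular $|\Out(T)| \ge 6$.

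The remainder is a CFSG-based case check. Alternating and sporadic $T$ have $|\Out(T)| \le 4$ and are eliminated immediately. For $T$ of Lie type with cyclic $\Out(T)$ (such as $\PSL(2,q)$), every 2-transitive section of an abelian group has degree at most $2$, contradicting $t \ge 2$. For the remaining Lie-type groups the explicit description $\Out(T) = (C_d \times C_f).C_g$ shows that an $S_3$ section is essentially the only way to realise a 2-transitive section of degree $\ge 3$, and only of degree $3$; this narrows the possibilities to $T = \POmega^+(8,q)$ (triality) or $T = \PSL(n,q)$ with $3 \mid \gcd(n, q-1)$, in either case forcing $t = 2$ and hence $s \le t^2 = 4$ by Lemma \ref{lem:GQbasics}. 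In these remaining cases I would combine the divisibility condition $(s+1)(st+1) \mid |T|$ with Lemma \ref{comp-factor} (which forces each composition factor of $T_P$ to appear in a 2-transitive group of degree $3$ or in a transitive group of degree $s \le 4$ or $t = 2$) to produce a short list of putative point stabilizers, each of which is ruled out by direct inspection of the maximal subgroups of $T$. The main obstacle is the triality case $T = \POmega^+(8,q)$, where the richness of $\Out(T)$ and the abundance of parabolic-type maximal subgroups require the most care.
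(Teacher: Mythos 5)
Your first two paragraphs are sound and are essentially a self-contained derivation of what the paper obtains by citing the classification of locally $s$-arc-transitive graphs with a star normal quotient (\cite[Theorem 1.3]{localstar}): the $T$-orbits on $\calL$ are $t+1$ spreads, $T_P\le\stabkernel{G}{P}$, and hence $G_P^{\Gamma(P)}$ is a $2$-transitive section of $\Out(T)$ of degree $t+1$. The counting argument eliminating orbits of size $t+1$ is correct.

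The gap is in the final case analysis, and it is a genuine one. Your claim that ``an $S_3$ section is essentially the only way to realise a $2$-transitive section of degree $\ge 3$, and only of degree $3$'' is false. What the solvability of $\Out(T)$ and the cyclicity of its derived subgroup actually give (for $\PSL(n,q)$ and $\PSU(n,q)$) is that the section is sharply $2$-transitive of \emph{prime} degree $r$, i.e.\ $C_r{:}C_{r-1}$ with $r$ an odd prime dividing $\gcd(n,q\mp1)$ --- and $r\ge 5$ genuinely occurs: for example $\Out(\PSL(5,16))\ge C_5{:}C_4$ with the field automorphism of order $4$ acting faithfully on the diagonal subgroup $C_5$, a sharply $2$-transitive group of degree $5$. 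Likewise $\Out(\POmega^+(8,q))$ has an $S_4$-section for $q$ odd, so $t+1=4$ is not excluded by your argument. Consequently your conclusion ``$t=2$'' is unjustified, and the cases $t+1=r\ge 5$ for $\PSL_n(q)$ and $\PSU_n(q)$ --- which is precisely where the paper's proof does almost all of its work, using lower bounds on the minimal permutation degree of $T$ together with the Higman inequality $s\le t^2$ to force contradictions --- are silently discarded. Your candidate list is also incomplete even for degree $3$: it omits $\PSU(n,q)$ with $3\mid\gcd(n,q+1)$, as well as $E_6(q)$ and ${}^2E_6(q)$, all of which admit $S_3$-sections of $\Out(T)$ and appear in the classification the paper cites (they are dispatched there because all generalized quadrangles with $t\in\{2,3\}$ are known). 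To repair the proof you would need to restore the full list of pairs $(T,t+1)$ and then supply the permutation-degree estimates that rule out $t+1=r\ge 5$.
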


\begin{proof}
 Since $G$ is almost simple, there is a finite simple group $T$ such that $T \le G \le \Aut(T)$.  Since $T$ is transitive on $\calP$ but not on $\calL$, by Lemma \ref{lem:t+1orbs}, $T$ must have exactly $t+1$ orbits on $\calL$.
 By \cite[Theorem 1.3]{localstar}, the possibilities for $\{T, t+1\}$ are: $\{\PSL_n(q), t+1\}$, where $n \ge 3$ and $t+1$ is an odd prime dividing $\gcd(n,q-1)$; $\{\PSU_n(q), t+1\}$, where $n \ge 3$ and $t+1$ is an odd prime dividing $\gcd(n, q+1)$; $\{\POmega^+_8(q), t+1\}$, where $t+1$ is either $3$ or $4$; $\{E_6(q), 3\}$; or $\{ {}^2E_6(q), 3\}$.  Since all generalized quadrangles with $t \in \{2,3\}$ are known \cite[\S6.1,\S6.2]{fgq}, and none of them admits $ \POmega^+_8(q)$, $E_6(q)$, or ${}^2E_6(q)$ for any $q$, these cases are immediately ruled out.
 
 Suppose that $T = \PSL_n(q)$.  We also observe that $t+1$ divides $\gcd(n,q-1)$.  If $n \ge 6$, then the smallest permutation representation of $T$ is on at least $(q^6 - 1)/(q - 1) = q^5 + q^4 + q^3 + q^2 + q + 1$ points, so 
 \[(s+1)(st+1) = |\mathcal{P}| \ge q^5 + q^4 +q^3 + q^2 + q + 1.\]
 This implies that $s \ge q$, and, since $t < q-1$, we have
 $s^2q > q^5$, which implies $s > q^2$.  However, this means $\sqrt{s} > q > t$, a contradiction to Lemma \ref{lem:GQbasics} (iii).  This means $n \in \{3,4,5\}$.  Since $t+1$ is an odd prime that divides $\gcd(n,q-1)$, we have $t=2$ or $t = 4$.  We saw above that $t = 2$ is impossible in this case, and, if $t = 4$, then $s \le 16$, which means $|\mathcal{P}| \le (16 + 1)(64 + 1) = 1105$.  On the other hand, $|\mathcal{P}| \ge q^4 + q^3 + q^2 + q + 1 > q^4$, and so $q^4 < 1105$, i.e., $q \le 5$.  However, $t+1 = 5$ and must divide $q - 1 \le 4$, a contradiction. 
 
 Finally, suppose $T = \PSU_n(q)$.  The proof proceeds similarly: we observe that $t+1$ divides $\gcd(n,q+1)$.  Note that $q = 2$ implies $t = 2$, and there are no examples in this case.  If $n \ge 5$, then the smallest permutation representation of $T$ is on at least $(q^5+1)(q^2+1)$ points, so
 \[(s+1)(st+1) = |\mathcal{P}| \ge (q^5 + 1)(q^2 + 1),\] which as above implies $\sqrt{s} > t$, a contradiction.  Hence $n = 3$ or $n = 4$, and, since $k$ is an odd prime dividing $n$, we conclude that $t+1 = n = 3$.  However, this means $t = 2$, a final contradiction.  Therefore, there are no such generalized quadrangles.
\end{proof}

Before proceeding, we prove the following lemma, which is useful in the remaining cases when $\soc(G) \cong T^k$, where $T$ is a nonabelian finite simple group. 

\begin{lem}
\label{lem:lognk}
 Let $X \le \Sym(k)$ and suppose $X$ acts $2$-transitively on $\Omega$, where $|\Omega| = n > k$.  Then, $\log_2 n < k$.  In particular, if $X^\Omega$ is affine, then $\log_2 n \le k/2$, and, if $X^\Omega$ is almost simple, then $n < 2k$ unless $n = 28$, $k = 9$, and $X^\Omega \cong \PGammaL(2,8)$. 
\end{lem}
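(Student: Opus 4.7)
The approach is to apply the classification of finite $2$-transitive permutation groups (a consequence of the CFSG): the image $X^\Omega$ is either of \emph{affine type}, with a regular elementary abelian normal subgroup $V\cong C_p^d$ and $n=p^d$, or is \emph{almost simple}, with nonabelian simple socle $T=\soc(X^\Omega)$. I would prove the three assertions of the lemma by treating these two cases in turn and combining them for the general bound $\log_2 n<k$.

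For the affine case, I would first extract from $X$ a genuine elementary abelian $p$-subgroup of rank $d$, by lifting the regular normal $V\cong C_p^d$ of $X^\Omega$ through the quotient map $X\to X^\Omega$ (using Schur--Zassenhaus on the $p$-part when the kernel of the action on $\Omega$ has order coprime to $p$, and direct $p$-group arguments otherwise). Johnson's theorem, that the minimal faithful permutation degree of $C_p^d$ is $dp$, combined with $X\le\Sym(k)$, then forces $dp\le k$, so $d\le k/p$ and
\[ \log_2 n \;=\; d\log_2 p \;\le\; \frac{k\log_2 p}{p}. \]
Since $(\log_2 p)/p\le 1/2$ when $p=2$, this gives $\log_2 n\le k/2$ immediately in that case; for odd $p$ the hypothesis $n>k$ together with $n=p^d$ and $k\ge dp$ restricts the pairs $(p,d)$ enough that the bound $\log_2 n\le k/2$ may be verified subcase-by-subcase.

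For the almost simple case, I would enumerate the possibilities for $T=\soc(X^\Omega)$ from the classification of $2$-transitive almost simple groups: the alternating groups $A_m$, the groups $\PSL_d(q)$ (acting on projective points or hyperplanes), $\PSU_3(q)$, $\Sz(q)$, $\Ree(q)$, $\PSp_{2m}(2)$, and a short list of sporadic examples. For nearly every such $T$, the minimum faithful permutation degree of $X^\Omega$ equals its $2$-transitive degree $n$ itself, and combined with $X\le\Sym(k)$ this contradicts the hypothesis $n>k$. The surviving cases---those in which $X^\Omega$ admits a faithful permutation representation of degree strictly less than $n$---form a short explicit list (essentially the small-parameter $\PSL_2(q)$ and their almost simple extensions), and a direct inspection yields $n<2k$ except for $X^\Omega\cong\PGammaL(2,8)$, whose natural action on $9$ points is faithful and which also acts $2$-transitively on the $28$ points of the Ree unital.

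The main obstacle is the almost simple case analysis: identifying precisely which $2$-transitive almost simple groups admit a faithful permutation representation of strictly smaller degree than their $2$-transitive action, and then verifying that $n<2k$ in all such cases save the single exception $\PGammaL(2,8)$. A secondary technical point, in the affine case, is that one must produce a bona fide elementary abelian subgroup of $X$ rather than merely a subquotient, since Johnson's bound on the minimal faithful permutation degree applies to subgroups; once this is carried out, the inequality $\log_2 n\le k/2<k$ in the affine case, combined with $n<2k$ (hence $\log_2 n<1+\log_2 k<k$ for $k\ge 2$) in the almost simple case, delivers the general bound $\log_2 n<k$.
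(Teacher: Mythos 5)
Your proposal follows essentially the same route as the paper's proof: split according to the classification of finite $2$-transitive groups; in the affine case extract an elementary abelian $p$-group of rank $d$ and use that its minimal faithful permutation degree is $pd$, forcing $pd\le k$; in the almost simple case compare the $2$-transitive degree $n$ with the minimal faithful degree of the socle via the finite list in \cite[Table 7.4]{CameronPerm} and \cite[Table 5.2A]{KleidmanLiebeck}, with $\PGammaL(2,8)$ acting on the $28$ points of the Ree unital as the unique exception. Your extra care about lifting the regular normal subgroup of $X^\Omega$ to an honest subgroup of $X$ is warranted (the paper simply asserts that $X$ itself contains $f$ commuting generating elements of order $p$); just note that when $p$ divides the kernel, a Sylow $p$-subgroup of the preimage is only guaranteed to \emph{surject} onto $C_p^d$, and a $p$-group surjecting onto $C_p^d$ need not contain one (compare $Q_8$ and $C_2\times C_2$), so what you actually want is a lower bound on the minimal faithful degree of a $p$-group in terms of its minimal number of generators, not literally a subgroup.

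The one step I would not sign off on as written is the odd-prime affine subcase. For $p=3$ one has $(\log_2 p)/p\approx 0.528>1/2$, so $k\ge pd$ only yields $\log_2 n\le 0.528\,k$, and closing the gap to $k/2$ means excluding every $k$ with $3d\le k<2d\log_2 3$ for \emph{every} $d$ --- an infinite family, not a finite list of subcases. Extra input is needed, e.g.\ that the point stabilizer of $X^\Omega$ is transitive on the $3^d-1$ nonzero vectors, combined with Hering's theorem and bounds on element orders or composition factors of subgroups of $\Sym(k)$. To be fair, the paper's own argument has exactly the same blemish: it asserts $pf\ge f\cdot(2\log_2 p)$, which fails at $p=3$ since $3<2\log_2 3$. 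So your proposal is no weaker than the published proof, but the phrase ``verified subcase-by-subcase'' conceals the only genuinely delicate point in the lemma.
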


\begin{proof}
 Let $|\Omega| = n$, and assume $X^\Omega$ is $2$-transitive, where $X \le \Sym(k)$ for some $k < n$.  If the action of $X^\Omega$ is affine, then $n = p^f$, where $p$ is a prime.  Thus there are $f$ generating elements in $X$ of order $p$ that commute, and so 
 \[k \ge pf \ge f \cdot (2\log_2 p) = 2\log_2 n.\]  If $X$ is not affine, then $X^\Omega$ is one of a finite list of groups; see \cite[Table 7.4]{CameronPerm}.  Unless $n = 28$ and $k = 9$, $n < 2k$ by \cite[Table 5.2A]{KleidmanLiebeck}, and so $\log_2 n < n/2 < k$.  Finally, when $n = 28$, $\log_2 28 < 5 < 9$, and the result holds in any case.
\end{proof}

\begin{lem}
 \label{lem:notTWon1}
 Assume Hypothesis \ref{hyp:GQ1quasi}.  Then, $G$ cannot act on $\mathcal{P}$ with type $\TW$.
\end{lem}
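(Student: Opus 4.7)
Suppose for contradiction that $G$ has type $\TW$ on $\mathcal{P}$. Then $N := \soc(G) \cong T^k$ for a nonabelian simple $T$ and $k \ge 2$, and $N$ acts regularly on $\mathcal{P}$, giving $|T|^k = |\mathcal{P}| = (s+1)(st+1)$. Because $T$ is nonabelian we have $C_G(N) = 1$, and the regularity of $N$ yields an embedding $G_P \hookrightarrow \Aut(N) = \Aut(T)\Wr \Sym(k)$. Composing with the natural projection gives $\pi: G_P \to \Sym(k)$ with image $X := \pi(G_P)$ and kernel $K := G_P \cap \Aut(T)^k$. The image $X$ is transitive on $\{1,\dots,k\}$: otherwise, the product of the $T$-factors in any nontrivial $X$-orbit would be a proper normal subgroup of $G$ strictly between $1$ and $N$, contradicting minimality.

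The plan is to combine a size bound from the quadrangle with a structural bound from the $2$-transitive action on $\Gamma(P)$ via Lemma \ref{lem:lognk}. By duality I may assume $s \le t$, so Lemma \ref{lem:Pbounds} yields $|T|^k < (t+1)^3$. By Lemma \ref{comp-factor} applied to the edge $\{P,\ell\}$, every composition factor of $G_P$ (in particular of the quotient $X$) appears as a composition factor of $G_P^{\Gamma(P)}$ (which is $2$-transitive of degree $t+1$), of $G_{P,\ell}^{\Gamma(\ell)}$ (of degree $s$), or of $G_{P,\ell}^{\Gamma(P)}$ (of degree $t$). Examining the socle $S$ of $G_P^{\Gamma(P)}$: in the nonabelian-simple case I would argue that $S$ descends to a composition factor of $X$, so $X \le \Sym(k)$ carries the $2$-transitive action of $S$ of degree $t+1 > k$ (the inequality $t+1 > k$ is easy, since otherwise $60^k \le |T|^k < (t+1)^3 \le k^3$ fails already), and Lemma \ref{lem:lognk} then forces $\log_2(t+1) < k$. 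Combined with $|T| \ge 60$ this yields
\[(t+1)^3 > |T|^k \ge 60^{\log_2(t+1)} = (t+1)^{\log_2 60} > (t+1)^5,\]
a contradiction. The elementary-abelian-socle case ($t+1 = p^f$) is handled analogously via the affine part of Lemma \ref{lem:lognk}, which gives $(t+1)^2 \le 2^k$ and hence $|T|^k \ge 60^k \ge (t+1)^{2\log_2 60}$, again contradicting $|T|^k < (t+1)^3$.

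The principal obstacle is verifying that the socle $S$ of $G_P^{\Gamma(P)}$ really descends to a composition factor of $X$, not merely of the kernel $K$. Since $K \le \Aut(T)^k$, its composition factors lie in $\{T\}\cup\{\textrm{composition factors of }\Out(T)\}$, so when $S$ is nonabelian simple and $S \not\cong T$ the composition factor must come from $X$ and the argument above applies directly. When $S \cong T$ one must exploit the classification of $2$-transitive almost simple groups together with the equation $|T|^k = (s+1)(st+1)$ and the bound $\mu(T) \le t+1$ (from the second part of Lemma \ref{comp-factor}) to rule out the remaining cases by explicit numerical analysis. Analogous care is needed in the elementary-abelian case to track whether the prime $p$ with $t+1 = p^f$ contributes its composition factor to $X$ or is absorbed into $K$.
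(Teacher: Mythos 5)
Your setup and the final numerics are fine, but the structural reduction at the heart of your argument has a genuine gap, and it is exactly the step the paper avoids. The paper's proof is short because it uses the key property of the TW type that $G_P\cong G/N$ embeds in $\Sym(k)$ (not merely in $\Aut(T)\Wr\Sym(k)$, which is all that $C_G(N)=1$ gives you); with that, Lemma \ref{lem:lognk} applies directly to the $2$-transitive action of $G_P$ on $\Gamma(P)$ and yields $t+1<2^k$, whence $|T|^k<(t+1)^5<32^k$ contradicts $|T|\ge 60$. Because you only establish the weaker embedding into $\Aut(T)\Wr\Sym(k)$, you are forced to argue that the $2$-transitive quotient $G_P^{\Gamma(P)}$ ``comes from'' $X=\pi(G_P)\le\Sym(k)$ rather than from $K=G_P\cap\Aut(T)^k$, and this is where the proof breaks down. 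First, your reduction of the bad case to ``$S\cong T$'' rests on the claim that the composition factors of $K\le\Aut(T)^k$ lie in $\{T\}\cup\{\text{composition factors of }\Out(T)\}$; this is false, since composition factors of a subgroup need not be composition factors of the ambient group (e.g.\ $A_5\le A_6$). What is true is that every simple composition factor of $K$ is a section of $\Aut(T)$, which is a much weaker constraint. Second, and more importantly, even the cases you do isolate ($S\cong T$, and the affine case where $p$ ``is absorbed into $K$'') are not resolved: you defer them to ``explicit numerical analysis'' that is never carried out, and they are not vacuous --- if $K^{\Gamma(P)}\ne 1$ then $K^{\Gamma(P)}$ is transitive and contains $\soc(G_P^{\Gamma(P)})$, and the only bound one gets cheaply is $t+1\le|\Aut(T)|$, which combined with $|T|^k<(t+1)^5$ kills only $k\ge 6$ or so. So as written the lemma is not proved.

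Two smaller points. The appeal to duality to assume $s\le t$ is not available here: Hypothesis \ref{hyp:GQ1quasi} (quasiprimitive on $\calP$ but not on $\calL$) is not self-dual. This is harmless, since replacing Lemma \ref{lem:Pbounds} by the bound $|\calP|<(t+1)^5$ (valid from $s\le t^2$ alone, and the one the paper uses) still gives a contradiction because $\log_2 60>5$; but you should argue that way rather than dualising. Finally, your invocation of Lemma \ref{comp-factor} points in the wrong direction for what you need: it tells you that composition factors of $G_P$ (hence of $X$) appear in the local actions, whereas the issue is whether the socle of the local action appears in $X$; the lemma cannot supply that.
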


\begin{proof}
Assume by way of contradiction that $G$ is quasiprimitive with type $\TW$ on $\mathcal{P}$.
Then $G$ has a unique minimal normal subgroup $N = T_1 \times T_2 \times \cdots \times T_k \cong T^k$, where $k \ge 2$ and $T$ is a nonabelian finite simple group. Since $N$ acts regularly on $\mathcal{P}$, we have $|\mathcal{P}| = |N|$ and $G/N \liso \Sym(k)$.  On the other hand, if $P \in \mathcal{P}$, $G_P$ is $2$-transitive on the $t+1$ lines incident with $P$, and since $G = NG_P$, it must be that $\Sym(k)$ has a subgroup with a $2$-transitive action on $t+1$ elements. 
 By Lemma \ref{lem:lognk}, $t+1 < 2^k$ and hence
 \[ |T|^k = |\mathcal{P}| = (s+1)(st+1) < (t+1)^5 < 32^k,\]
 a contradiction, since $T$ is a nonabelian simple group.  
\end{proof}




\begin{lem}
 \label{lem:notPAon1}
  Assume Hypothesis \ref{hyp:GQ1quasi}.  Then, $G$ cannot act on $\mathcal{P}$ with type $\PA$.
\end{lem}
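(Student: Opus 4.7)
The plan is to assume for contradiction that $G$ acts on $\mathcal{P}$ with type $\PA$, so that $N := \soc(G) \cong T^k$ for a nonabelian finite simple group $T$ and some $k \ge 2$; $G \le \Aut(T) \Wr S_k$; $N$ is transitive on $\mathcal{P}$ with $|\mathcal{P}| = |\Delta|^k$ for some transitive $T$-set $\Delta$; and $N_P \ne 1$ for each $P \in \mathcal{P}$. Since $G$ is not quasiprimitive on $\mathcal{L}$ and $N$ is the unique minimal normal subgroup of $G$, $N$ must be intransitive on $\mathcal{L}$.

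First, I would pin down the $N$-orbit structure on $\mathcal{L}$. Since $N$ is transitive on $\mathcal{P}$, we have $G = NG_P$ and $G/N \cong G_P/N_P$. The normal subgroup $N_P$ of the 2-transitive (hence primitive) $G_P$-action on the $t+1$ lines through $P$ has orbits forming a block system, hence is either line-wise trivial or transitive; the latter, combined with $N$-transitivity on $\mathcal{P}$, would make $N$ transitive on all flags and thus on $\mathcal{L}$, a contradiction. So $N_P \le \stabkernel{G}{P}$, each of the $t+1$ $N$-orbits on $\mathcal{L}$ meets every point of $\mathcal{P}$ in a unique line (each orbit is a spread), and $G/N \cong G_P/N_P$ acts faithfully and 2-transitively of degree $t+1$ on these $N$-orbits.

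Next I would use the embedding $G/N \le \Out(T) \Wr S_k$ together with Schreier's conjecture (a consequence of CFSG), which ensures that $\Out(T)$ is solvable. Letting $\pi \colon \Out(T) \Wr S_k \to S_k$ be the top projection and $K := (G/N) \cap \Out(T)^k$, the subgroup $K$ is solvable and normal in the primitive group $G/N$, so primitivity forces $K$ to be either trivial or transitive in the action on the $t+1$ $N$-orbits. In the trivial case, the 2-transitive action factors through $(G/N)/K \le S_k$, and Lemma \ref{lem:lognk} yields $t+1 < 2^k$ (sharpened to $t+1 < 2k$ in the almost simple subcase). In the transitive case, $G/N$ is of affine 2-transitive type with elementary abelian socle $V$ of order $p^d = t+1$ contained in $K \le \Out(T)^k$; using that the $p$-rank of $\Out(T)^k$ grows linearly in $k$, together with the affine clause of Lemma \ref{lem:lognk} applied to $\pi(G/N)$, one analogously obtains $t+1 \le 2^{k/2}$ up to a small list of exceptions.

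Combining either bound with $|\mathcal{P}| < (t+1)^5$ from Lemma \ref{lem:Pbounds} yields $|\Delta|^k = |\mathcal{P}| < 32^k$, so $|\Delta| < 32$, and since $|\Delta| \ge 5$ for every nonabelian finite simple group this leaves only a short explicit list of pairs $(T, \Delta)$. The main obstacle, and the bulk of the remaining work, is to eliminate these small configurations: I would combine the arithmetic constraint $(s+1)(st+1) = |\Delta|^k$ with Lemma \ref{lem:GQbasics}, the inequality $|G/N| \le |\Out(T)|^k \cdot k!$ and the lower bound $|G/N| \ge (t+1)t$ forced by 2-transitivity, plus the classification of 2-transitive groups of small degree, to rule out each remaining configuration and obtain the desired contradiction.
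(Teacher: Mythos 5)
Your overall strategy is the same as the paper's: show that $N_P$ acts trivially on $\Gamma(P)$ so that the $2$-transitive group of degree $t+1$ is a section of $\Out(T)\Wr S_k$, split according to whether the ``base-group part'' of that section acts trivially, invoke Lemma \ref{lem:lognk} to bound $t+1$ in terms of $k$, and finish by arithmetic on $|\mathcal{P}|=|\Delta|^k<(t+1)^5$. However, there is a genuine gap in your transitive-$K$ case. When $K=(G/N)\cap\Out(T)^k$ is transitive on the $t+1$ orbits, the $2$-transitive action does \emph{not} factor through $\pi(G/N)\le S_k$, so ``the affine clause of Lemma \ref{lem:lognk} applied to $\pi(G/N)$'' is not available; and the $p$-rank of $\Out(T)^k$ being linear in $k$ only gives $t+1\le p^{O(k)}$, an \emph{upper} bound that is exponentially too weak to yield $|\Delta|<32$. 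Worse, since $G$ permutes the $k$ coordinates transitively, the elementary abelian socle sitting inside $\Out(T)^k$ splits equally over the coordinates, forcing $t+1=q^k$ for some prime power $q\ge 2$ dividing $|\Out(T)|$ --- i.e.\ $t+1\ge 2^k$, the opposite of your claimed $t+1\le 2^{k/2}$. The paper does not bound this case; it \emph{eliminates} it outright (the step ``$Y_P^{\Gamma(P)}=1$''), using the classification of affine $2$-transitive groups, the Schreier conjecture, and minimal permutation degrees to show that neither $\PSL(k,q)$, $\PSp(k,q)$, nor a large cyclic group from $\AGammaL(1,q^k)$ can be involved in $\Sym(k)$. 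As written, your argument does not close this case.

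Second, even where your bounds are correct, the endgame is a plan rather than a proof. Your reduction to $|\Delta|<32$ leaves dozens of pairs $(T,\Delta)$ (all primitive actions of degree $<32$ of $A_5,A_6,A_7,\dots$, $\PSL(2,q)$ for many $q$, $\PSL(3,2)$, $\PSL(3,3)$, $M_{11}$, $M_{12}$, $\PSU(3,3)$, etc.), whereas the paper's sharper case split gets $|T:T_\delta|<6$ (hence $T\cong A_5$, $|\Delta|=5$) in the affine subcase and $k\le 9$ with explicit pairs $(k,t+1)$ in the almost simple subcase, and then rules out each by a page of careful divisibility and integrality arguments on $(s+1)(st+1)=|\Delta|^k\cdot|N_B:N_P|$. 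You correctly identify this as ``the bulk of the remaining work,'' but it is not carried out, and with the weaker reduction it would be substantially longer than in the paper. You also omit the (easy but necessary) case $t+1\le k$, which must be handled before Lemma \ref{lem:lognk} applies.
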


\begin{proof}
Assume Hypothesis \ref{hyp:GQ1quasi} and furthermore suppose that $G$ acts quasiprimitively with type PA on points. The group $G$ has a unique minimal normal subgroup $N = T_1 \times T_2 \times \dots \times T_k \cong T^k$, where $k>1$ and $T$ is a nonabelian simple group; $G \leqslant (H_1 \times H_2 \times \dots \times H_k) \rtimes \Sym(k) \cong H \Wr \Sym(k)$, where $T \leqslant H \leqslant \Aut(T)$; and $G$ acts transitively by conjugation on the simple direct factors of $N$.  The group $G$ preserves a product structure $\Delta^k$ on $\mathcal{P}$, and $N_P$ is a subdirect subgroup of the stabilizer $N_B \cong T_\delta^k$, where $B = (\delta, \delta, \dots, \delta)$ for $\delta \in \Delta$, i.e., $N_P$ projects onto $T_\delta$ in each coordinate.  
Note that the minimal normal subgroup $N$ of $G$ has exactly $t+1$ orbits on $\mathcal{L}$ since $N$ is transitive on $\calP$ but not $\calL$ by Lemma \ref{lem:t+1orbs}.

 Since the quotient graph $\Gamma_N$ is the complete bipartite graph $K_{1,t+1}$ and $\Gamma$ is locally $(G,2)$-arc-transitive, if $P \in \mathcal{P}$, then $N_P^{\Gamma(P)} = 1$, i.e., $N_P \le \stabkernel{G}{P}$.  Define $Y:= G \cap (H_1 \times H_2 \times \dots \times H_k)$, i.e., $Y$ is the (normal) subgroup of $G$ that fixes each of the $k$ coordinates.  

 Suppose $Y_P^{\Gamma(P)} \neq 1$.  Since $Y_P^{\Gamma(P)} \lhd G_P^{\Gamma(P)}$, we have $\soc(G_P^{\Gamma(P)}) \le Y_P^{\Gamma(P)}$, which implies $Y_P$ is transitive on $\Gamma(P)$.  By Burnside's Theorem, $\soc(G_P^{\Gamma(P)})$ is either elementary abelian and regular or a nonabelian finite simple group.  Since $N_P \le \stabkernel{G}{P}$, $Y_P^{\Gamma(P)} \cong Y_P/Y_{(\Gamma(P))}$, and $Y_P/N_P$ is solvable by the Schreier Conjecture, $G_P^{\Gamma(P)}$ is affine and $t+1 = q^k$, where $q$ is a prime power dividing $|H/T|$.  If $G_P^{\Gamma(P)}$ is not solvable, then a nonabelian finite simple group is involved in $\Sym(k)$, and so $k \ge 5$.  Examining \cite[Table 7.3]{CameronPerm} and noting that none of $G_2(q)$, $\PSU(3,3)$, or $\PSL(2,13)$ are involved in $\Alt(6)$, we see that one of $\PSL(k,q)$ or $\PSp(k,q)$ must be involved in $\Sym(k)$.  However, by \cite[Table 5.2A]{KleidmanLiebeck}, this is impossible when $k \ge 5$.  If $G_P^{\Gamma(P)}$ is solvable, then, since none of the groups arising from near-fields have the structure $(H/T) \Wr \Sym(2)$, we may assume that $G_P^{\Gamma(P)} \liso \AGammaL(1,q^k)$.  However, by, for instance, \cite[Proposition 2.7]{MSV}, this means $\Sym(k)$ contains a cyclic group of order at least $(q^k - 1)/k$.  This implies that $(q^k - 1)/k \le k$, which only holds when $q = 2$ and $k < 5$. However, $k \nmid (q^k - 1)$ in this instance, which implies $2^k - 1 < k$, a contradiction.  Hence $Y_P^{\Gamma(P)} = 1$.
 
  Since $Y_P^{\Gamma(P)} = 1$, we have $Y_P \le \stabkernel{G}{P}$.  Moreover, since $G_P/Y_P \liso G/Y \liso \Sym(k)$ and $G_P^{\Gamma(P)} \cong G_P/\stabkernel{G}{P}$, a subgroup of $\Sym(k)$ has a $2$-transitive action on $t+1$ points.  If $t+1 \le k$ and $|\Delta| = d$, then by Lemma \ref{lem:Pbounds}(iii),
 \[d^k = |\Delta|^k \le |\mathcal{P}| < (t+1)^5 < k^5.\]
 This implies $d < k^{\frac{5}{k}}$.  Since $d = |T:T_\delta|$, a finite simple group must have a permutation representation on at least $d$ points, and so $k^{\frac{5}{k}} > d \ge 5$, which implies $k < 5$.  However, this means $t \le 3$, a contradiction.
 Therefore, $t+1 > k$.  

 Consider first the case when $G_P^{\Gamma(P)}$ is affine.  By Lemma \ref{lem:lognk}, $\log_2(t+1) \le k/2$.  On the other hand, since $|T:T_\delta| < (t+1)^\frac{5}{k}$, 
 \[\log_2 |T:T_\delta| < \frac{5}{k} \log_2 (t+1) \le \frac{5}{2},\]
 so $|T:T_\delta| < 6$.  This means $T$ is isomorphic to a subgroup of $\Sym(5)$, i.e., $T \cong \Alt(5)$ and $|\Delta| = 5$.  Since $t < 2^\frac{k}{2} - 1$, $s \le t^2 < (2^\frac{k}{2} - 1)^2$; in particular, $(s+1) < 2^k$.  Also, $N_P$ is a subdirect subgroup of $A_4^k$, and $5^k \mid |\mathcal{P}|$.  Since $(st+1) \le 2^\frac{3k}{2} < 5^k$, this implies that $5 \mid (s+1)$ and $5 \mid (st+1)$.  If $G_\ell^{\Gamma(\ell)}$ is almost simple, then, since $N_\ell$ is transitive on $\Gamma(\ell)$, we have $\soc(G_\ell^{\Gamma(\ell)}) \cong A_5$, and so $s+1 \in \{5,6\}$.  If $s+ 1 = 6$, then $5$ is coprime to $|\mathcal{P}| = 6(5s+1)$, a contradiction.  So $s +1 = 5$, which means 
 \[5^k = |\Delta|^k \le |\mathcal{P}| = 5(4t+1) \le 5 \cdot 65 = 325.\]  Hence $k = 2,3$, a contradiction, since a $2$-transitive group on more than $k$ points would be involved in $\Sym(k)$ for $k = 2$ or $3$.  Thus $G_\ell^{\Gamma(\ell)}$ is also affine, and so $s+1 = 5^c$ for some $c < k$.  On the other hand, this means that $G_{P,\ell}$ is involved in $\Sym(k)$.  Since $5^k \le (s+1)(st+1) < 5^{2c}t$, we have
 \[ 5^{k-2c} \le t+1 < 2^\frac{k}{2},\] which implies that $c > 0.39k$.  If $G_\ell^{\Gamma(\ell)}$ is affine but not solvable, then by \cite[Table 7.3]{CameronPerm} either $\SL(m, 5^\frac{c}{m})$ or $\Sp(m, 5^\frac{c}{m})$ is involved in $\Sym(k)$, where $m \mid c$.  In either case, any
 permutation representation of such a group has degree at least $(5^c - 1)/(5^\frac{c}{m} - 1) > 5^\frac{c}{2}$, which implies
 \[5^{0.195k} < 5^\frac{c}{2} < k.\]  However, this implies $k \le 5$ and so $k = 5$.  However, neither $\SL(m, 5^\frac{c}{m})$ or $\Sp(m, 5^\frac{c}{m})$ is involved in $\Sym(5)$, and so we may assume $G_\ell^{\Gamma(\ell)}$ is solvable.  If $G_\ell^{\Gamma(\ell)}$ is solvable, then either $s+1 = 25$ (in which case $k = 2$, a contradiction) or $(5^c - 1)/c \le k$.    However, for all $k \ge 2$,
 \[ k - \frac{5^{0.39k} - 1}{0.39k} < 0,\] a contradiction.  Hence $G_P^{\Gamma(P)}$ is not affine.
 
 We next assume that $G_P^{\Gamma(P)}$ is almost simple.  By Lemma \ref{lem:lognk}, unless $t+1 = 28$ and $k = 9$, $t+1 < 2k$, which implies by Lemma \ref{lem:Pbounds}(iii) that
 \[5^k \le d^k = |\Delta|^k \le |\mathcal{P}| < (t+1)^5 < (2k)^5,\] which implies $k \le 8$.  This gives $k = 5$ and $t+1 = 6$, $k = 6$ and $t+1 = 10$, $k = 7$ and $t+1 = 8$, or $k = 8$ and $t+1 = 15$.  Consider first the case when $k = 5$ and $t+1 = 6$.  This means $s \le 25$, and $d^5$ divides $|\mathcal{P}| = (s+1) (5s + 1) \le 3276$; however, $d^5 \ge 5^5 = 3125$, so $d^5 = |\mathcal{P}| = 3125$; a contradiction since $5s+1$ is coprime to $5$. Next, consider the case when $k = 6$ and $t+1 = 10$.   This means $s \le 81$, and $d^6$ divides $|\mathcal{P}| = (s+1) (9s + 1) \le 59860$; hence $d = 5$ or $d = 6$.  If $d = 5$, then $15625 \mid |\mathcal{P}|$, and so $|\mathcal{P}| = 5^6, 2\cdot 5^6,$ or $3 \cdot 5^6$.  However, $(s+1)(9s+1) = c\cdot 5^6$ does not have an integer root when $c = 1, 2, 3,$ a contradiction. 
 We next consider the case when $k = 7$ and $t+1 = 8$.  This means $s \le 49$, but \[78125 = 5^7 \le d^7 \le |\mathcal{P}| = (s+1) (7s + 1) \le 17200,\] a contradiction.  Suppose now that $k = 8$ and $t+1 = 15$.  This means $s \le 196$, and $d^8$ divides $|\mathcal{P}| = (s+1)(14s+1) \le 540765,$ i.e., $d = 5$.  Since $5^8 = 390625$, this means $(s+1)(14s+1) = 5^8$, a contradiction, since there are no integer solutions.
 Finally, assume that $t+1 = 28$ and $k = 9$.  This means $s \le 729$, and $d^9$ divides $|\mathcal{P}| = (s+1)(27s + 1) \le 14369320 < 7^9$.  Hence $d = 5$ and $|\mathcal{P}| = c \cdot 5^9$, where $1 \le c \le 7$, or $d = 6$ and $|\mathcal{P}| = 6^9$.  In each of these situations, $(s+1)(27s+1) = |\mathcal{P}|$ has no integer solutions.  Therefore, no such generalized quadrangle is possible.
\end{proof}

We can now prove the main result of this section, which is a classification of the locally $(G,2)$-transitive generalized quadrangles where $G$ is only quasiprimitive on points.

\begin{thm}
 \label{thm:quasi1}
 Let $\mathcal{Q}$ be a thick locally $(G,2)$-transitive generalized quadrangle such that $G$ is quasiprimitive on points but not on lines.  
 Then, $\mathcal{Q}$ is the generalized quadrangle of order $(3,5)$.
\end{thm}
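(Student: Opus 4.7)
The proof is essentially an assembly of the type-by-type lemmas already established in this section. The plan is to apply Lemma \ref{lem:types1} to restrict the quasiprimitive type of $G$ on $\calP$ to one of $\HA$, $\HS$, $\AS$, $\PA$, or $\TW$, then invoke Lemmas \ref{lem:notHSon1}, \ref{lem:notASon1}, \ref{lem:notPAon1}, and \ref{lem:notTWon1} to eliminate all four non-affine types, and finally conclude via Lemma \ref{lem:HAon1} that $\calQ$ must be the unique generalized quadrangle of order $(3,5)$.

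Before invoking Lemma \ref{lem:types1}, I would verify that its hypotheses are met. The incidence graph $\Gamma$ of $\calQ$ is connected (it has diameter $4$) and bipartite with biparts $\calP$ and $\calL$, which are exactly the two $G$-orbits on $V(\Gamma)$ since $G$ is not transitive on $\calL$ (being quasiprimitive but having more than one orbit on lines would contradict quasiprimitivity on $\calP$ via local $2$-arc-transitivity and edge-transitivity). Replacing $G$ by its faithful quotient on $V(\Gamma)$ if necessary, I would further observe that any $g \in G$ fixing every point of a thick generalized quadrangle also fixes every line (since each line is determined by any two of its points) and conversely; thus the kernel of $G$ on either bipart is trivial, so $G$ acts faithfully on both $\calP$ and $\calL$. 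With $s = 2$, Lemma \ref{lem:types1} then restricts the quasiprimitive type of $G$ on $\calP$ to the five types listed.

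Having narrowed the possibilities, each of the four non-affine types is ruled out by a lemma in this section: Lemma \ref{lem:notHSon1} handles $\HS$ via \cite[Theorem 1.1]{BambergPopielPraeger}; Lemma \ref{lem:notASon1} uses \cite[Theorem 1.3]{localstar} together with arithmetic constraints from Lemma \ref{lem:GQbasics}; Lemma \ref{lem:notTWon1} uses a cardinality argument based on Lemma \ref{lem:lognk}; and Lemma \ref{lem:notPAon1} eliminates $\PA$ through a detailed case analysis on the action type of $G_P^{\Gamma(P)}$. Only the type $\HA$ survives, at which point Lemma \ref{lem:HAon1} identifies $\calQ$ as the generalized quadrangle of order $(3,5)$.

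The serious work has already been done in the preceding lemmas; in particular, the $\PA$ case (Lemma \ref{lem:notPAon1}) is the main obstacle of the whole section and occupies the bulk of the argument. The theorem itself is essentially a clean assembly step, so I do not anticipate further technical difficulty beyond checking that the faithfulness and connectedness conditions of Lemma \ref{lem:types1} genuinely hold in our setting.
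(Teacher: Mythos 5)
Your proposal is correct and follows exactly the same route as the paper: invoke Lemma \ref{lem:types1} to restrict the quasiprimitive type of $G$ on $\mathcal{P}$ to $\HA$, $\HS$, $\AS$, $\TW$, or $\PA$, and then cite Lemmas \ref{lem:HAon1}--\ref{lem:notPAon1} to dispose of each type. The extra care you take in checking faithfulness and connectedness for Lemma \ref{lem:types1} is reasonable (though your parenthetical about transitivity on $\mathcal{L}$ is slightly garbled, since $G$ is in fact transitive on lines by edge-transitivity); the paper simply applies the lemma directly.
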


\begin{proof}
 By Lemma \ref{lem:types1}, if the incidence graph $\Gamma$ is locally $(G,2)$-arc-transitive, but $G$ is only quasiprimitive on $\mathcal{P}$, then $G$ acts on $\mathcal{P}$ with type $\HA$, $\HS$, $\AS$, $\TW$, or $\PA$.  The result follows from Lemmas \ref{lem:HAon1}, \ref{lem:notHSon1}, \ref{lem:notASon1}, \ref{lem:notTWon1}, and \ref{lem:notPAon1}.
\end{proof}

\section{Quasiprimitive on both points and lines}
\label{sect:both}

This section is dedicated to the characterization of the locally $2$-transitive generalized quadrangles with a collineation group that is quasiprimitive on both points and lines.

\begin{thm}\label{thm:QPonboth1}
 Let $\mathcal{Q}$ be a thick locally $(G,2)$-transitive generalized quadrangle such that $G$ is quasiprimitive on both points and lines.  
Then $G$ cannot act with type $\{\HA, \HA \}$, $\{\TW, \TW\}$, or $\{\SD, \PA\}$.
\end{thm}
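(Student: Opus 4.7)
The plan is to treat each of the three forbidden type-pairs in turn, in each case exploiting that the unique minimal normal subgroup $N$ of $G$ must be transitive on both $\mathcal{P}$ and $\mathcal{L}$ by quasiprimitivity. For $\{\HA, \HA\}$, $N \cong C_p^d$ is regular on both biparts, so $|\mathcal{P}| = |\mathcal{L}|$ forces $s = t$ and $(s+1)(s^2+1) = p^d$; since $\gcd(s+1, s^2+1)$ divides $2$ and both factors must be $p$-powers, a brief parity check (treating odd and even $s$ separately) shows that no thick value of $s$ is consistent with this identity. For $\{\TW, \TW\}$, $N \cong T^k$ with $T$ nonabelian simple and $k \ge 2$ is again regular on both biparts, so $s = t$, and since $C_G(N) = 1$ the stabilizer $G_P \cong G/N$ embeds in $\Sym(k)$ via the conjugation action on the simple direct factors of $N$; Lemma \ref{lem:lognk} then forces $t + 1 < 2^k$, whence $|T|^k = (s+1)(s^2+1) < (t+1)^3 < 2^{3k}$ and so $|T| < 8$, a contradiction.

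The harder case is $\{\SD, \PA\}$. By duality I may assume that $G$ is of type $\SD$ on $\mathcal{P}$ and of type $\PA$ on $\mathcal{L}$; then $N = T^k$ with $k \ge 2$, $N_P \cong T$ is a full diagonal subgroup, and (using $|\mathcal{L}| = |T:T_\delta|^k$ together with the transitivity of $N$ on $\mathcal{L}$) the stabilizer $N_\ell$ is conjugate in $N$ to $T_\delta^k$, where $T_\delta$ is a proper nontrivial subgroup of $T$. Intersecting the diagonal with $T_\delta^k$ gives $|N_{P, \ell}| = |T_\delta|$. The crucial structural step is to show that $N_P$ acts faithfully on $\Gamma(P)$: the kernel is a normal subgroup of the simple group $N_P \cong T$ and therefore is trivial or all of $N_P$, but the latter would force $N_P \le N_\ell$ for every $\ell \in \Gamma(P)$, and then projecting the diagonal subgroup $N_P$ onto a single coordinate of $T_\delta^k$ would force $T = T_\delta$, a contradiction. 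An identical argument handles $N_\ell$ on $\Gamma(\ell)$. Since $N_P^{\Gamma(P)} \cong T$ is a nontrivial nonabelian normal subgroup of the $2$-transitive group $G_P^{\Gamma(P)}$, Burnside's theorem on $2$-transitive groups forces $G_P^{\Gamma(P)}$ to be almost simple with socle $T$, so $T$ admits a $2$-transitive action of degree $n := t+1$. Computing the orbit sizes of $N_P$ and $N_\ell$ on their respective neighborhoods then produces
\begin{equation*}
t + 1 = |T : T_\delta|, \qquad s + 1 = |T_\delta|^{k-1}.
\end{equation*}

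Substituting these into $(s+1)(st+1) = |\mathcal{P}| = |T|^{k-1}$ and simplifying yields the identity
\begin{equation*}
|T_\delta|^{k-1} = 2 + n + n^2 + \cdots + n^{k-2}.
\end{equation*}
To finish the case I will invoke the standard fact that no nonabelian finite simple group is sharply $2$-transitive (every sharply $2$-transitive finite group is Frobenius and hence has a nontrivial normal subgroup), which gives $|T| > n(n-1)$ and therefore $|T_\delta| = |T|/n \ge n$. Combined with $n = t + 1 \ge 3$ from thickness, the elementary inequality $n^{k-1} > 2 + n + n^2 + \cdots + n^{k-2}$ then contradicts the displayed identity, completing the proof. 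The subtlest step is establishing the faithfulness of $N_P$ on $\Gamma(P)$ and correctly reading off the parameters $s + 1$ and $t + 1$ from the $\SD$/$\PA$ structure; once these are in hand, the numerical contradictions in all three cases are short.
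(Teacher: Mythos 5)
Your treatments of $\{\HA,\HA\}$ and $\{\TW,\TW\}$ are correct, and are in fact more self-contained than the paper's, which simply cites (and adapts) \cite[Lemma 3.5]{primGQ}: regularity of the unique minimal normal subgroup on both biparts gives $s=t$, and then the factorization $(s+1)(s^2+1)=p^d$ (resp.\ the bound $|T|^k<(t+1)^3<8^k$ via Lemma \ref{lem:lognk}) kills each case.

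The $\{\SD,\PA\}$ case, however, has a genuine gap. Your entire parameter computation rests on the assertion that $|\calL|=|T:T_\delta|^k$ and hence that $N_\ell$ is conjugate to the full product $T_\delta^k$. But in Praeger's PA type the stabilizer $N_\ell$ is only guaranteed to be a \emph{subdirect} subgroup of the block stabilizer $N_{B'}\cong T_\delta^k$ (it projects onto $T_\delta$ in each coordinate but may be a proper, e.g.\ partially diagonal, subgroup), and correspondingly $|\calL|=|T:T_\delta|^k\cdot|T_\delta^k:N_\ell|$, which can exceed $|\Delta|^k$. Without $N_\ell=T_\delta^k$ you lose all three of the identities you need: $|N_{P,\ell}|=|T_\delta|$ (and even granting $N_\ell=\prod_i T_{\delta_i}$, the intersection of a twisted diagonal $\{(x,x^{\phi_2},\dots,x^{\phi_k})\}$ with $\prod_i T_{\delta_i}$ is $\{x: x^{\phi_i}\in T_{\delta_i}\ \forall i\}$, which need not have order $|T_\delta|$), $t+1=|T:T_\delta|$, and $s+1=|T_\delta|^{k-1}$; the final numerical identity $|T_\delta|^{k-1}=2+n+\cdots+n^{k-2}$ and its refutation then collapse. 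A secondary issue: the ``identical argument'' for faithfulness of $N_\ell$ on $\Gamma(\ell)$ does not transfer, since $N_\ell$ is not simple, so its kernel on $\Gamma(\ell)$ need not be trivial or everything. The paper avoids all of this by quoting the Giudici--Li--Praeger classification \cite[Theorems 1.1, 1.2]{localdifferent} of the possible valency pairs for locally $2$-arc-transitive graphs with quasiprimitive types $\{\SD,\PA\}$ --- namely $\{q^d,q+1\}$, $\{q,(q^n-1)/(q-1)\}$, $\{q^d,q^2+1\}$, $\{q^d,q^3+1\}$ --- and then eliminating each pair by the divisibility and inequality constraints of Lemma \ref{lem:GQbasics}. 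To repair your argument you would either need to justify why the generalized quadrangle structure forces $N_\ell$ to be the full product (it does not in general), or carry the subdirect possibility through the computation, which essentially amounts to redoing the analysis of \cite{localdifferent}.
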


\begin{proof}\ \\
\textbf{Case $\{\HA, \HA\}$:}
The quasiprimitive groups of type HA are also primitive (see \cite[Section 5]{FinQuasiprimGraphs}), and a group $G$ cannot act primitively with type $\{\HA, \HA\}$ on the points and lines of a finite generalized quadrangle \cite[Lemma 3.5]{primGQ}. 

\noindent \textbf{Case $\{\TW, \TW\}$:}
The proof is exactly the same as \cite[Lemma 3.5]{primGQ}, where it was shown that a group $G$ cannot act primitively with type $\{\TW, \TW\}$ on the points and lines of a finite generalized quadrangle, except the word ``primitive'' is replaced by the word ``quasiprimitive.''

\noindent \textbf{Case $\{\SD, \PA\}$:}
We obtain the possible vertex valencies from {\cite[Theorems 1.1, 1.2]{localdifferent}}, which are
$\{q^d, q+1\}, \{q, (q^n - 1)/(q-1)\}, \{q^d, q^2 + 1\}, \{q^d, q^3 + 1\},$ where $d,n$ are positive integers, $n \geqslant 3$, and $q$ is a prime power. 

Suppose first that the vertex valencies are $\{q^d, q+1\}$.  By Lemma \ref{lem:GQbasics}(iii), $q^d \leqslant (q+1)^2 = q^2 + 2q + 1$, and so $d = 1$ or $d = 2.$  If $d = 1$, then without a loss of generality, $s+1 = q+1$ and $t+1 = q$, and so $s = q$ and $t = q-1.$  By Lemma \ref{lem:GQbasics}(ii), this implies that $2q-1$ divides $q(q-1)(q+1)q = q^2(q^2-1)$.  On the other hand,
$$16q^2(q^2 - 1) = (2q-1)(8q^3 +4q^2-6q -3) - 3,$$ which implies that $2q-1$ is a divisor of $3$.  This implies that $q=1$ or $q=2$, a contradiction to $s,t > 1.$  If $d = 2$, then without a loss of generality, $s+1 = q+1$ and $t+1 = q^2$, and so $s = q$ and $t= q^2 - 1$.  By Lemma \ref{lem:GQbasics}(ii), this implies that $q^2+q-1$ divides $q^6+q^5 - q^4 - q^3.$  On the other hand,
$$q^6 +q^5 - q^4 - q^3 = (q^2+q-1)(q^4-q+1) + (1 - 2q),$$ which implies that $q^2+q-1$ is a divisor of $2q-1$, i.e., that $q^2+q-1 \leqslant 2q-1$ since $2q-1$ has no integral roots.  This means that $q^2 - q \leqslant 0$, a contradiction if $s,t > 1.$

Suppose now that the vertex valencies are $\{q, (q^n-1)/(q-1)\}$.  Without a loss of generality, $s = q-1$ and $t = (q^n - 1)/(q-1) - 1 = q^{n-1} + q^{n-2} + \cdots + q$.  Since $n \geqslant 3$, by Lemma \ref{lem:GQbasics}(iii) we have:
$$ q^2 + q \leqslant t \leqslant s^2 = q^2 - 2q + 1,$$ which is a contradiction.

Suppose now that the vertex valencies are $\{q^d,q^2+1\}$.  Without a loss of generality, $s = q^2$, and since $\sqrt{s} \leqslant t \leqslant s^2$ by Lemma \ref{lem:GQbasics}(iii), this means that $d = 2,3,$ or $4$.  If $d=2$, then $t = q^2 - 1$. Substituting in $r=q^2$, this reduces to the case $s = r$ and $t=r-1$, which was ruled out above.  If $d=3$, then $t=q^3 -1$, and by Lemma \ref{lem:GQbasics}(ii) this implies that $q^3 + q^2 -1$ divides $q^5(q^2 + 1)(q^3 - 1)$.  On the other hand,
$$q^5(q^2 + 1)(q^3 - 1) = (q^3 + q^2 -1)(q^7 - q^6 +2q^5 -2q^4 +q^3 -2q +3) + (-3q^2 -2q+3)$$ which implies that $q^3 + q^2 - 1$ divides $3q^2 + 2q - 3$.  Since $3q^2 + 2q - 3$ has no integral roots, this means that $q^3 + q^2 - 1 \leqslant 3q^2 + 2q - 3$, which implies that $q \leqslant 2$ if $q$ is an integer.  If $q=2$, however, $q^3+q^2-1$ does not divide $3q^2 + 2q - 3$, a contradiction.  Finally, if $d=4$, then, proceeding as above, we would have that $s= q^2$ and $t = q^4-1$, but setting $r= q^2$, this means that $s = r$ and $t = r^2-1$, which was ruled out above.

Finally, we suppose that the vertex valencies are $\{q^d, q^3 + 1\}$.  Without a loss of generality, assume that $s = q^3$.  Again, using Lemma \ref{lem:GQbasics}(iii), this implies that $d = 2, 3, 4, 5,$ or $6$.  When $d=2$, we have $t = q^2-1$, and so Lemma \ref{lem:GQbasics}(ii) implies that $q^3+q^2-1$ divides $q^5(q^2-1)(q^3+1)$.  On the other hand,
$$q^5(q^2-1)(q^3+1) = (q^3 + q^2 -1)(q^7-q^6+2q^4-3q^3+2q^2-3) + (5q^2-3),$$ which implies that $q^3 + q^2 -1$ divides $5q^2-3$.  Since $5q^2-3$ has no integral roots, this means $q^3 + q^2 - 1 \leqslant 5q^2 - 3$, i.e., that $q < 4.$  On the other hand, for no prime power $q \leqslant 3$ is $(5q^2 - 3)/(q^3 + q^2 - 1)$ an integer, which is a contradiction.  If $d=3$, then setting $r= q^3$ yields $s=r$ and $t=r-1$, which was ruled out above.  If $d = 4$, then $t = q^4-1$, and Lemma \ref{lem:GQbasics}(ii) implies that $q^4+q^3-1$ divides $q^7(q^4-1)(q^3+1)$.  On the other hand,
$$q^7(q^4-1)(q^3+1) = (q^4+q^3-1)(q^{10} - q^8 + q^8 -q^5 + 2q^4 - 3q^3 + 3q^2 - 4q +6) + (-9q^3 + 3q^2 - 4q + 6),$$ which implies that $q^4+q^3-1$ divides $9q^3 - 3q^2 + 4q - 6$.  Since $9q^3 - 3q^2 + 4q - 6$ has no integral roots, this means that $q^4+q^3 - 1 \leqslant 9q^3 - 3q^2 + 4q - 6$, which only holds for $q \leqslant 7$.  However, for no prime power $q \leqslant 7$ is $(9q^3 - 3q^2 + 4q -6)/(q^4+q^3-1)$ an integer, which is a contradiction.  If $d=5$, then $t=q^5-1$, and Lemma \ref{lem:GQbasics}(ii) implies that $q^5+q^3-1$ divides $q^8(q^5-1)(q^3+1)$.  On the other hand,
$$q^8(q^5-1)(q^3+1) = (q^5 + q^3 -1)(q^{11} - q^9 + q^8 + q^7 - q^6 - q^5 + q^3 + q^2 - 2q - 2) + (2q^4 + 3q^3 + q^2 - 2q - 2),$$ which implies that $q^5 + q^3 -1$ divides $2q^4 + 3q^3 + q^2 - 2q - 2 = (q+1)(2q^3 + q^2 - 2)$.  Since $(q+1)(2q^3 + q^2 - 2)$ has no positive integral roots, this implies that $q^5 + q^3 - 1 \leqslant 2q^4 + 3q^3 + q^2 - 2q - 2,$ which in turn means that $q = 2.$  Since $(2q^4 + 3q^3 + q^2 - 2q - 2)/(q^5 + q^3 -1)$ is not an integer when $q=2$, we have a contradiction.  Finally, if $d=6$, we have $t=q^6-1$.  However, setting $r = q^3$, this means that $s=r$ and $t=r^2-1$, which was ruled out above.  Since we have covered all possible vertex valencies, there cannot be such a finite generalized quadrangle.
\end{proof}

Because we will be repeatedly be making the same assumptions, for essentially the rest of this section we will be assuming the following hypothesis.

\begin{hyp}
\label{hyp:GQPA}
 Let $\mathcal{Q}$ be a thick locally $(G,2)$-transitive generalized quadrangle such that $G$ is quasiprimitive on both points and lines,
 with both actions of type PA.
 The group $G$ has a unique minimal normal subgroup $N = T_1 \times T_2 \times \dots \times T_k \cong T^k$, where $k>1$ and $T$ is a nonabelian simple group; $G \leqslant H \Wr \Sym(k)$, where $T \leqslant H \leqslant \Aut(T)$; and $G$ acts transitively by conjugation on the simple direct factors of $N$.  The group $G$ preserves a product structure $\Delta^k$ on $\mathcal{P}$ and a product structure $\Sigma^k$ on $\mathcal{L}$, and $N_P$ is a subdirect subgroup of the stabilizer $N_B \cong T_\delta^k$, where $B = (\delta, \delta, \dots, \delta)$ for $\delta \in \Delta$, i.e., $N_P$ projects onto $T_\delta$ in each coordinate. In this case, each $k$-tuple of elements from $\Delta$ is a block of points of $\mathcal{P}$.  (Similarly, $N_\ell$ projects onto $T_\epsilon$ in each coordinate for some $\epsilon \in \Sigma$, where $\ell$ is in the block $B'$ with stabilizer isomorphic to $T_\epsilon^k$.)  Finally, assume $s \leqslant t$.
\end{hyp}

We say that $N_P$ is \textit{diagonal} if $N_P \cong T_\delta$.

\begin{lem}
\label{lem:Hamming}
Assume Hypothesis \ref{hyp:GQPA}.  Let $P_1$ and $P_2$ (respectively, $\ell_1$ and $\ell_2$) be two collinear points (respectively concurrent lines) contained in blocks $B_1$ and $B_2$, and suppose that the Hamming distance between blocks $B_1$ and $B_2$ is $d$.  Then any two collinear points $P$ and $P'$ (respectively, concurrent lines $\ell$ and $\ell'$) are contained in blocks that are Hamming distance $d$ apart. 
\end{lem}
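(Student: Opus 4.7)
The plan is to use transitivity of $G$ on ordered pairs of collinear points (respectively concurrent lines), together with the observation that the ambient wreath product $H\Wr\Sym(k)$ acts on $\Delta^k$ by Hamming isometries, and similarly on $\Sigma^k$. Once we note that $G$ preserves the product structure $\Delta^k$ and is transitive on ordered pairs of collinear points, the conclusion drops out immediately from a one-line orbit argument.

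First I would check that $G$ acts on the set of blocks $\Delta^k$ as a subgroup of $H\Wr\Sym(k)$ in its product action: this is given directly by Hypothesis \ref{hyp:GQPA}. Next, the key elementary observation is that every element of $H\Wr\Sym(k)$ preserves Hamming distance on $\Delta^k$. Indeed, an element of the base group $H^k$ acts coordinate-wise on $\Delta^k$, so two $k$-tuples agree in coordinate $i$ before the action if and only if they agree in coordinate $i$ afterwards; an element of the top group $\Sym(k)$ merely permutes coordinates, so the set of coordinates on which two tuples agree is permuted but has the same size. Composing these, every $g\in G$ induces a Hamming isometry on $\Delta^k$, and an entirely analogous statement holds for the induced $G$-action on $\Sigma^k$.

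Now let $(P_1,P_2)$ and $(P,P')$ be any two ordered pairs of collinear points, and let $B_1,B_2,B,B'$ denote the blocks in $\Delta^k$ containing them. Since $\mathcal{Q}$ is locally $(G,2)$-transitive, $G$ is transitive on ordered pairs of collinear points, so there exists $g\in G$ with $P_1^g=P$ and $P_2^g=P'$. Because $g$ permutes the blocks of the product structure, $B_1^g$ is the unique block containing $P_1^g=P$, hence $B_1^g=B$, and similarly $B_2^g=B'$. The isometry property from the previous step then gives
\[
d_H(B,B')=d_H(B_1^g,B_2^g)=d_H(B_1,B_2)=d,
\]
as required. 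The argument for concurrent lines is identical, replacing $\Delta^k$ with $\Sigma^k$ and using transitivity of $G$ on ordered pairs of concurrent lines.

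There is no real obstacle here; the entire content is the observation that the product action of $H\Wr\Sym(k)$ preserves Hamming distance, which is standard, combined with the local $2$-transitivity hypothesis. The only point that would need a word of care in writing is the verification that the induced action of $g$ on the block system $\Delta^k$ is precisely the restriction of a wreath product element, but this is immediate from $G\le H\Wr\Sym(k)$.
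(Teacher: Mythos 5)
Your proof is correct and rests on the same two ingredients as the paper's: transitivity of $G$ on ordered pairs of collinear points (equivalently, transitivity of $G_{P_1}$ on $P_1^\perp\setminus\{P_1\}$ together with transitivity of $N$ on $\mathcal{P}$) and the fact that the product action on $\Delta^k$ preserves Hamming distance. The paper packages the second ingredient as a coordinate count relative to the normalized block $(\delta,\dots,\delta)$ and then moves the base point via $N$, whereas you apply the Hamming-isometry observation to all of $G$ at once; this is a mild streamlining of the same argument, not a different one.
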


\begin{proof}
We will prove the result for collinear points, but the proof for concurrent lines only involves switching the roles of points and lines.  Let $P_1$ and $P_2$ be as in the statement of the lemma.  Without loss of generality, we may assume that $B_1 = (\delta, \delta, \ldots, \delta)$.  The graph $\Gamma$ is locally 2-arc-transitive, so $G_{P_1}$ is transitive on points collinear with $P_1$, and hence $G_{P_1}$ acts transitively on the blocks in which collinear points are contained.  If $B_2$ is Hamming distance $d$ from $B_1$, precisely $k-d$ of its coordinates are $\delta,$ and, since $G_{P_1}$, which fixes $B_1$, maps $B_2$ to any other block containing a point collinear with $P_1$, any block $B_i$ containing a point collinear with $P_1$ has exactly $k-d$ entries that are $\delta$.  Finally, since $N$ is transitive on $\mathcal{P}$, the result holds for any choice of $P_1$. 
\end{proof}

\begin{lem}
 \label{lem:prodstructonP}
Assume Hypothesis \ref{hyp:GQPA}.  Then, we may identify $\calP$ with the set $\Delta^k$, $|\calP| = |\Delta|^k = |T:T_\delta|^k$, and $N_P \cong T_\delta^k$.    
\end{lem}

\begin{proof}
 Assume that $\calQ$ is a thick locally $(G,2)$-transitive generalized quadrangle such that $G$ is quasiprimitive on both points and lines with type $\{\PA, \PA \}$.  We again use the notation of Hypothesis \ref{hyp:GQPA}.
 
 By \cite[Theorem 1.1]{primGQ}, $G$ must preserve a system of imprimitivity on either $\calP$ or $\calL$.  Suppose first that $s = t$.  If $G$ is primitive on either $\calP$ or $\calL$, then up to duality we may assume that $G$ is primitive on $\calP$.  This would mean that $|\calP| = |\Delta|^k$, and by \cite[Lemma 3.1]{primGQ} this implies that $|\Delta| = 20$, $k = 2$, and $s = 7$, which is ruled out by inspection.  Otherwise, $G$ is imprimitive on both $\calP$ and $\calL$, which by Lemma \ref{lem:blocks=t} implies up to duality that any blocks of points have size $s^2 + 1 = st + 1$.  If, on the other hand, $s < t$, then by Lemma \ref{lem:Brestrictions}, we may assume that all blocks of points have size $st+1$.  Thus, in any case, if $G$ preserves a system of imprimitivity on $\calP$, we may assume that the blocks all have size $st+1$.
 
 Let $B = (\delta, \delta, \dots, \delta)$.  If $B \neq \{P\}$, then this block must have size $st + 1$, which implies that $|\Delta|^k = s + 1$.  By Lemma \ref{lem:loc2trans}, $\Gamma$ is $G$-locally primitive, so the $s+1$ points incident with a line $\ell$ are in different blocks, i.e., each line is incident with a point in each block.  On the other hand, since $k \ge 2$, not all blocks are the same Hamming distance apart, which is a contradiction to Lemma \ref{lem:Hamming}.  Hence, $B = \{P\}$, and we may identify the points of $\calP$ with elements of $\Delta^k$, implying that $|\calP| = |\Delta|^k = |T:T_\delta|^k$, as desired.
\end{proof}

\begin{lem}
\label{lem:katmost4}
Assume Hypothesis \ref{hyp:GQPA}.  Then $k \le 4$. 
\end{lem}

\begin{proof}
 By Lemma \ref{lem:prodstructonP}, $G$ preserves a product structure on $\calP$.  The result now follows immediately from Lemma \ref{lem:PAdimrest}. 
\end{proof}

\begin{thm}
\label{thm:notPA}
 Let $\mathcal{Q}$ be a thick locally $(G,2)$-transitive generalized quadrangle such that $G$ is quasiprimitive on both points and lines.  
Then $G$ cannot act with type $\{\PA, \PA \}$.  
\end{thm}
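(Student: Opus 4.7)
My plan is to leverage the very tight structural conclusions already established. By Lemmas \ref{lem:valencieseven} and \ref{lem:s=t} (together with the chain of lemmas leading up to them), under Hypothesis \ref{hyp:GQPA} we must have $T \cong \Sz(q)$ for $q = 2^f$ with odd $f \geq 3$, and $s = t = 2^a - 1$ for some integer $a \geq 2$; by Proposition \ref{prop:katleast4} we also have $k \geq 4$. The theorem thus reduces to ruling out this highly constrained configuration.

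The approach is a short block-counting argument on the $G$-invariant product structure $\Delta^k$ on $\mathcal{P}$. Let $B$ be the block containing a fixed point $P$, so that $N_B \cong T_\delta^k$ and $|\mathcal{P}| = |T:T_\delta|^k \cdot |B|$, and note that the blocks of this product structure are blocks of imprimitivity for $G$ on $\mathcal{P}$, so Lemma \ref{lem:Brestrictions} applies. Since $s = t$, the condition $t \mid s$ in that lemma is trivially satisfied, giving the dichotomy $|B| = st + 1 = (2^a - 1)^2 + 1$ or $|B| = s + 1 = 2^a$. I would treat the two cases separately.

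In the first case, $|T:T_\delta|^k = 2^a$ forces $|T:T_\delta|$ to be a power of $2$. A short argument shows that $T$ acts faithfully on $\Delta$ (simplicity of $T$ rules out a trivial action, since that would collapse the product structure and contradict $k \geq 2$), so by the fact that the minimal faithful permutation degree of $\Sz(q)$ is $q^2 + 1$ (which is odd), the smallest admissible power of $2$ is $2^{2f+1}$, giving the lower bound $a \geq k(2f+1)$. Comparing $2$-parts in the divisibility $|\mathcal{P}| \mid |N|$ (with $|\mathcal{P}|_2 = 2^{a+1}$ and $|N|_2 = 2^{2kf}$) yields the upper bound $a \leq 2kf - 1$, and these two bounds are incompatible. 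In the second case, $|\Delta|^k = (2^a - 1)^2 + 1 = 2(2^{2a-1} - 2^a + 1)$ has $2$-part exactly $2$ (since the second factor is odd for $a \geq 2$), which cannot be a $k$-th power for $k \geq 2$. Both cases lead to contradictions. Since the difficult structural work has already been completed in the preceding lemmas, I do not anticipate any serious obstacle at this final, essentially arithmetic, step; the only delicate point is the short verification that the action of $T$ on $\Delta$ must be faithful.
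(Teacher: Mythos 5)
Your reduction to $T\cong\Sz(q)$ with $s=t=2^a-1$ and $k\ge 4$ is legitimate, and the two cases you do treat are handled correctly: the $2$-adic comparison $a\ge k(2f+1)$ versus $a\le 2fk-1$ in the first case, and the observation that $(2^a-1)^2+1=2\cdot(\text{odd})$ cannot be a $k$-th power in the second, are both sound. However, there is a genuine gap in the case division. Lemma \ref{lem:Brestrictions} applies only to a \emph{nontrivial} system of imprimitivity (its ``moreover'' clause is derived under the assumption $b>0$, i.e.\ $|B|>1$), whereas the fibres of the product structure $\Delta^k$ need not be nontrivial blocks: Hypothesis \ref{hyp:GQPA} only requires $N_P$ to be a subdirect subgroup of $N_B\cong T_\delta^k$, and $N_P=N_B$ is allowed. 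Indeed $|B|=|N_B:N_P|=1$ is exactly what happens when $G$ is genuinely primitive of type $\PA$ on $\mathcal{P}$ --- arguably the principal subcase the theorem must exclude, and one that feeds into Theorem \ref{thm:quasi2}. In that situation $|\mathcal{P}|=|T:T_\delta|^k$ and neither $|B|=st+1$ nor $|B|=s+1$ holds, so both of your cases are vacuous and the proof as written says nothing.

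The omission is repairable with the tools already available --- if $N_P=T_\delta^k$ then $M_P\cong T_\delta^{k-1}$, which by Lemma \ref{lem:Mstabs} must be elementary abelian of order less than $t+1=2^a$; this forces $|T_\delta|\le q$, whence $|\mathcal{P}|=|T:T_\delta|^k>(q^4/2)^k$ while $|\mathcal{P}|<(t+1)^3\le q^{3k}$, a contradiction --- but you neither carry out nor flag this step, and your stated dichotomy is simply not exhaustive. For comparison, the paper avoids the issue entirely by a different arithmetic device: since $M_{P,\ell}=1$ (Lemma \ref{lem:Mstabs}), $M\cong\Sz(q)^{k-1}$ acts semiregularly on flags, so $|\Sz(q)|^{k-1}$ divides $(s+1)^2(s^2+1)$, and comparing odd parts yields $q-1<(q^2/2)^{1/3}$, false for $q\ge 8$. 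That route is uniform in $|B|$ and is the safer one here.
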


\begin{proof}
 Assume Hypothesis \ref{hyp:GQPA}.  By Lemma \ref{lem:prodstructonP}, $|\calP| = |\Delta|^k = |T:T_\delta|^k$ and $N_P \cong T_\delta^k$, and by Lemma \ref{lem:katmost4}, $k \le 4$.
 
 Suppose first that $k = 2$.  If $G \le (H_1 \times H_2) \rtimes \Sym(2)$, then define $Y:= G \cap (H_1 \times H_2)$.  Thus $G/Y \cong G_P/Y_P \cong C_2$, which implies that $Y_P^{\Gamma(P)}$ is primitive.  On the other hand, $N_P \cong T_\delta^2$, and each of $T_\delta \times 1$ and $1 \times T_\delta$ are normal subgroups of $Y_P$.  Since nontrivial normal subgroups of primitive groups are transitive, this implies that all lines incident with $P$ are in a block with the same first coordinate and the same second coordinate, i.e., all lines incident with $P$ are in the same block, a contradiction to local primitivity.  Hence $k \neq 2$.
 
 Suppose now that $k = 3$.  Since $\Gamma$ is $G$-locally primitive, the $t+1$ lines incident with a point $P$ are in different blocks.  By Lemma \ref{lem:Hamming}, the $t+1$ blocks containing the lines incident with $P$ are all at Hamming distance $1$, $2$, or $3$ apart.  Assume first that they are all at Hamming distance $3$.  This implies that $|T:T_\epsilon| = |\Sigma| \ge t+1$, which means that, if $B'$ is the block containing the line $\ell$, then $$|\mathcal{L}| = |T:T_\epsilon|^3|N_{B'}:N_\ell| \ge (t+1)^3,$$ a contradiction to Lemma \ref{lem:Pbounds}(ii).   

Suppose next that two lines incident with $P$ are in blocks Hamming distance $1$ apart.  Without a loss of generality, let these two lines be  $\ell$ in $(\epsilon, \epsilon, \epsilon)$ and $\ell_2 \in B_2 = (\epsilon, \epsilon, \epsilon_1)$, where $\epsilon_1 \neq \epsilon \in \Sigma.$  There are two possibilities: either every line incident with $P$ is in a block of the form $(\epsilon, \epsilon, \epsilon')$, $\epsilon' \neq \epsilon$; or some line incident with $P$ is in a block one of whose first two coordinates is not $\epsilon$.  Consider the former case first.  Since no two lines are in the same block, and $P$ is incident with $t+1$ lines, we have $|T:T_\epsilon| = |\Sigma| \ge t+1$, a contradiction as above.  Now consider the second case.  Let $\ell_3$ be incident with $P$, and without a loss of generality we may assume that $\ell_3 \in B_3 = (\epsilon', \epsilon, \epsilon)$, where $\epsilon' \neq \epsilon$.  However, the Hamming distance between $B_2$ and $B_3$ is $1$, a contradiction.

Now, suppose that any two lines incident with $P$ are in blocks Hamming distance $2$ apart.  Without a loss of generality, let two of these lines be $\ell$ in $(\epsilon, \epsilon, \epsilon)$ and $\ell_2 \in B_2 = (\epsilon, \epsilon_2, \epsilon_3)$, where the $\epsilon_i \neq \epsilon \in \Sigma.$  As above, there are two possibilities: either every line incident with $P$ is in a block of the form $(\epsilon, \epsilon', \epsilon'')$, where $\epsilon', \epsilon'' \neq \epsilon$; or some line incident with $P$ is in block whose first coordinate is not $\epsilon$.  Consider the former case first, and let $\ell_3 \in B_3 = (\epsilon, \epsilon', \epsilon'')$, where $\epsilon', \epsilon'' \neq \epsilon$.  Since the Hamming distance between $B_2$ and $B_3$ is $1$, and they are both in blocks with an $\epsilon$ in the first coordinate, $\epsilon' \neq \epsilon_2$.  Hence no two lines incident with $P$ are in blocks whose second coordinate is the same, which as above implies that $|T:T_\epsilon| = |\Sigma| \ge t+1$, a contradiction for the same reasons as above.  Finally, we consider the second case, and without a loss of generality, we have $P$ incident with $\ell_3 \in B_3 = (\epsilon', \epsilon, \epsilon'')$, where $\epsilon', \epsilon'' \neq \epsilon.$  Since $B_2$ and $B_3$ must be Hamming distance $2$ apart, and they differ in the first two coordinates, this forces $\epsilon_3 = \epsilon''$.  However, this implies that there can be no other line incident with $P$ in a block whose first coordinate is $\epsilon$: the third coordinate would have to differ from $\epsilon_3$ to be concurrent with $\ell_2$, but it must be $\epsilon_3$ to be concurrent with $\ell_3$.  Similarly, there is no other line whose second coordinate is $\epsilon$: its third coordinate would have to differ from $\epsilon_3$ to be concurrent with $\ell_2$, but it must be $\epsilon_3$ to be concurrent with $\ell_3$.  The only remaining possibility is that there are lines in blocks whose last coordinate is $\epsilon$.  Suppose $\ell_4 \in (\epsilon_4, \epsilon_5, \epsilon)$, where $\epsilon_4, \epsilon_5 \neq \epsilon$.  For this line to be concurrent with $\ell_2$ and $\ell_3$, we must have $\epsilon_4 = \epsilon'$ and $\epsilon_5 = \epsilon_2$.  Hence there can be no other lines in a block whose last coordinate is $\epsilon$, and thus $t+1 \le 4$, a contradiction, since no generalized quadrangle with $t \le 3$ satisfies Hypothesis \ref{hyp:GQPA}.  Hence $k \neq 3$.

We are left now only with the case $k = 4$.  We proceed largely as in the $k = 3$ case, although the analysis is a bit more tedious.  Since $\Gamma$ is $G$-locally primitive, the $t+1$ lines incident with a point $P$ are in different blocks.  By Lemma \ref{lem:Hamming}, the $t+1$ blocks containing the lines incident with $P$ are all at Hamming distance $1$, $2$, $3$, or $4$ apart.  Assume first that they are all at Hamming distance $4$.  This implies that $|T:T_\epsilon| = |\Sigma| \ge t+1$, which means that, if $B'$ is the block containing the line $\ell$, then $$|\mathcal{L}| = |T:T_\epsilon|^4|N_{B'}:N_\ell| \ge (t+1)^4,$$ a contradiction to Lemma \ref{lem:Pbounds}(ii).

Suppose next that two lines incident with $P$ are at Hamming distance $1$ apart.  Without loss of generality, let these two lines be  $\ell$ in $(\epsilon, \epsilon, \epsilon, \epsilon)$ and $\ell_2 \in B_2 = (\epsilon, \epsilon, \epsilon, \epsilon_1)$, where $\epsilon_1 \neq \epsilon \in \Sigma.$  There are two possibilities: either every line incident with $P$ is in a block of the form $(\epsilon, \epsilon, \epsilon, \epsilon')$, $\epsilon' \neq \epsilon$; or some line incident with $P$ is in a block one of whose first three coordinates is not $\epsilon$.  Consider the former case first.  Since no two lines are in the same block, and $P$ is incident with $t+1$ lines, we have $|T:T_\epsilon| = |\Sigma| \ge t+1$, a contradiction as in the Hamming distance $4$ case.  Now consider the second case.  Let $\ell_3$ be incident with $P$, and without a loss of generality we may assume that $\ell_3 \in B_3 = (\epsilon', \epsilon, \epsilon, \epsilon)$, where $\epsilon' \neq \epsilon$.  However, the Hamming distance between $B_2$ and $B_3$ is $2$, a contradiction.

Now suppose that two lines incident with $P$ are at Hamming distance $2$ apart.  Without loss of generality, let these two lines be $\ell$ in $(\epsilon, \epsilon, \epsilon, \epsilon)$ and $\ell_2$ in $(\epsilon, \epsilon, \epsilon_1, \epsilon_2)$, $\epsilon_1, \epsilon_2 \neq \epsilon$.  Now, if either all lines incident with $P$ are in blocks whose first coordinate is $\epsilon$ or all lines incident with $P$ are in blocks whose second coordinate is $\epsilon$, then this case reduces to the $k = 3$, Hamming distance $1$ case, a contradiction.  Hence, for any given coordinate, we may find two lines incident with $P$ in blocks that differ in that coordinate.  Without loss of generality, let a third line $\ell_3$ incident with $P$ be in $(\epsilon_3, \epsilon, \epsilon, \epsilon_4)$, $\epsilon_3, \epsilon_4 \neq \epsilon$.  Since $\ell_3$ and $\ell_2$ are both incident with $P$, their respective blocks are at Hamming distance $2$ as well, and so $\epsilon_4 = \epsilon_2$, and $\ell_3$ is in the block $(\epsilon_3, \epsilon, \epsilon, \epsilon_2)$.  Since not all lines incident with $P$ are in a block with second coordinate $\epsilon$, there must exist a line $\ell_4$ in a block either of the form $(\epsilon, \epsilon_4, \epsilon_5, \epsilon)$ or $(\epsilon, \epsilon_4, \epsilon, \epsilon_5)$, $\epsilon_4, \epsilon_5 \neq \epsilon$.  Now, $\ell_4$ cannot be in a block of the form $(\epsilon, \epsilon_4, \epsilon_5, \epsilon)$, since this would imply $\ell_3$ and $\ell_4$ are in blocks at Hamming distance $4$.  Thus $\ell_4$ must be in a block of the form $(\epsilon, \epsilon_4, \epsilon, \epsilon_5)$.  Since $\ell_4$ is in a block at Hamming distance $2$ from the block containing $\ell_2$, we have $\epsilon_5 = \epsilon_2$, and so $\ell_4$ is in the block $(\epsilon, \epsilon_4, \epsilon, \epsilon_2)$.  Finally, we consider a fifth line $\ell_5$ incident with $P$.  It cannot be in a block of the form $(\epsilon_5, \epsilon_6, \epsilon, \epsilon)$, $(\epsilon, \epsilon_5, \epsilon_6, \epsilon)$, or $(\epsilon_5, \epsilon, \epsilon_6, \epsilon)$, $\epsilon_5, \epsilon_6 \neq \epsilon$, since then $\ell_5$ is at Hamming distance $4$ from $\ell_2$, $\ell_3$, or $\ell_4$, respectively.  If $\ell_5$ is in a block of the form $(\epsilon, \epsilon, \epsilon_5, \epsilon_6)$, where $\epsilon_5, \epsilon_6 \neq \epsilon$, then we have $\epsilon_5 \neq \epsilon_1$ and $\epsilon_6 \neq \epsilon_2$ so that $\ell_5$ and $\ell_2$ are at Hamming distance $2$; however, this means $\ell_5$ and $\ell_3$ are at Hamming distance $3$, a contradiction.  If $\ell_5$ is in a block of the form $(\epsilon, \epsilon_5, \epsilon, \epsilon_6)$, where $\epsilon_5, \epsilon_6 \neq \epsilon$, then we have $\epsilon_5 \neq \epsilon_4$ and $\epsilon_6 \neq \epsilon_2$ so that $\ell_5$ and $\ell_4$ are at Hamming distance $2$; however, this means $\ell_5$ and $\ell_2$ are at Hamming distance $3$, a contradiction.  Finally, if $\ell_5$ is in a block of the form $(\epsilon_5, \epsilon, \epsilon, \epsilon_6)$, where $\epsilon_5, \epsilon_6 \neq \epsilon$, then we have $\epsilon_5 \neq \epsilon_3$ and $\epsilon_6 \neq \epsilon_2$ so that $\ell_5$ and $\ell_3$ are at Hamming distance $2$; however, this means $\ell_5$ and $\ell_2$ are at Hamming distance $3$, a contradiction.  Hence, $t+1 \le 4$, a contradiction since no generalized quadrangle with $t \le 3$ satisfies Hypothesis \ref{hyp:GQPA}.

Finally, we suppose that two lines incident with $P$ are in blocks at Hamming distance $3$ apart.  Without loss of generality, let these two lines be $\ell$ in $(\epsilon, \epsilon, \epsilon, \epsilon)$ and $\ell_2$ in $(\epsilon, \epsilon_1, \epsilon_2, \epsilon_3)$, $\epsilon_1, \epsilon_2, \epsilon_3 \neq \epsilon$.  Again, if all lines incident with $P$ are in blocks whose first coordinate is $\epsilon$, then we have a contradiction as in the Hamming distance $4$ case.  Without loss of generality, we may assume that $\ell_3$ incident with $P$ is in block $(\epsilon_4, \epsilon, \epsilon_5, \epsilon_3)$, where $\epsilon_4, \epsilon_5 \neq \epsilon$ and $\epsilon_5 \neq \epsilon_2$.  Now, assume that there is another line $\ell^\prime$ incident with $P$ in a block whose first coordinate is $\epsilon$, say the block $(\epsilon, \epsilon_6, \epsilon_7, \epsilon_8)$, where $\epsilon_6, \epsilon_7, \epsilon_8 \neq \epsilon$ and $\epsilon_6 \neq \epsilon_1$, $\epsilon_7 \neq \epsilon_2$, and $\epsilon_8 \neq \epsilon_3$.  Since $\ell^\prime$ and $\ell_3$ are in blocks at Hamming distance $3$, this implies that $\ell_7 = \ell_5$.  Any other line $\ell^{\prime\prime}$ incident with $P$ in a block whose first coordinate is $\epsilon$ would also have third coordinate $\epsilon_5$, implying the Hamming distance between $\ell^\prime$ and $\ell^{\prime\prime}$ is less than $3$, a contradiction.  Hence, at most two lines incident with $P$ have first coordinate $\epsilon$.  Indeed, an analogous argument shows that at most two lines other than $\ell$ incident with $P$ are in a block with any single fixed coordinate equal to $\epsilon$, which shows that $t+1 \le 2\cdot4 + 1 = 9$.  Since $t \le 8$, we have
\[|\Sigma|^4 = |\calP| = (s+1)(st+1) \le (t+1)(t^2 + 1) \le 585, \]
and so $|\Sigma| < 5$.  However, the finite simple group $T$ must have a permutation representation on $|\Sigma|$ points, and no finite simple group has a permutation representation on fewer than five points, a contradiction.  Therefore, $G$ cannot act with type $\{\PA, \PA \}$.    
\end{proof}

We can now prove the following result, which characterizes locally $(G,2)$-transitive generalized quadrangles such that $G$ is quasiprimitive on both points and lines.

\begin{thm}
 \label{thm:quasi2}
 Let $\mathcal{Q}$ be a thick locally $(G,2)$-transitive generalized quadrangle such that $G$ is quasiprimitive on both points and lines.  Then, $G$ is an almost simple group.  
\end{thm}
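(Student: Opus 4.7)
The plan is to conclude Theorem \ref{thm:quasi2} immediately from Lemma \ref{lem:possactions} together with the case-elimination results already established in this section. Since $\calQ$ is a thick generalized quadrangle and $G$ is its collineation group acting on the incidence graph $\Gamma$, any element of $G$ that fixes every point must also fix every line (a line is determined by any two of its points), so $G$ acts faithfully on each of its two vertex-orbits $\calP$ and $\calL$. By hypothesis, $G$ is quasiprimitive on both biparts, and $\Gamma$ is locally $(G,2)$-arc-transitive, so Lemma \ref{lem:possactions} applies.

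Accordingly, if $\{X,Y\}$ denotes the unordered pair of quasiprimitive types of $G$ on $\calP$ and on $\calL$, then either $X = Y \in \{\HA, \TW, \AS, \PA\}$, or $\{X,Y\} = \{\SD, \PA\}$. I would then eliminate every case except $\{\AS,\AS\}$ by appealing to the results already proved: Theorem \ref{thm:QPonboth1} rules out $\{\HA,\HA\}$, $\{\TW,\TW\}$, and $\{\SD,\PA\}$, while Theorem \ref{thm:notPA} rules out $\{\PA,\PA\}$. The only surviving possibility is $\{\AS,\AS\}$, which is exactly the statement that $G$ is almost simple.

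Since all the work is absorbed into previously proved statements, there is essentially no obstacle left here; the proof is a one-paragraph bookkeeping argument. The only subtlety worth a sentence is verifying the faithfulness hypothesis of Lemma \ref{lem:possactions}, but as noted above this is automatic for a thick generalized quadrangle. If the pair $\{X,Y\}$ had turned out to be asymmetric (say $\{\AS,\PA\}$), one would have needed an extra argument, but Lemma \ref{lem:possactions} already forbids all such asymmetric pairs apart from $\{\SD,\PA\}$, which is separately excluded.
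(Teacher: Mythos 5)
Your proposal is correct and matches the paper's own proof essentially verbatim: the paper also cites Lemma \ref{lem:possactions} to restrict the possible pairs of quasiprimitive types and then invokes Theorems \ref{thm:QPonboth1} and \ref{thm:notPA} to eliminate everything except $\{\AS,\AS\}$. Your extra sentence verifying the faithfulness hypothesis is a reasonable (if routine) addition that the paper leaves implicit.
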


\begin{proof}
 By Lemma \ref{lem:possactions}, if the incidence graph $\Gamma$ is locally $(G,2)$-arc-transitive and $G$ is quasiprimitive on both $\mathcal{P}$ and $\mathcal{L}$, if $G$ acts with quasiprimitive types $\{X,Y\}$, then either $X = Y \in \{\HA, \TW, \AS, \PA \}$ or $\{X,Y\} = \{\SD, \PA\}$.  The result follows from Theorem \ref{thm:QPonboth1} 
 and Theorem \ref{thm:notPA}.
\end{proof}

Finally, we can now prove Theorem \ref{thm:main_rephrased}. 

\begin{thm}
 \label{thm:main_rephrased}
 If $\mathcal{Q}$ is a thick locally $(G,2)$-transitive generalized quadrangle, then one of the following holds:
 \begin{enumerate}[(i)]
  \item $\mathcal{Q}$ has order $(3,5)$ or $(5,3)$. 
  \item $G$ is an almost simple group that is quasiprimitive on both points and lines.
 \end{enumerate}
\end{thm}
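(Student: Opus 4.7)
The plan is to combine the three preceding structural theorems to obtain a clean trichotomy, then collapse two of the branches via duality into the stated dichotomy.

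First I would invoke Theorem \ref{thm:quasi0poss}, which guarantees that $G$ must act quasiprimitively on at least one of $\mathcal{P}$ and $\mathcal{L}$. This is the key input that rules out the ``doubly imprimitive'' case and reduces the classification to two possibilities: $G$ is quasiprimitive on exactly one of the biparts, or $G$ is quasiprimitive on both.

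In the first case, without loss of generality (exchanging $\mathcal{P}$ and $\mathcal{L}$, i.e., passing to the dual generalized quadrangle if necessary) we may assume $G$ is quasiprimitive on $\mathcal{P}$ but not on $\mathcal{L}$. Then Theorem \ref{thm:quasi1} applies directly and forces $\mathcal{Q}$ to be the unique generalized quadrangle of order $(3,5)$. Undoing the duality swap allows for the possibility that the original $\mathcal{Q}$ was the dual of this, i.e., of order $(5,3)$, which accounts for conclusion (i) of the theorem.

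In the second case, $G$ is quasiprimitive on both $\mathcal{P}$ and $\mathcal{L}$ simultaneously, and Theorem \ref{thm:quasi2} immediately yields that $G$ is almost simple, giving conclusion (ii). Since these two cases are exhaustive by Theorem \ref{thm:quasi0poss}, the result follows. There is essentially no obstacle here: the entire content has been distilled into the three cited theorems, and the only point worth being careful about is the duality argument in the first case, which must justify that replacing $\mathcal{Q}$ by its dual preserves the hypothesis of thick local $(G,2)$-transitivity (it does, since the incidence graph and its locally $2$-arc-transitive action are invariant under the bipart-swap).
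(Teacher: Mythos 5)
Your proposal is correct and follows exactly the same route as the paper: cite Theorem \ref{thm:quasi0poss} to exclude the case where $G$ is quasiprimitive on neither bipart, then apply Theorem \ref{thm:quasi1} (with the duality swap giving orders $(3,5)$ and $(5,3)$) and Theorem \ref{thm:quasi2} to the remaining two cases. Your extra remark justifying that passing to the dual preserves local $(G,2)$-transitivity is a harmless and welcome bit of care that the paper leaves implicit.
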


\begin{proof} 
 If $\mathcal{Q}$ is a thick locally $(G,2)$-transitive generalized quadrangle, then one of the following holds: $G$ is not quasiprimitive on either points nor lines; up to duality, $G$ is quasiprimitive on points but not on lines; or $G$ is quasiprimitive on both points and lines.  The result then follows by Theorems \ref{thm:quasi0poss}, \ref{thm:quasi1}, and \ref{thm:quasi2}.
\end{proof}

\section{Imprimitive on both points and lines}
\label{sect:imprim}

The purpose of this section is to prove that, if $\calQ$ is a locally $(G,2)$-transitive generalized quadrangle, then $G$ must be primitive on either points or lines.

\begin{prop}
 \label{prop:ASst+1}
Assume that $G$ is an almost simple group and $\calQ$ is a thick locally $(G,2)$-transitive generalized quadrangle.  Then $G$ cannot stabilize a partition of either points or lines with blocks of size $st+1$. 
\end{prop}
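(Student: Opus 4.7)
The plan is to assume for contradiction that $G$ stabilizes a partition $\calB$ of $\calP$ into blocks of size $st+1$ (the line case follows dually) and to exploit the $2$-transitive action this forces on $\calB$. My first step would be to show that each block $B$ is an ovoid of $\calQ$: adapting the counting from the proof of Lemma \ref{lem:Brestrictions}, no two distinct points of $B$ can be collinear, since $G_P$ preserves $B$ and acts transitively on the $s(t+1)$ points of $P^\perp\setminus\{P\}$, so if any one such point lay in $B$ then all of them would, contradicting $s(t+1)>st=|B\setminus\{P\}|$ for thick $\calQ$. Hence $B$ is an independent set of the ovoid size $st+1$, and every line meets every block in exactly one point. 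For any line $\ell$, the resulting natural bijection between the $s+1$ points of $\ell$ and the blocks in $\calB$ is $G_\ell$-equivariant; combined with the fact that $G_\ell$ acts $2$-transitively on the points of $\ell$ (by local $2$-arc-transitivity), this yields a $2$-transitive action of $G_\ell$, hence of $G$, on $\calB$.

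Next I would locate $T:=\soc(G)$. By Theorem \ref{thm:main_rephrased}, either $\calQ$ has order $(3,5)$ or $(5,3)$, or $G$ is quasiprimitive on both $\calP$ and $\calL$. In the exceptional cases the full collineation group is $2^6{:}(3.A_6.2)$, and an order check shows that no almost simple subgroup has order divisible by $|\calP|\in\{64,96\}$, so the hypothesis that $G$ is almost simple forces $T$ transitive on both $\calP$ and $\calL$. Let $K$ denote the kernel of $G$ acting on $\calB$. Since $T$ is simple and $T\cap K\lhd T$, either $T\le K$ or $T\cap K=1$; the former makes $T$ intransitive on $\calP$, contradicting quasiprimitivity, so $T\cap K=1$ and $T\cong TK/K$ is a nontrivial normal subgroup of the $2$-transitive group $G/K$. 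As $T$ is nonabelian simple it cannot be the regular elementary abelian socle of an affine $2$-transitive group, so $T$ is itself the socle of an almost simple $2$-transitive group on $\calB$ and acts faithfully $2$-transitively on the $s+1$ blocks.

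By CFSG, the pair $(T,s+1)$ then lies in the short list of nonabelian simple groups with a $2$-transitive action: $(A_n,n)$, $(\PSL_d(q),(q^d-1)/(q-1))$, $(\PSU_3(q),q^3+1)$, $(\Sz(q),q^2+1)$, $(\Ree(q),q^3+1)$, $(\PSp_{2d}(2),2^{2d-1}\pm 2^{d-1})$, together with a handful of sporadic actions. For each family I would then check whether $T$ can simultaneously act transitively on $\calP$ of size $(s+1)(st+1)$ with $\calB$ preserved, which amounts to asking that the $2$-transitive block stabilizer $T_B$ (known explicitly in each family) contain a subgroup $T_P$ of index exactly $st+1$ whose coset action extends to a GQ structure. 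The GQ constraints from Lemma \ref{lem:GQbasics} (especially the inequalities $t\le s^2$, $s\le t^2$, and the divisibility $s+t\mid st(s+1)(t+1)$) together with the point-count estimates of Lemma \ref{lem:Pbounds} eliminate most candidates on divisibility or size grounds, and the remaining arithmetic possibilities are cross-checked against the explicit subgroup structure of $T_B$ available in Kleidman--Liebeck.

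The main obstacle I anticipate is the $A_n$ and $\PSL_d(q)$ families, where $T_B$ is large and has many subgroups of moderate index, so additional work is required (for instance examining the induced $2$-transitive action of $G_P$ on the $t+1$ lines through $P$, or invoking Lemma \ref{comp-factor} to restrict composition factors of $G_P$) to show that no admissible index-$(st+1)$ subgroup of $T_B$ produces a GQ. The remaining families are more rigid and should fall quickly to arithmetic constraints on $(s+1)(st+1)$ and $(t+1)(st+1)$ dividing $|G|$.
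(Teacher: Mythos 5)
Your opening moves match the paper's: you correctly show that the blocks are ovoids (the paper works with the dual, spreads of lines), that every line meets every block exactly once, and that the induced action of $G$ on the set $\calB$ of blocks is $2$-transitive, which by CFSG puts $\soc(G)$ on the short list of socles of $2$-transitive groups. One small over-claim along the way: from $T\cap K=1$ you may conclude that $G/K$ is an almost simple $2$-transitive group with socle isomorphic to $T$, but not that $T$ itself acts $2$-transitively on $\calB$ (compare $\PSL(2,8)\lhd\PGammaL(2,8)$ on $28$ points); this does not derail the reduction, since the CFSG list is a list of socles.

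The genuine gap is in how the case analysis is to be carried out, which is the bulk of the proof. You propose to search, inside the known block stabilizer $T_B$, for a subgroup $T_P$ of index $st+1$ compatible with the GQ arithmetic, and you yourself flag that for $A_n$ and $\PSL_d(q)$ the subgroup $T_B$ has far too many subgroups of moderate index for this to be a finite check. The paper's proof supplies exactly the missing tool: by the Frattini argument $G=G_BG_P$, and enlarging $G_P$ to a maximal subgroup $G_C$ gives a \emph{maximal factorization} $G=G_BG_C$ in which $G_B$ is a point stabilizer of a $2$-transitive action; the Liebeck--Praeger--Saxl classification of maximal factorizations of almost simple groups then pins $G_C$ down to an explicit short list in every case. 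Two further observations make each case a finite arithmetic verification: because the blocks are spreads (resp.\ ovoids), the maximal overgroup $G_C$ must \emph{itself} act $2$-transitively on $\calB$, and the inequality $|G_C|\ge|G_P|\,|G_C{:}G_P|\ge s(t+1)(s+1)>|G{:}G_C|$ forces $T_C$ to be a large subgroup, so that in the alternating case one can invoke Alavi--Burness's classification of large maximal subgroups. Without the factorization machinery (or an equivalent substitute), your plan does not reduce the $A_n$, $\PSL_d(q)$, and $\Sp_{2d}(2)$ families to a checkable list, so as written the proof is incomplete precisely where the real work lies.
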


\begin{proof}
 Let $\calQ$ be a locally $(G,2)$-transitive generalized quadrangle, where $G$ is an almost simple group.  Without loss of generality, assume that $\mathcal{B}$ is a block system on  $\calL$, let $B \in \mathcal{B}$, and assume $|B| = st+1$.  This means $|G:G_B| = t + 1$, and $G$ has a $2$-transitive action on $\mathcal{B}$.  Since the size of each block is $st+1$, each block is a spread, and hence each point of $\calQ$ is incident with exactly one line in each block.  Moreover, if $P \in \calP$, $G_P$ has a $2$-transitive action on $t+1$ elements, and $G_P$ is $2$-transitive on $\mathcal{B}$.  By the Frattini Argument, $G = G_B G_P$ is a factorization of $G$, that is, every element of $G$ can be written as $xy$ for some $x \in G_B$ and $y \in G_P$.  We may assume that $G_P \le G_C < G$, where $C$ is a maximal block of points (possibly of size $1$) and $G_C$ is maximal in $G$.  (Note that, since $B$ is a spread, both $G_P$ and $G_C$ must have $2$-transitive actions on $\calB$.)  In particular, this implies that $G = G_B G_C$ is a maximal factorization, i.e., a factorization where each of the groups in the factorization is a maximal subgroup, and $G_B$ is the stabilizer of an element in a $2$-transitive action of $G$ on $t+1$ elements.  By the CFSG, the possibilities for such a $2$-transitive group $G$ with element stabilizer $G_B$ are known.  Furthermore, by \cite{maxfactor}, the possibilities for $G_C$ are also known.  Henceforth in this proof, let $T:= \soc(G)$ and let $G = T.A$, where $A \le \Out(T)$. 
 
 Consider first the case when $T = A_{t+1}$, the alternating group of degree $t+1$.  By \cite[Theorem 1.2(b)]{primGQ}, if $G$ is primitive on $\calP$, then $G \le S_6$ and $\calQ$ is the unique generalized quadrangle of order $(2,2)$.  However, there is no such system of imprimitivity in this case, so we may assume that $G$ is not primitive on points, i.e., we may assume that $G_P < G_C$.  We will assume henceforth that $t+1 > 6$, since the cases $t+1 = 5$ and $t+1 = 6$ are ruled out by inspection.  By Lemma \ref{lem:Brestrictions}, $|G_C:G_P| \ge s+1$, and, since $|G_P| \ge s(t+1)$, we have 
 \[ |G_C| = |G_P||G_C:G_P| \ge s(t+1)(s+1) > (s+1)(st+1) = |G:G_P| > |G:G_C|.\]
Hence $|G| < |G_C|^2$, and, since $t + 1 > 6$, we have $|T| < |T_C|^2 |A| \le 2|T_C|^2 < |T_C|^3,$ and so $T_C$ must be a large subgroup of $T$.  Moreover, $G_C$ is $2$-transitive on the $t+1$ cosets of $G_B$ in $G$, and hence $T_C$ is a large primitive maximal subgroup of $A_{t+1}$.  These are explicitly known by \cite[Theorem 2]{largesubs}.  Furthermore, we know additionally that $|T| < 2|T_C|^2$, $G_C$ has a $2$-transitive action on $t+1$ elements, and $G_P < G_C$ also has a $2$-transitive action on $t+1$ elements with $|G_C:G_P| \ge s+1$ and $s > 2$.  The remaining possibilities for $(t+1, T_C, T_P)$ are listed in Table \ref{st+1Alt}, and, in each case, we may use the equation

\[ (s+1)(st + 1) = |\calP| = |T:T_P|\]

to solve for $s$.

\begin{center}
\begin{table}[ht]
\caption{Remaining possibilities for $(t+1, T_C, T_P)$, in the case $T=A_{t+1}$.}\label{st+1Alt}
\begin{tabular}{l|l|l|l|l}
\toprule
$t+1$ & $T_C$ &$T_P$ &$|\calP|$ &$s$ \\
\midrule
$7$ &  $\PSL(2,7)$ & $\ASL(1,7)$ & $120$ & $\notin \N$  \\
$8$ & $\AGL(3,2)$ &   $\PSL(3,2)$ & $120$ & $\notin \N$  \\
$8$ &  $\AGL(3,2)$ &  $\AGammaL(1,8)$ & $120$ & $\notin \N$   \\
$8$ &  $\AGL(3,2)$ &  $\AGL(1,8)$ & $360$ & $\notin \N$   \\
$8$ &  $\AGL(3,2)$ &  $\AGL(1,8)$ & $360$ & $\notin \N$   \\
$11$&  $M_{11}$    &  $\PSL(2,11)$ & $30240$ & $\notin \N$  \\
$12$&  $M_{12}$    &  $M_{11}$    &  $30240$ & $\notin \N$  \\
$12$&  $M_{12}$    &  $\PSL(2,11)$    &  $362880$ & $\notin \N$  \\
\bottomrule
\end{tabular}
\end{table}
\end{center}
Hence, it is impossible for $T = A_{t+1}$.

Next, we consider the case when $T = \PSL(n,q)$.  In this case, $t + 1 = (q^n - 1)/(q-1)$ and $G_B \cong P_1$ (a maximal parabolic subgroup).  By \cite{maxfactor}, there are a few possibilities for $G_C$, where $G_P \le G_C$ and $G_C$ is maximal in $G$.  First, we could have $T_C \cong \frac{1}{\gcd(n,q-1)} \GL(a,q^b).b$, where $ab = n$ and $b$ is prime.  However, since $G_P \le G_C$ and $G_P$ is $2$-transitive on $t+1$ elements, $|G_C|$ must be divisible by a primitive prime divisor of $q^n - 1$ (unless $n = 6$ and $q = 2$).  Moreover, since $G_P$ has a $2$-transitive action on $t+1$ elements, $|G_C|$ is also divisible by $t = (q^n - q)/(q-1)$, and so $|G_C|$ must also be divisible by a primitive prime divisor of $q^{n-1} -1$, which it is not by \cite[Main Theorem]{GPPS}. Hence we must have $q = 2$ and $n = 6$.  This means $t + 1 = 63$, but $\PSL(6,2)$ does not contain a proper subgroup with a $2$-transitive action on $63$ points, ruling this case out.

We now suppose $T_C \cong \PSp(n,q)$, where $n$ is even and at least $4$.  We know that $|G_C|$ must be divisible by a primitive prime divisor of $q^n - 1$; on the other hand, since $G_P$ has a $2$-transitive action on $t+1$ elements, $|G_C|$ is also divisible by $t = (q^n - q)/(q - 1)$, and so $|G_C|$ must also be divisible by a primitive prime divisor of $q^{n-1} - 1$, which it is not by \cite[Main Theorem]{GPPS}, ruling out this subcase.

The only remaining cases have $n = 2$, and so $t = q$.  However, in each of these cases $q \le 7$, while $T_C \le S_5$, ruling these cases out.  Hence it is impossible for $T = \PSL(n,q)$.

By \cite{maxfactor}, there are no maximal factorizations of either $\Sz(q)$ or $\Ree(q)$, so $T$ cannot be either of these.

Next, we consider the case $T = \PSU(3,q)$.  In this case, $G_B = P_1$ and $t + 1 = q^3 + 1$.  Moreover, by \cite{maxfactor}, the only maximal factorizations occur when $q \in \{3,5,8\}$.  However, in none of these cases is there an integer solution to \[q^3 s + 1 = st + 1 = |G:G_C|,\] and so it is impossible for $T = \PSU(3,q)$.

The next case we consider is $G = T = \Sp(2n,2)$ for some $n \ge 2$, where $G_B = \POmega^-(2n, 2)$ and $t+1 = 2^{2n-1} - 2^{n-1}$.  Of the possibilities listed in \cite[Tables 1--3]{maxfactor}, we consider first the subcase when $G_C = \Sp(2a, 2^b).b$, where $ab = n$ and $b$ is prime.  Since $G_C$ does not have a $2$-transitive action on $t+1$ elements, this case is ruled out immediately.

Suppose next that $G_C = P_k$, a parabolic subgroup, where
\[ P_k = [2^{2nk - \frac{3k^2 - k}{2}}]\colon (\GL(k,2) \circ \Sp(2n-2k,2))\]
and $1 \le k < n$.  In this situation $G_C$ can only act $2$-transitively on $2^k$, $2^{2n-2k}$, $2^k - 1$, or $2^{2n - 2k - 1} \pm 2^{n - k - 1}$ points, none of which equals $t+1$, a contradiction.  For a similar reason, we may also rule out $G_C = \Sp(n,2) \Wr S_2$ when $n$ is even and $G_C = \POmega^+(2n,2)$. 

If $G_C = G_2(2) \cong \PSU(3,3).2$ and $n = 3$, then $t + 1 = 28$.  Since the only proper subgroup of $\PSU(3,3).2$ with a $2$-transitive action on $28$ points is $\PSU(3,3)$ and $s > 2$, we must have $G_P = G_C$.  This means 
\[(s+1)(28s +1) = |\calP| = |G:G_P| = 120,\]
a contradiction since $s$ is an integer, ruling this case out.

The final subcase is $n = 4$ and $G_C = S_{10}$.  In this case, $t+1 = 120$.  While $S_{10}$ has a primitive action on $120$ points, it is not $2$-transitive, ruling this case out.  Hence we cannot have $G = T = \Sp(2n,2)$ and $G_B = \POmega^-(2n, 2)$.

We next consider the case when $G = T = \Sp(2n,2)$ for some $n \ge 2$, $G_B = \POmega^+(2n, 2)$, and $t+1 = 2^{2n-1} + 2^{n-1} = 2^{n-1}(2^n + 1)$.  We proceed as in the previous case through the possibilities listed in \cite[Tables 1--3]{maxfactor}.  The cases when $G_C = \Sp(2a, 2^b).b$ or $G_C = \POmega^-(2n,2)$ are ruled out since these choices of $G_C$ do not have a $2$-transitive action on $t+1$ elements.  If $n = 3$ and $G_C = G_2(2) \cong \PSU(3,3).2$, then $G_C$ does not have a $2$-transitive action on $t+1 = 36$ elements.  Finally, if $n = 4$ and $G_C = \PSL(2,17)$, then $G_C$ does not have a $2$-transitive action on $t+1 = 136$ elements.  Hence $T \not\cong \Sp(2n,2)$.

We now consider the sporadic almost simple $2$-transitive actions.  As above, in each case $G$ must contain a maximal subgroup $G_C$ (which, in this case, cannot be conjugate in $G$ to $G_B$) such that $G_C$ has a $2$-transitive action on $t+1$ elements.  We summarize the possibilities in Table \ref{sporadic}.

\begin{center}
\begin{table}[H]
\caption{Remaining possibilities for $(T,G_C)$.}\label{sporadic}
\begin{tabular}{l|l|l}
\toprule
$T$ & $t+1$ & Possible $G_C$  \\
\midrule
$\PSL(2,11)$ & $11$ & $\varnothing$\\
$M_{11}$     & $11$ & $\PSL(2,11)$\\
$M_{11}$     & $12$ & $\varnothing$\\
$M_{12}$     & $12$ & $M_{11}$, $\PSL(2,11)$, $\PGL(2,11)$\\
$A_7$        & $15$ & $\varnothing$\\
$M_{22}$     & $22$ & $\varnothing$\\
$M_{23}$     & $23$ & $\varnothing$\\
$M_{24}$     & $24$ & $\PSL(2,23)$\\
$\PGammaL(2,8)$ & $28$ & $\varnothing$\\
${\rm HS}$   & $176$ & $\varnothing$ \\
${\rm Co}_3$ & $276$ & $\varnothing$\\
\bottomrule
\end{tabular}
\end{table}
\end{center}

We deal now with the remaining cases.  If $G = T = M_{11}$, then $t+1 = 11$, $G_B = M_{10}$, and $G_C = \PSL(2,11)$.  Since $\PSL(2,11)$ has no proper subgroups with a $2$-transitive action on $11$ elements, we conclude that $G_P = G_C$.  However, this means that
\[(s + 1)(10s + 1) = |\calP| = |G:G_P| = 12,\]
a contradiction.

Consider now the cases when $T = M_{12}$ and $t+1 = 12$.  If both $G_B$ and $G_C$ are isomorphic to $M_{11}$, then, since \[|G:G_P| = (s+1)(11s + 1) > 12 = (t+1) = |G:G_C|,\] we must have $G_P < G_C$.  The only proper subgroup of $M_{11}$ with a $2$-transitive action on $12$ elements is $\PSL(2,11)$, so we conclude that $G_P \cong \PSL(2,11)$, and so
\[ (s+ 1)(11s + 1) = |\calP| = |G:G_P| = 144,\] which is a contradiction to $s \in \N$.  If $G_C = \PSL(2,11)$, then, since $\PSL(2,11)$ has no proper subgroups with a $2$-transitive action on $12$ elements, we conclude that $G_P = G_C$ and reach a contradiction as in the previous case.  Finally, if $G_C = \PGL(2,11)$, then $G = M_{12}.2$ and, since $s > 2$, we again conclude that $G_P = G_C$ and reach a contradiction as in the previous cases.  Hence $T \not\cong M_{12}$.

Finally, we consider the case when $G = T = M_{24}$, $t+1 = 24$, and $G_C = \PSL(2,23)$.  Since $G_C$ has no proper subgroups with a $2$-transitive action on $24$ elements, we conclude that $G_P = G_C$.  However,
\[ (s+1)(23s + 1) = |\calP| = |G:G_P| = 40320\]
implies that $s \notin \N$, a final contradiction.  Therefore, if $G$ is an almost simple group and $\calQ$ is a locally $(G,2)$-transitive generalized quadrangle, $G$ cannot stabilize a block system with blocks of size $st + 1$ on either points or lines.
\end{proof}

%

\begin{thm}
 \label{thm:prim}
Let $\calQ$ be a thick locally $(G,2)$-transitive generalized quadrangle.  Up to duality, $G$ is primitive on points. 
\end{thm}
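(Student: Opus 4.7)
The plan is to argue by contradiction: assume that $G$ is imprimitive on both $\calP$ and $\calL$, then combine Theorem~\ref{thm:main_rephrased} with the block-size restrictions from Lemma~\ref{lem:Brestrictions}, Proposition~\ref{prop:ASst+1}, and Lemma~\ref{lem:blocks=t} to force an immediate contradiction.

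First I would dispose of the exceptional quadrangle of order $(3,5)$ (and, dually, order $(5,3)$) directly from the data in Section~\ref{sect:background}: the full collineation group $G \cong 2^6{:}(3.A_6.2)$ has a regular normal elementary abelian subgroup of order $2^6$ on $\calP$, so $G$ acts on $\calP$ as a primitive group of type $\HA$. Thus in this case $G$ is primitive on points (respectively on lines, in the dual), and the theorem holds. By Theorem~\ref{thm:main_rephrased}, for every remaining thick locally $(G,2)$-transitive quadrangle $G$ is almost simple and quasiprimitive on both $\calP$ and $\calL$, and this is the situation I would tackle next.

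Assume therefore that $G$ is almost simple and imprimitive on both $\calP$ and $\calL$, and (by applying duality if necessary) that $s \le t$. Lemma~\ref{lem:Brestrictions} forces each block of points to have size either $st+1$ or $s+1$, the latter requiring $s \ge t$, hence $s=t$. Dually, the same lemma forces each block of lines to have size either $st+1$ or $t+1$, with the latter again requiring $s=t$. Since $G$ is almost simple, Proposition~\ref{prop:ASst+1} rules out blocks of size $st+1$ on either side. Thus we must have $s=t$ together with every point block of size $s+1$ and every line block of size $t+1 = s+1$.

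The final contradiction is now delivered by Lemma~\ref{lem:blocks=t}: since $s=t$ and $G$ preserves imprimitivity systems on both $\calP$ and $\calL$, that lemma asserts that, up to duality, each block of lines has size $s^2+1 = st+1$, directly contradicting the conclusion that all line blocks have size $s+1$ (or, equivalently, reintroducing a block size forbidden by Proposition~\ref{prop:ASst+1}). This completes the argument. The genuine obstacle of the theorem is bundled into Proposition~\ref{prop:ASst+1}, whose verification relies on CFSG together with the classification of maximal factorizations of almost simple groups; once that proposition is available, Theorem~\ref{thm:prim} reduces to straightforward bookkeeping on block sizes.
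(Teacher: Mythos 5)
Your proof is correct and follows essentially the same route as the paper's: reduce via Theorem~\ref{thm:main_rephrased} to the case that $G$ is almost simple, use Lemma~\ref{lem:Brestrictions} together with Proposition~\ref{prop:ASst+1} to force $s=t$ with small blocks on both sides, and then contradict this with Lemma~\ref{lem:blocks=t}. One tiny slip: the dual of Lemma~\ref{lem:Brestrictions} permits line-blocks of size $t+1$ whenever $s \mid t$, not only when $s=t$, but this is harmless since the point-block analysis already forces $s=t$.
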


\begin{proof}
 Let $\calQ$ be a locally $(G,2)$-transitive generalized quadrangle.  By Theorem \ref{thm:main}, we may assume up to duality that $\calQ$ is not the unique generalized quadrangle of order $(3,5)$, in which case $G$ will be primitive on points, or $G$ is an almost simple group acting quasiprimitively on both points and lines.  By Lemmas \ref{lem:Brestrictions} and \ref{lem:blocks=t}, if $G$ is imprimitive on both points and lines, we know that $G$ must stabilize a block system with blocks of size $st + 1$ on either points or lines, which is impossible by Proposition \ref{prop:ASst+1}.  The result follows.
\end{proof}

\section{Reduction to large point stabilizers in groups of Lie type}
\label{sect:large}

Let $\calQ$ be a locally $(G,2)$-transitive generalized quadrangle.  By Theorem \ref{thm:prim}, up to duality, we may assume that $G$ is primitive on points.  Furthermore, by Theorem \ref{thm:main_rephrased}, if $\calQ$ is not the unique generalized quadrangle of order $(3,5)$ or its dual, we may assume that $G$ is an almost simple group.  Let $T := \soc(G)$.  The purpose of this section is to show that $T$ is a simple group of Lie type, and, if $P \in \calP$, then $T_P$ is a \textit{large} subgroup of $T$, by which we mean $|T| < |T_P|^3$.  

\begin{lem}[{\cite[Theorem 1.2]{primGQ}}]
\label{lem:ASnotAn}
If $G$ acts flag-transitively and point-primitively on $\calQ$ and $\soc(G) = A_n$ with $n \ge 5$, then $G \le S_6$, $\soc(G) \cong A_6 \cong \PSL(2,9)$, and $\calQ$ is the unique generalized quadrangle of order $(2,2)$. 
\end{lem}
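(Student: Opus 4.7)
The plan is to use the maximality of $G_P$ together with the strict combinatorial constraints on the parameters $(s,t)$ to rule out all possibilities except the claimed one. First I would record the standing numerical inequalities: since $G$ is point-primitive, $G_P$ is a maximal subgroup of $G$; flag-transitivity forces $(t+1)\mid |G_P|$ and $(s+1)\mid|G_\ell|$, and by Lemma~\ref{lem:ratio}, $(s+1)|G_P|=(t+1)|G_\ell|$. Assuming without loss of generality $s\leqslant t$, Lemma~\ref{lem:Pbounds}(i) yields $|G:G_P|=|\calP|<(t+1)^3\leqslant|G_P|^3$, so $|G|<|G_P|^4$; the sharper bound $|G|<s(t+1)|G_P|^2$ follows from Lemma~\ref{lem:Pbounds}(ii). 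These will play the role of a ``large subgroup'' constraint in what follows.

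Next I would invoke the O'Nan--Scott description of maximal subgroups of $A_n$ (and $S_n$) to split $G_P$ into: (i) intransitive, $G_P=(S_k\times S_{n-k})\cap G$ with $1\leqslant k<n/2$, so $|\calP|=\binom{n}{k}$; (ii) imprimitive, $G_P=(S_a\Wr S_b)\cap G$ with $n=ab$ and $a,b\geqslant 2$, so $|\calP|=n!/((a!)^b\,b!)$; and (iii) $G_P$ primitive on $\{1,\dots,n\}$. For type (iii) I would apply Mar\'oti's bound $|G_P|<4^n$ (with a small list of Mathieu-type exceptions); combined with $|G|\geqslant n!/2$ and $|G|<|G_P|^4$, this forces $n$ to be at most an explicit small value, leaving finitely many candidates to be checked directly.

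The principal work lies in types (i) and (ii). For (i) I would combine the divisibility $(t+1)\mid|G_P|$, the GQ constraint from Lemma~\ref{lem:GQbasics}(ii) that $s+t$ divides $st(s+1)(t+1)$, and the bounds $t\leqslant s^2$, $s\leqslant t^2$ with the explicit $|\calP|=\binom{n}{k}$. The subcase $k=1$ (natural action on $n$ points) reduces to the Diophantine equation $(s+1)(st+1)=n$ with $s,t\geqslant 2$ and $t\leqslant s^2$, which admits no solution. For $k\geqslant 2$, $\binom{n}{k}$ grows much faster than $|G_P|^{1/3}$ once $n$ is large relative to $k$, so $|\calP|<|G_P|^3$ restricts $n$ to a bounded range. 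Type (ii) is handled analogously: $|G:G_P|$ is typically so much larger than $|G_P|^3$ that $|G|<|G_P|^4$ already fails outside a very short list.

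The main obstacle will be the intransitive case for moderate $n$ and small $k$: neither the growth argument nor elementary divisibility alone suffices, and the finish requires the full arithmetic of Lemma~\ref{lem:GQbasics}(ii)--(iv) applied to $(s+1)(st+1)=\binom{n}{k}$ to eliminate sporadic numerical coincidences. Once $n$ is bounded in practice to $n\leqslant 12$ or so, a direct inspection --- computer-assisted via \GAP{} if needed --- over the primitive almost simple groups with socle $A_n$ verifies that the only thick generalized quadrangle arising is the unique $\W(3,2)\cong\Q(4,2)$ of order $(2,2)$, with $G\leqslant S_6$ and $\soc(G)=A_6\cong\PSL(2,9)$, proving the lemma.
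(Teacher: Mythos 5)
First, a point of reference: the paper does not prove this lemma at all --- it is imported verbatim from \cite[Theorem 1.2]{primGQ} --- so your proposal is really a reconstruction of that reference. Your overall frame (split the maximal subgroup $G_P$ of $A_n$ or $S_n$ into intransitive, imprimitive and primitive types, use $|\calP|=|G:G_P|=(s+1)(st+1)$ together with the GQ arithmetic of Lemma \ref{lem:GQbasics}, invoke a Mar\'oti/Praeger--Saxl bound in the primitive case, and finish the surviving small cases by computer) does match the shape of the argument in that source. However, several of your concrete steps fail.

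(i) The reduction ``without loss of generality $s\le t$'' is not available: dualising exchanges point-primitivity with line-primitivity, and the hypothesis here is point-primitivity only, so Lemma \ref{lem:Pbounds}(i) cannot be invoked as stated; if $s>t$ you only get the weaker $|\calP|<(t+1)^5$ from $s\le t^2$, and the subsequent exponents change. (ii) In the subcase $k=1$, the equation $(s+1)(st+1)=n$ certainly has solutions --- take any admissible pair $(s,t)$ and define $n$ accordingly --- so your claim that it ``admits no solution'' is false. What actually kills this case is that $A_n$ is $2$-transitive in its natural action, so every $G$-invariant graph on $\calP$ is complete or empty, while the collinearity graph of a thick generalized quadrangle is neither. (iii) Most seriously, the ``largeness'' argument for the remaining intransitive and imprimitive cases points the wrong way. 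For $G_P=(S_k\times S_{n-k})\cap G$ with $k$ small, $|G_P|$ is nearly all of $|G|$ and the index $\binom{n}{k}$ is tiny compared with $|G_P|^3$; likewise for $S_a\Wr S_b$ with $b=2$ or $a=2$, Stirling's formula shows $|G:G_P|\ll|G_P|^3$. Hence $|G|<|G_P|^4$ holds trivially in these families and bounds nothing; your assertion that the imprimitive case ``already fails outside a very short list'' is incorrect. These are precisely the cases where the substance of \cite[Theorem 1.2]{primGQ} lies, and it is handled there by arithmetic specific to the Johnson and multinomial actions (prime divisors of $\binom{n}{k}$ in $(n/2,n]$, the suborbit lengths $\binom{k}{i}\binom{n-k}{k-i}$ constraining the valency $s(t+1)$ and strong regularity of the collinearity graph, and so on), none of which appears in your sketch. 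You do flag the intransitive case as ``the main obstacle,'' but you offer no mechanism that actually bounds $n$ there, so the proof as proposed does not close.
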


\begin{lem}
\label{lem:ASnotSporadic}
If $\calQ$ is a locally $(G,2)$-transitive generalized quadrangle, then $\soc(G)$ cannot be a sporadic simple group. 
\end{lem}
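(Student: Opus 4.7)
The plan is to exploit how restrictive the hypothesis ``$G$ is almost simple with sporadic socle'' is once combined with point-primitivity and local $2$-transitivity. By Theorem~\ref{thm:main_rephrased}, since $\calQ$ of order $(3,5)$ has collineation group $2^6{:}(3.A_6.2)$ (not almost simple), any almost simple $G$ with $\soc(G)=T$ sporadic is quasiprimitive on both $\calP$ and $\calL$. By Theorem~\ref{thm:prim} we may assume, up to duality, that $G$ acts primitively on $\calP$, and so $G_P$ is a maximal subgroup of $G$. Local $2$-arc-transitivity further forces $G_P$ to act faithfully and $2$-transitively on $\Gamma(P)$, a set of size $t+1\geq 3$.

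First I would compile, for each sporadic simple $T$ and each almost simple $G$ with $\soc(G)=T$, the list of maximal subgroups $M\leq G$ admitting a faithful $2$-transitive action of degree at least $3$. Such a list is short: it is obtained from the ATLAS catalogue of maximal subgroups of sporadic almost simple groups together with the CFSG classification of finite $2$-transitive groups. For each surviving pair $(G,M)$ and each possible degree $t+1$ of a $2$-transitive action of $M$, I would then test the Diophantine equation
\[
|G:M|\;=\;(s+1)(st+1)
\]
for integer solutions with $2\leq s$. Most pairs fail this at once, and those that do not can typically be ruled out by the bounds of Lemma~\ref{lem:Pbounds}, namely $(t+1)^2<|\calP|<(t+1)^3$ when $s\leq t$ (and the analogous bound for $s\geq t$).

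For any residual case that survives these numerical tests, I would bring in Lemma~\ref{lem:ratio}: the line stabilizer must satisfy $|G_\ell|/|G_P|=(s+1)/(t+1)$, with $G_\ell$ contained in some maximal subgroup of $G$ and acting $2$-transitively of degree $s+1\geq 3$ on $\Gamma(\ell)$. Since $G_P$ and the maximal overgroup of $G_\ell$ are read off the same ATLAS table, checking the existence of a compatible pair $(G_P,G_\ell)$ is mechanical; in each remaining case one of the required $2$-transitive actions simply does not exist inside $G$.

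The main obstacle is bookkeeping rather than mathematics: one must enumerate finitely many pairs $(G,M)$ for the $26$ sporadic socles and verify that the equation $(s+1)(st+1)=|G:M|$ either has no integer root or forces parameters inconsistent with a second $2$-transitive maximal (or sub-maximal) subgroup playing the role of $G_\ell$. I expect the delicate cases to be concentrated among the Mathieu groups, where several small-degree $2$-transitive actions of maximal subgroups coexist (e.g.\ $M_{11},\,M_{12}$ acting on $11$ or $12$ points, and $\PSL(2,11)<M_{11}$, $M_{11}<M_{12}$). In each such instance, however, the same style of argument already used at the end of the proof of Proposition~\ref{prop:ASst+1} -- solve $(s+1)(st+1)=|G:G_P|$ for $s$ and observe that no integer solution meets $s\geq 2$ -- disposes of the candidate, completing the reduction.
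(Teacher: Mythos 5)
Your overall strategy is the right one and is essentially the paper's: reduce to the point-primitive case via Theorem \ref{thm:prim}, so that $G_P$ is maximal in $G$, and then run a finite check over the sporadic almost simple groups using the equation $(s+1)(st+1)=|G:G_P|$ together with the required local action of degree $t+1$. The difference is that the paper discharges the finite check by citation: \cite[Table 8]{primGQ} already lists every point-primitive, flag-transitive candidate with sporadic socle together with $s$, $t$ and $G_P$, and the proof simply observes that in none of those cases does $G_P$ even act \emph{primitively} on $t+1$ elements. So the paper needs only local primitivity at that stage and does no new Diophantine work, whereas you propose to rebuild the table from scratch.

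Two problems with your version. First, a genuine error: local $2$-arc-transitivity does \emph{not} force $G_P$ to act faithfully on $\Gamma(P)$; it forces the induced group $G_P^{\Gamma(P)}\cong G_P/\stabkernel{G}{P}$ to be $2$-transitive, and the kernel $\stabkernel{G}{P}$ is typically nontrivial (see the point stabilizers $E_q^3{:}\GL(2,q)$, etc., in Table \ref{tbl:classical}). Compiling, for each sporadic $G$, the maximal subgroups ``admitting a faithful $2$-transitive action'' is therefore the wrong filter: it is strictly too restrictive and could silently discard a maximal subgroup whose only $2$-transitive actions of degree $t+1$ have a kernel, which would invalidate the classification. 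The correct filter is the existence of a subgroup of index $t+1$ on whose cosets the action is $2$-transitive (equivalently, a $2$-transitive quotient action), and the same correction applies on the line side. Second, the enumeration itself -- which is the entire content of the lemma -- is described but never performed; as written the argument is a program rather than a proof. If you do not wish to lean on \cite[Table 8]{primGQ} as the paper does, you must actually carry out the check across all $26$ sporadic socles and their automorphism extensions before the lemma can be considered established.
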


\begin{proof}
 By Theorem \ref{thm:prim}, $G$ must act primitively on points.  By \cite[Table 8]{primGQ}, the possibilities for $G$, $s$, $t$, and $G_P$ are known, and for none of these choices does $G_P$ have a primitive action on $t+1$ elements.  The result follows. 
\end{proof}

\begin{prop}
 \label{prop:Lie}
If $\calQ$ is a thick locally $(G,2)$-transitive generalized quadrangle and $\calQ$ is not the unique generalized quadrangle of order $(3,5)$ or its dual, then $G$ is an almost simple group of Lie type.  Moreover, up to duality, we may assume that $G$ is primitive on points and $s \le t$. 
\end{prop}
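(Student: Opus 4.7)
The plan is to combine Theorem~\ref{thm:main_rephrased} with the CFSG, using the two lemmas just established to eliminate the non-Lie-type possibilities. By Theorem~\ref{thm:main_rephrased}, since $\calQ$ is by hypothesis neither the generalized quadrangle of order $(3,5)$ nor its dual, the group $G$ is almost simple with socle $T$ a finite nonabelian simple group. By Theorem~\ref{thm:prim}, up to duality we may further assume that $G$ is primitive on $\calP$, and local $2$-transitivity already makes $G$ edge-transitive on the incidence graph; combined with point-transitivity this places us in the setting of a point-primitive, flag-transitive almost simple action with socle $T$.

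By the CFSG, $T$ belongs to exactly one of three families: alternating $A_n$ with $n \ge 5$, sporadic, or simple of Lie type (where we regard $A_5 \cong \PSL(2,5)$ and $A_6 \cong \PSL(2,9)$ as Lie-type groups). Lemma~\ref{lem:ASnotAn} shows the only surviving alternating possibility is $\soc(G) \cong A_6$, with $\calQ$ the classical quadrangle of order $(2,2)$; since $A_6$ is itself of Lie type, this case is absorbed into the Lie-type conclusion. Lemma~\ref{lem:ASnotSporadic} rules out the sporadic case outright. Therefore $T$ must be a finite simple group of Lie type, which is the substantive content of the proposition.

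The closing clause that $s \le t$ may be assumed is a harmless convention for the subsequent analysis: having used duality once to secure point-primitivity, we simply relabel so that the smaller of the two parameters is called $s$, applying duality one further time where necessary. There is no genuine obstacle to this step, as the main ``difficulty'' of the proposition lies entirely in invoking the prior machinery --- the CFSG-based reduction to Lie type is essentially immediate once Lemmas~\ref{lem:ASnotAn} and~\ref{lem:ASnotSporadic} are in hand. The real work in this section of the paper (as opposed to this single proposition) will be the analysis of which large maximal subgroups of $T$ can arise as $T_P$, carried out in subsequent sections via the classification of large subgroups \cite{largesubs}.
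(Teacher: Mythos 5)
Your reduction to Lie type is exactly the paper's: invoke Theorem~\ref{thm:main_rephrased} to get almost simplicity, Theorem~\ref{thm:prim} to get point-primitivity up to duality, and then Lemmas~\ref{lem:ASnotAn} and~\ref{lem:ASnotSporadic} to kill the alternating and sporadic socles (with $A_6\cong\PSL(2,9)$ absorbed into Lie type). That part is fine.

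The gap is in your final paragraph. The claim that $s\le t$ is ``a harmless convention'' obtained by ``applying duality one further time where necessary'' does not work, because duality swaps points and lines. You have already spent your one application of duality to arrange that $G$ is primitive on $\calP$. If it then happens that $s>t$ and $G$ is \emph{not} primitive on $\calL$, dualizing again produces a quadrangle on which $G$ is primitive on lines but not on points, destroying the standing hypothesis that the later sections depend on; you cannot simultaneously normalize both conditions by relabelling alone. The paper closes exactly this case with a short but genuine argument: if $G$ is primitive on points but imprimitive on lines and $s>t$, then the dual form of Lemma~\ref{lem:Brestrictions} forces every block of lines to have size $st+1$, and Proposition~\ref{prop:ASst+1} says an almost simple $G$ cannot preserve a block system with blocks of size $st+1$ on points or lines --- a contradiction, so in fact $s\le t$ in that case. (When $G$ is primitive on both $\calP$ and $\calL$, your relabelling argument is legitimate.) You should add this case distinction; without it the ``moreover'' clause of the proposition is unproved.
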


\begin{proof}
 Assume that $\calQ$ is not the unique generalized quadrangle of order $(3,5)$ or its dual.  That $G$ is an almost simple group of Lie type follows from Theorem \ref{thm:main_rephrased} and Lemmas \ref{lem:ASnotAn} and \ref{lem:ASnotSporadic}.
 
 Assume now that $G$ is an almost simple group of Lie type that is primitive on points.  If $G$ is primitive on lines as well, then certainly we may assume $s \le t$.  Otherwise, suppose $G$ is primitive on points but not lines.  If $s > t$, then by Lemma \ref{lem:Brestrictions}, $G$ preserves a system of imprimitivity on $\calL$ with blocks of size $st + 1$, a contradiction to Proposition \ref{prop:ASst+1}.  Therefore, up to duality, $G$ is primitive on points and $s \le t$, as desired.
\end{proof}

\begin{lem}
\label{lem:largeGP}
Let $\calQ$ be a locally $(G,2)$-transitive generalized quadrangle, assume $2 < s \le t$, and let $P \in \calP$.  Then $|G| < |G_P|^\frac{5}{2}$.   
\end{lem}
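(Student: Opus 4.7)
The approach is elementary and turns entirely on the lower bound on $|G_P|$ forced by local $2$-arc-transitivity. The desired inequality $|G| < |G_P|^{5/2}$ is equivalent, via $|G| = |G_P| \cdot |\calP|$, to $|\calP| < |G_P|^{3/2}$, which by Lemma \ref{lem:GQbasics}(i) may be rewritten as
\[(s+1)(st+1) < |G_P|^{3/2}.\]

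Since the incidence graph is locally $(G,2)$-arc-transitive, the induced group $G_P^{\Gamma(P)}$ is $2$-transitive on the set $\Gamma(P)$ of $t+1$ lines through $P$, so $|G_P| \ge |G_P^{\Gamma(P)}| \ge (t+1)t$. Using the hypothesis $s \le t$, we have $(s+1)(st+1) \le (t+1)(t^2+1)$, and so it will suffice to verify the purely numerical inequality
\[(t+1)(t^2+1) < \bigl[t(t+1)\bigr]^{3/2}.\]
Squaring both sides and cancelling $(t+1)^2$ reduces this to $(t^2+1)^2 < t^3(t+1)$, equivalently $t^3 > 2t^2 + 1$. This holds for every integer $t \ge 3$ (the boundary case $t=3$ giving $27 > 19$). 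Since $s > 2$ and $s$ is a positive integer we have $s \ge 3$, hence $t \ge s \ge 3$, and the proof is complete.

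I do not anticipate any genuine obstacle. The inequality is satisfied comfortably for all $t \ge 3$; the tightest case is $s = t = 3$, where $|\calP| = 40$ against $|G_P|^{3/2} \ge 12^{3/2} > 41$. The only substantive input is the lower bound $|G_P| \ge t(t+1)$, which is immediate from the $2$-transitivity of $G_P^{\Gamma(P)}$ on $\Gamma(P)$; no structural information about $G$ as an almost simple group of Lie type is required at this stage, making the lemma a clean general tool for the subsequent classification of large stabilizers.
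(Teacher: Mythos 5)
Your proof is correct and follows essentially the same route as the paper's: both arguments rest solely on the bound $|G_P|\ge t(t+1)$ coming from the $2$-transitivity of $G_P^{\Gamma(P)}$ on $\Gamma(P)$, followed by an elementary numerical inequality. The only difference is cosmetic — the paper splits into the cases $s<t$ and $s=t$, whereas you bound $(s+1)(st+1)\le(t+1)(t^2+1)$ uniformly, which is slightly cleaner.
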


\begin{proof}
First, since $\calQ$ is locally $(G,2)$-transitive, $G_P$ has a $2$-transitive action on $t+1$ elements, and hence $|G_P| \ge (t + 1)t$.  If $s < t$, then $s +1 \le t$ and so 
\[|G_P|^{1.5} \ge \left( t(t+1)\right)^{1.5} > t \cdot t(t+1) \ge (s+1) \cdot s(t+1) > (s+1)(st+1).\]
If $s = t$, then $s^3 - 2s^2 - 1 > 0$ when $s \ge 3$.  This implies $s^4 + s^3 > s^4 + 2s^2 + 1$, and so \[(s(s+1))^3 > ((s+1)(s^2 + 1))^2,\]
i.e., \[ |G_P|^{1.5} \ge \left(s(s+1)\right)^{1.5} > (s+1)(s^2 + 1).\]
In either case, we have 
\[|G| = |G_P|\cdot |G:G_P| = |G_P|\cdot |\calP| = |G_P|\cdot (s+1)(st+1) < |G_P|^{2.5},\]
as desired.
\end{proof}

Throughout the remainder of this section, $\mathcal{Q}$ is a thick locally $(G,2)$-transitive generalized quadrangle of order $(s,t)$. We also suppose
$G$ is almost simple of Lie type, and $\soc(G)=T$.

\begin{lem}\label{notlargebound}
Let $A$ be the outer automorphisms corresponding to the quotient of $G_PT$ by $T$,
and suppose $2<s\le t$. If $P$ is a point and $T_P$ is not a large subgroup of $T$, then
\[
|T_P|^3 \le |T| \le |T_P|^{5/2} |A|^{3/2}.
\]
and hence $|A|^{3}\ge|T_P|$. In particular, $|\Out(T)|^3\ge|T_P|$.
\end{lem}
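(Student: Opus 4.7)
The first inequality, $|T_P|^3 \le |T|$, is simply the contrapositive of $T_P$ being a large subgroup of $T$ as defined in Section~\ref{sect:large}; this is the hypothesis. So the content of the lemma is to prove the upper bound $|T| \le |T_P|^{5/2}|A|^{3/2}$ and then extract the corollary about $|A|$.

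The plan for the upper bound is to combine Lemma~\ref{lem:largeGP} (which gives $|G| < |G_P|^{5/2}$, using the assumption $2 < s \le t$) with two elementary index computations relating $G_P, T_P, T$ and $A$. First, by the Second Isomorphism Theorem, $G_PT/T \cong G_P/(G_P \cap T) = G_P/T_P$, so the definition of $A$ yields $|G_P| = |A|\cdot|T_P|$. Second, since $A = G_PT/T \le G/T$, we have $|G|/|T| \ge |A|$, i.e.\ $|G| \ge |T|\cdot|A|$. Substituting both into Lemma~\ref{lem:largeGP} gives
\[
|T|\cdot|A| \;\le\; |G| \;<\; |G_P|^{5/2} \;=\; |A|^{5/2}\,|T_P|^{5/2},
\]
so dividing by $|A|$ yields the desired $|T| \le |A|^{3/2}|T_P|^{5/2}$.

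For the consequence, I would combine the two inequalities $|T_P|^3 \le |T| \le |A|^{3/2}|T_P|^{5/2}$ and cancel $|T_P|^{5/2}$ from both sides to obtain $|T_P|^{1/2} \le |A|^{3/2}$, that is $|T_P| \le |A|^3$. Since $A$ is a quotient of a subgroup of $G$ by $T$, it embeds into $G/T \le \Out(T)$, so $|A| \le |\Out(T)|$ and therefore $|\Out(T)|^3 \ge |T_P|$. There is no significant obstacle here: the proof is essentially bookkeeping with indices together with the already-established numeric inequality from Lemma~\ref{lem:largeGP}.
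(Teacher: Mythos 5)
Your proposal is correct and follows essentially the same route as the paper's proof: the identity $|G_P| = |A|\,|T_P|$ (which the paper derives via $|G_P\cap T| = |G_P||T|/|G_PT|$ and you via the second isomorphism theorem — the same computation), the bound $|T||A| \le |G| < |G_P|^{5/2}$ from Lemma~\ref{lem:largeGP}, and then cancellation against the non-largeness hypothesis $|T_P|^3 \le |T|$. No gaps.
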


\begin{proof}
First observe that
\begin{align*}
|T_P|&=|G_P\cap T|= \frac{ |G_P| |T| }{ | G_P T |}= \frac{ |G_P| }{ | G_P T \colon T|}= \frac{ |G_P| }{ | A |}.
\end{align*}
By Lemma \ref{lem:largeGP}, $|G|\le |G_P|^{5/2}$ and so $|T||A| \le (|T_P||A|)^{5/2}$. Therefore,
$|T|\le |T_P|^{5/2} |A|^{3/2}$. Finally, since $|T_P|^3\le |T|$, we have $|T_P|^{1/2}\le |A|^{3/2}$.
\end{proof}

\begin{lem}\label{lognbound}
Let $n=(s+1)(st+1)$ and suppose $2<s\le t$. Then, $|\Out(T)| \le 3\log n$.  Furthermore, let $P$ be a point, and suppose $T_P$ is not a large subgroup of $T$.  
\begin{enumerate}[(i)]
\item If $|A|\le \log n$, then $|T_P|\le 29409$ and $|T| \le 1.185181\times 10^{11}$.
\item If $|A|\le 2\log n$, then $|T_P|\le 484596$ and $|T|\le 9.08533\times 10^{14}$. Moreover, if $\log n < |A| \le 2\log n$, then $T$ is isomorphic to one of the following: $\PSU(d, 2^f)$, $\PSU(d, 3^f)$, or $\POmega^+(8,q)$, where $q \not\equiv 0 \pmod 3$.	
\item If $|A|\le 3\log n$, then $|T_P|\le 2289183$ and $|T|\le 1.13799\times10^{17}$.  Moreover, if $2\log n < |A| \le 3\log n$, then $T$ is isomorphic to either $\PSL(d,q)$ with $d>2$ or $\POmega^+(8,3^f)$.
\end{enumerate}
\end{lem}

\begin{proof}
Since $n=(s+1)(st+1) < (t+1)^3$ and $|T_P| \ge t+1$, we have $|T_P|>n^{1/3}$.
Therefore,
\[
|T| = n|T_P| > n^{4/3}
\]
and hence $|T_P|^3>n$. Suppose $|A|\le \alpha\log n$ where $\alpha\ge 1$.
By Lemma \ref{notlargebound}, we have
\begin{align*}
|T_P|^3&\le |T|\le  |T_P|^{5/2} |A|^{3/2} \\
&\le  |T_P|^{5/2} (\alpha \log n)^{3/2}\\
&< |T_P|^{5/2} (3\alpha \log |T_P|)^{3/2}
\end{align*}
and hence 
\begin{equation}
|T_P| <  27\alpha^3 (\log|T_P|)^3.
\end{equation}

For a given value of $\alpha$, this allows us to bound both $|T_P|$ and hence also $|T|$, and we have:

\begin{center}\footnotesize
\begin{tabular}{lll}
\toprule
$\alpha$ & $|T_P|$ & $|T|$\\ 
\midrule
1&29410&118518040738\\
2&484596&908532744261494\\
3&2289183&113798703080610442\\
\bottomrule
\end{tabular}
\end{center}


Finally, the assertions about the structure of $T$ in (ii) and (iii) follow directly from \cite[Lemma 7.7]{Guralnick:2017aa} and its proof.
\end{proof}

\begin{thm}\label{largeortable}
Let $\mathcal{Q}$ be a locally $(G,2)$-transitive generalized quadrangle of order $(s,t)$, where $2< s\le t$. Suppose
$G$ is almost simple of Lie type, and $\soc(G)=T$.
Then one of the following occurs:
\begin{enumerate}[(i)]
\item $T_P$ is large;
\item $T=\PSL(4,9)$ or $T=\PSU(5,4)$;
\item $T$ appears in the table below:
\end{enumerate}
\begin{center}\footnotesize
\begin{tabular}{ll}
\toprule
$\PSL(2,q)$ & $q \in\{ 27,64,125,169,243,289,343,512,529,729,1024,1369,1849,2187,2209,2809,3125,4489,5329\}$\\ 
$\,^2B_2(q)$ & $q\in\{8,32,128\}$ \\ 
$\PSL(3,q)$ & $q\in\{7,8,13,16,25,31,32,37,43,49,61,64,67,79,97,103,109,127,128\}$\\ 
$\PSU(3,q)$ & $q\in\{4,8,11,17,23,32\}$\\%
\bottomrule
\end{tabular}
\end{center}
\end{thm}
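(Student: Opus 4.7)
The plan is to assume $T_P$ is not a large subgroup of $T$ and combine the two preceding lemmas. By Lemma \ref{notlargebound}, this forces $|T_P|\le |A|^3\le |\Out(T)|^3$ and simultaneously $|T|\le |T_P|^{5/2}|A|^{3/2}$. The engine of the proof is that for every simple group of Lie type, $|\Out(T)|$ grows much more slowly than $|T|$, so these two inequalities together with Lemma \ref{lognbound} squeeze $|T|$ into a finite explicitly computable list.

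Concretely, I would first record, family by family through the Lie type classification, the standard formulas for $|T|$ and $|\Out(T)|=d f g$, where $d$ is the diagonal contribution, $f$ the field automorphism contribution, and $g$ the graph automorphism contribution. For each family, $n=(s+1)(st+1)$ satisfies $n^{4/3}<|T|$, so $\log n$ is of the same order as $\log|T|$, and one can verify an inequality of the form $|\Out(T)|\le \alpha\log n$ for a small constant $\alpha\in\{1,2,3\}$ outside a short list of ``exceptional small'' $q$'s where the $\gcd$ factors $\gcd(n,q\pm1)$ are abnormally large. Feeding the chosen $\alpha$ into Lemma \ref{lognbound} gives $|T|\le 1.19\times10^{11}$, $9.09\times10^{14}$, or $1.14\times10^{17}$ respectively.

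With $|T|$ uniformly bounded, I would then enumerate. High-rank families (e.g.\ $E_8$, $E_7$, $E_6$, $F_4$, $\POmega^\pm_{2n}$, $\PSp_{2n}$ with $n$ not tiny) are eliminated at once because $|T|$ exceeds the bound for any admissible $q$. Middle-rank classical families such as $\PSL(n,q)$ and $\PSU(n,q)$ with $n\ge 4$ survive only for tiny $(n,q)$; a direct check of the few remaining orders, using the additional constraints $t+1\le|T_P|\le|\Out(T)|^3$, the divisibility $|T_P|\cdot(s+1)(st+1)=|T|$, and the existence of a $2$-transitive action of $G_P$ on $t+1$ points, reduces the survivors to the two sporadic exceptions $\PSL(4,9)$ and $\PSU(5,4)$. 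The remaining tame families---$\PSL(2,q)$, $\PSL(3,q)$, $\PSU(3,q)$, and the Suzuki groups ${}^2B_2(q)$---require the case $\alpha$ cannot be taken as small as desired (their $|\Out(T)|$ behaves irregularly when $q\pm1$ has many divisors in common with the rank), and here one produces the explicit lists in the table by direct enumeration of $q$ up to the cutoff from Lemma \ref{lognbound}.

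The main obstacle will be the bookkeeping for these low-rank classical families, where one cannot avoid a finite but nontrivial computation: one must decide, for each $q$ with $|\Out(T)|^3$ comparable to $|T_P|$, whether the orbit-length equation $|T|=|T_P|\cdot(s+1)(st+1)$ admits admissible $(s,t)$ at all. The exceptional pair $\PSL(4,9)$ and $\PSU(5,4)$ emerges precisely because, in those cases, $\gcd(n,q\pm1)$ together with the field automorphisms of order $2$ pushes $|\Out(T)|$ to the borderline where the bounds from Lemmas \ref{notlargebound} and \ref{lognbound} just fail to rule the group out, but where the finer structural arguments of later sections are needed to resolve them; they are therefore set aside as named exceptions in the statement rather than incorporated into the table.
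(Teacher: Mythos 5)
Your proposal is correct and follows essentially the same route as the paper: assume $T_P$ is not large, use Lemma \ref{notlargebound} and Lemma \ref{lognbound} to bound $|T|$ according to whether $|\Out(T)|\le \alpha\log n$ for $\alpha\in\{1,2,3\}$, and then enumerate, pruning candidates with the test $|\Out(T)|^3>\mu$ where $\mu$ is the size of a smallest (non-large) maximal subgroup. The one step you treat as routine---determining which socles can have $|\Out(T)|>\log n$ or $>2\log n$ in a primitive action of degree $n$---is not just a matter of ``exceptional small $q$'': the exceptions are whole families ($\PSU(d,2^f)$, $\PSU(d,3^f)$, $\POmega^+(8,q)$, and $\PSL(d,q)$ for $d>2$), and the paper imports this classification from \cite[Lemma 7.7]{Guralnick:2017aa} rather than re-deriving it family by family.
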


\begin{proof}
Let $n=(s+1)(st+1)$ and assume that $T_P$ is not large in $T$.
\begin{description}
\item[Case 1] Assume that $|\Out(T)|\le \log n$]. 
By Lemma \ref{lognbound}, $|T|\le 1.19\times 10^{11}$, and
by aid of computer, we can work out easily the possibilities for $T$:
\begin{center}
\begin{tabular}{ll|ll}
\toprule
$\PSL(6,2)$ & -- & $\PSU(6,2)$ &--\\ 
$\PSL(5,2)$ & -- & $\PSU(5,2)$ &--\\ 
$\PSL(4,q)$ & $q\le 4$ & $\PSU(4,q)$ & $q\le 4$\\ 
$\PSL(3,q)$ & $q\le 25$ & $\PSU(3,q)$ & $q\le 23$\\
$\PSL(2,q)$ & $5< q\le 6211$ &$G_2(q)$ & $q\le 5$ \\ 
$\PSp(8,2)$ & -- &$\,^2B_2(q)$ & $q\le 2^7$ \\
$\PSp(6,q)$ & $q\le 3$ & $\,^2G_2(q)$ & $q\le 3^3$ \\ 
$\PSp(4,q)$ & $2<q\le 13$ &$\,^2F_4(2)'$&--\\ 
$\POmega(7,3)$ & -- &$\,^3D_4(2)$&--\\ 
$\POmega^+(8,2)$ & -- &$\POmega^-(8,2)$ & -- \\
\bottomrule
\end{tabular}
\end{center}
By Lemma \ref{notlargebound}, the outer automorphism groups of the above groups would need to have
size at least $\mu^{1/3}$, where $\mu$ is the size of a smallest maximal subgroup of $T$. Thus the list reduces to
(at most) the following:

\begin{center}
\begin{tabular}{ll}
\toprule
$\PSL(2,q)$ & $5<q\le 6211$, $q$ not prime\\
$\,^2B_2(q)$ & $q\le 2^7$ \\
$\PSL(3,q)$ & $2<q\le 25$\\ 
$\PSU(3,q)$ & $2<q\le 23$\\
\bottomrule
\end{tabular}
\end{center}
Via examples, we elaborate on how we deduced this smaller table of simple groups.
For $\,^2G_2(q)$, the outer automorphism group has size $f$ where $q=3^f$. The smallest maximal subgroup is
$C_{q-\sqrt{3q}+1}:C_6$ (see \cite[p. 398]{BrayHoltRoney-Dougal}). So we require that $\log_3(q)^3 > 6(q-\sqrt{3q}+1)$, which is never true.
For $\POmega^+(8,2)$, we use GAP \cite{GAP4} to compute the maximal subgroups. The smallest one has size 14400; the outer automorphism
group has size 6. Likewise for $\POmega^-(8,2)$, the smallest maximal subgroup has size 168; the outer automorphism
group has size 2. The group $\POmega(7,3)$, has smallest maximal subgroup of size 13824; the outer automorphism
group has size 2.

In the case that $T=\PSL(2,q)$, $5<q\le 6211$, $q$ not prime, we can
refine the list of examples by considering when $|\Out(\PSL(2,q))|^3$ is larger than the size $\mu$ of the smallest non-large maximal subgroup.
For many values of $q$, we do not have non-large maximal subgroups. This refinement appears in Table \ref{tbl:logn_PSL2}.
\begin{table}[H]
\caption{Results for $\PSL(2,q)$.}\label{tbl:logn_PSL2}
\begin{center}\footnotesize
\begin{tabular}{p{3.5cm}ll}
\toprule
Values $q$ for which $|\Out(\PSL(2,q))|^3>\mu$& $|\Out(\PSL(2,q))| $ &$\mu$\\
27&6&12\\
64&6&60\\
125&6&60\\
169&4&60\\
243&10&12\\
289&4&60\\
343&6&168\\
512&9&504\\
529&4&60\\
729&12&360\\
1024&10&60\\
1369&4&60\\
1849&4&60\\
2187&14&12\\
2209&4&60\\
2809&4&60\\
3125&10&60\\
4489&4&60\\
5329&4&60\\
\bottomrule
\end{tabular}
\end{center}
\end{table}

Similarly, in the case that $T=\PSU(3,q)$, $2<q\le 25$, we can
refine the list of examples by considering when $|\Out(\PSL(3,q))|^3$ is larger than the size $\mu$ of the smallest non-large maximal subgroup.
See Table \ref{tbl:logn_PSU3}.

\begin{table}[H]
\caption{Results for $\PSU(3,q)$.}\label{tbl:logn_PSU3}
\begin{center}\footnotesize
\begin{tabular}{p{5cm}ll}
\toprule
Values $1< q\le 25$ for which $|\Out(\PSU(3,q))|^3>\mu$& $|\Out(\PSU(3,q))| $ &$\mu$\\
4&4&39\\
8&18&57\\
11&6&72\\
17&6&168\\
23&6&72\\
\bottomrule
\end{tabular}
\end{center}
\end{table}

For the remainder of the proof, suppose $|\Out(T)|> \log n$.

\item[Case 2]  Assume that $\log n < |\Out(T)| \le 2\log n$.
By Lemma \ref{lognbound}(ii), $|T|\le  9.08533\times 10^{14}$ and $T\cong \PSU(d,2^f)$, $\PSU(d,3^f), \POmega^+(8,q)$; the latter examples having $q\not\equiv 0\pmod{3}$.
This gives us the following simple groups:
\begin{table}[H]
\caption{Candidates for the case $\log n < |\Out(T)| \le 2\log n$.}\label{tbl:2logncase}
\begin{center}\footnotesize
\begin{tabular}{ll}
\toprule
$\PSU(3,q)$ & $q\in\{3,4,8,9,16,27,32,64\}$\\
$\PSU(4,q)$ & $q\in\{2,3,4,8,9\}$\\
$\PSU(5,q)$ & $q\in\{2,3,4\}$\\ 
$\PSU(6,2)$ & --\\  
$\PSU(7,2)$ & --\\  
$\POmega^+(8,2)$ & --\\
\bottomrule
\end{tabular}
\end{center}
\end{table}

We can further remove examples from the table by (i) removing those for which all of their
maximal subgroups are large, and (ii) noting that $|\Out(T)|^3>|T_P|$ (by Lemma \ref{notlargebound}).
For $\POmega^+(8,2)$, the smallest maximal subgroup has size 14400; the outer automorphism
group has size 6 (i.e., $S_3$), which is much less than the cube-root of 14400. 
For the unitary groups, we use \textsc{Magma} \cite{Magma} to work out which of those examples in Table \ref{tbl:2logncase} have $|\Out(T)|^3$ larger than
the size $\mu$ of the smallest maximal subgroup. In particular, $\PSU(d,q)$ does not arise as a candidate for $T$ if $d\ge 4$,
except possibly $\PSU(5,4)$, which we cannot handle directly by computer. In this instance, we resort
to \cite[Table 8.20]{BrayHoltRoney-Dougal}, which shows that the smallest maximal subgroup of $\PSU(5,4)$ has size $1025/5=205$.
The outer automorphism group of $\PSU(5,4)$ has size $20$, and so this case remains.
(Moveover, for $(d,q)\in\{(4,2),(4,3),(4,4),(5,2),(6,2)\}$, every maximal subgroup of $\PSU(d,q)$ is large.)
Table \ref{tbl:PSU3} gives a summary of what is left over in the Lie rank $1$ case.

\begin{table}[H]
\caption{Candidates $T=\PSU(3,q)$ satisfying $\log n < |\Out(T)| \le 2\log n$ and $|\Out(T)|^3>\mu$.}\label{tbl:PSU3}
\begin{center}\footnotesize
\begin{tabular}{p{3.5cm}ll}
\toprule
Values $q$ for which $|\Out(\PSU(3,q))|^3>\mu$& $|\Out(\PSU(3,q))| $ &$\mu$\\
4& 4&39\\
8& 18&57\\
32& 30&72\\
\bottomrule
\end{tabular}
\end{center}
\end{table}

\item[Case 3]  Assume that $2\log n < |\Out(T)| \le 3\log n$.
By Lemma \ref{lognbound}(iii), we have the following candidates for $T$: 
$\PSL(d,q)$ ($d>2$), $\POmega^+(8,3^f)$.
By Lemma \ref{lognbound}, $|T|\le 1.138\times10^{17}$, which by aid of computer, produces the following list of
remaining cases:

\begin{center}\footnotesize
\begin{tabular}{ll}
\toprule
$\POmega^+(8,3)$ & --\\
$\PSL(3,q)$ & $2\le q\le 121$\\
$\PSL(4,q)$ & $2\le q\le 13$\\
$\PSL(5,q)$ & $2\le q\le 5$\\
$\PSL(6,2)$ & $2\le q\le 3$\\
$\PSL(7,2)$ & --\\
\bottomrule
\end{tabular}
\end{center}

We can further remove examples from the table by recalling that $|\Out(T)|^3>|T_P|$. For instance $\PSL(5,2)$ has an outer automorphism
group of order $2$, yet the smallest maximal subgroup of $\PSL(5,2)$ has size $155$. 
For $\PSL(3,q)$, the only examples we have for which $|\Out(\PSL(3,q))|^3$ is larger than the size $\mu$ of the smallest maximal subgroup
are in Table \ref{tbl:3logn_PSL3}. Note that if $q$ is odd, then we must have $q\equiv 1\pmod{6}$, since then $|\Out(\PSL(3,q))|=6f$
(where $q=p^f$ for some prime $p$). Otherwise, $|\Out(\PSL(3,q))|=2f$, which is, for the values of $q$
we are considering, always smaller than $\mu^{1/3}$ (see \cite[Table 8.3]{BrayHoltRoney-Dougal}). 
For $q$ even, $q\ge 8$, we have $|\Out(\PSL(3,2^f)| \ge 2f\ge 6$ and 
$\mu =168$ (because $\PSL(3,2)$ is the smallest maximal subgroup in this case). The only candidate we have left for $\PSL(d,q)$, where $d>3$, is $\PSL(4,9)$ (n.b., $|\Out(\PSL(4,9))|=16$ and $\mu=3072$).
\begin{table}[H]
\caption{Candidates $T=\PSL(3,q)$ satisfying $2\log n < |\Out(T)| \le 3\log n$ and $|\Out(T)|^3>\mu$.}\label{tbl:3logn_PSL3}
\begin{center}\footnotesize
\begin{tabular}{p{3.5cm}ll}
\toprule
Values $q$ for which $|\Out(\PSL(3,q))|^3>\mu$& $|\Out(\PSL(3,q))| $ &$\mu$\\
\bottomrule
7&6&57\\
8&6&168\\
13&6&72\\
16&24&273\\
25&12&651\\
31&6&72\\
32&10&168\\
37&6&168\\
43&6&72\\
49&12&360\\
61&6&72\\
64&36&4161\\
67&6&72\\
79&6&72\\
97&6&72\\
103&6&72\\
109&6&168\\
127&6&168\\
128&14&168\\
\bottomrule
\end{tabular}
\end{center}
\end{table}
\end{description}
\end{proof}

\begin{thm}\label{thm:whenlargeTP}
 Let $\calQ$ be a locally $(G,2)$-transitive generalized quadrangle, where $2 < s \le t$ and $G$ is an almost simple group of Lie type with $\soc(G) = T$.  
 If $P$ is a point of $\calQ$, then $T_P$ is large, that is, $|T| < |T_P|^3$. 
\end{thm}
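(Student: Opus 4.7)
The plan is to proceed by contradiction: assume $T_P$ is not large, so $|T| \geq |T_P|^3$. By the previous result, Theorem \ref{largeortable}, this forces $T$ into one of the explicitly enumerated exceptional families: either $T = \PSL(4,9)$ or $T = \PSU(5,4)$, or $T$ is one of the low-rank Lie-type candidates $\PSL(2,q)$, ${}^2B_2(q)$, $\PSL(3,q)$, $\PSU(3,q)$ for one of the finitely many listed values of $q$. I would then eliminate each of these candidates individually, showing no locally $(G,2)$-transitive GQ can arise.

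For each candidate $T$, the argument would rely on the following data. First, by Lemma \ref{notlargebound}, one has $|T_P|^{1/2} \le |A|^{3/2}$ where $A = G/T$, together with $|\Out(T)|^3 \ge |T_P|$; in the exceptional list these force $T_P$ to lie in a very short list of maximal overgroups, which can be read off from \cite{BrayHoltRoney-Dougal}. Second, we must have the equation
\[
(s+1)(st+1) \;=\; |\calP| \;=\; |G:G_P|,
\]
together with the constraint that $G_P$ admits a $2$-transitive action on $t+1$ points (so in particular $(t+1)t$ divides $|G_P|$ and the restrictions of the classification of $2$-transitive groups \cite[Tables 7.3, 7.4]{CameronPerm} apply), and the GQ parameter bounds of Lemma \ref{lem:GQbasics}, especially $t \le s^2$ and $s+t \mid st(s+1)(t+1)$.

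For each exceptional $T$, the elimination splits into three routine checks: (a) most non-large maximal subgroups $M \geq T_P$ are too small, or have too few composition factors, to supply a $2$-transitive action of the required degree $t+1$; (b) for the remaining $M$, the diophantine equation $(s+1)(st+1) = |G:G_P|$ admits at most a handful of integer pairs $(s,t)$ with $2 < s \le t$, and those can be enumerated directly (as was done, e.g., in the proof of Proposition \ref{prop:ASst+1} and in Table \ref{st+1Alt}); (c) those surviving pairs are then tested against $t \le s^2$ and the divisibility condition of Lemma \ref{lem:GQbasics}(ii). In essentially every case one of these three tests fails. A few of the small cases, e.g.\ $\PSL(4,9)$ and $\PSU(5,4)$, are most easily handled by direct \textsc{Magma} or \textsc{GAP} computation of maximal subgroups and their indices.

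The main obstacle is simply the bookkeeping: nineteen values of $q$ for $\PSL(2,q)$, eighteen for $\PSL(3,q)$, six for $\PSU(3,q)$, three for ${}^2B_2(q)$, and the two isolated cases. No single case is conceptually difficult, since the subgroup structure of these groups is completely known and the equation $(s+1)(st+1) = n$ has few integer solutions for any given $n$; the work is in verifying systematically that each candidate fails. This is analogous to, but considerably shorter than, the case analysis that will appear in the subsequent sections dealing with the large-subgroup case itself (Theorems \ref{thm:PrimImprim35} and \ref{thm:PrimPrimClassical}), where the full classification of large maximal subgroups from \cite{largesubs} is brought to bear.
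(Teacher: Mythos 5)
Your proposal matches the paper's proof in both structure and substance: the paper likewise starts from the candidate list of Theorem \ref{largeortable}, uses computer enumeration of maximal subgroups to match $|T:H|$ against $(s+1)(st+1)$ subject to the Higman inequality and divisibility condition of Lemma \ref{lem:GQbasics}, and then eliminates the three surviving triples ($\PSL(2,64)$, $\PSU(3,8)$, $\PSU(3,17)$) by observing that the point stabilizer has no transitive (a fortiori no $2$-transitive) action of degree $t+1$. This is essentially the same argument, differing only in minor emphasis.
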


\begin{proof}
For each example where $T_P$ is not large in Theorem \ref{largeortable}, we work out when there exists a maximal subgroup $H$ of $T$
such that $|T:H|$ is the number of points of a $GQ(s,t)$ with $2<s\le t$, satisfying the divisibility condition and Higman inequality (Lemma \ref{lem:GQbasics}(ii),(iii)).
We are left with only three possibilities:

\begin{center}
\begin{tabular}{lll}
\toprule
$T$ & $s$ & $t$ \\
\midrule
$\PSL(2,64)$ & 11 & 33\\ 
$\PSU(3,8)$ & 27 & 45\\
$\PSU(3,17)$ & 203 & 205\\
\bottomrule
\end{tabular}
\end{center}

(Note: In order to find the possible degrees of primitive permutation representations for $Sz(128)$, we used 
\cite[Table 8.16]{BrayHoltRoney-Dougal}. Otherwise, Magma/GAP was used).
Now we simply have three permutation groups to check. First, $\PSL(2,64)$ in its action on 4368 elements has $A_5$ as its
point stabilizer. However, $A_5$ does not have a transitive action of degree $t+1=34$; so this case does not arise.
Secondly, $\PSU(3,8)$ has the point stabilizer $H$ of order $162$. In particular, $H$ 
does not have a transitive action on $t+1=46$ elements. So this case
does not arise. Finally, $\PSU(3,17)$ acting on $204^3$ elements has a point stabilizer $H$ of order 8489664. In particular, $H$
does not have a transitive action on $t+1=206$ elements, since $8489664 \pmod{206} = 198$.
\end{proof}

\section{Primitive on points only}
\label{sect:prim1}

The purpose of this section is to prove that the only locally $(G,2)$-transitive generalized quadrangle such that $G$ is primitive on points but not lines is the unique generalized quadrangle of order $(3,5)$.

\begin{prop}
\label{prop:PrimImprimAS}
 Suppose that $\calQ$ is a thick locally $(G,2)$-transitive generalized quadrangle, and, up to duality, suppose that $G$ is primitive on points but not on lines.  If $\calQ$ is not the unique generalized quadrangle of order $(3,5)$, then $s \le t$, $s \mid t$, and each block of lines has size $t + 1$.  Moreover, $G$ is an almost simple group of Lie type, and, if $T: = \soc(G)$ and $P$ is a point, then $|T| < |T_P|^3$. 
\end{prop}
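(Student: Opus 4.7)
The plan is to assemble the four conclusions from results already proved in the paper. By Theorem \ref{thm:main_rephrased}, since $\calQ$ is thick and is not the unique generalized quadrangle of order $(3,5)$, the group $G$ must be almost simple and quasiprimitive on both points and lines. Combining point-primitivity with Lemmas \ref{lem:ASnotAn} and \ref{lem:ASnotSporadic} rules out $\soc(G)$ being alternating or sporadic, so $T := \soc(G)$ is a simple group of Lie type.

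To establish $s \le t$, I would run the same argument that appears in the proof of Proposition \ref{prop:Lie}. The hypothesis provides a nontrivial $G$-invariant partition of $\calL$, and the dual of Lemma \ref{lem:Brestrictions} (applied to lines with the roles of $s$ and $t$ interchanged) shows that if $s > t$ then every block of lines must have size $st+1$, contradicting Proposition \ref{prop:ASst+1} since $G$ is almost simple. Hence $s \le t$.

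With $s \le t$ established, the dual of Lemma \ref{lem:Brestrictions} now forces each block of lines to have size either $st+1$ or, when $s \mid t$, size $t+1$. Proposition \ref{prop:ASst+1} again eliminates the $st+1$ possibility, so $s \mid t$ and every block of lines has size exactly $t+1$.

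It remains to show $|T| < |T_P|^3$. For $s \ge 3$ this is exactly the content of Theorem \ref{thm:whenlargeTP}, whose hypothesis $2 < s \le t$ is now verified. For the residual case $s = 2$, the only thick GQs are $W(2)$ and $Q^-(5,2)$; any subgroup $G$ large enough to be locally $(G,2)$-transitive must contain the full almost simple socle ($A_6 \cong \PSL(2,9)$ and $\PSU(4,2)$ respectively), and by inspection these socles act primitively on both points and lines, so the proposition's hypothesis cannot hold when $s = 2$ and the conclusion is vacuous. The main technical weight of the proof is borne by Proposition \ref{prop:ASst+1}, which does the work of eliminating the $st+1$ block-size possibilities; the rest is careful assembly of the prior structural results.
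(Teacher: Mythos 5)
Your proposal is correct and follows essentially the same route as the paper: the paper's proof simply cites Theorem \ref{thm:main_rephrased}, Lemma \ref{lem:Brestrictions}, Proposition \ref{prop:Lie}, and Theorem \ref{thm:whenlargeTP}, which is exactly the assembly you carry out (your re-derivation of $s\le t$ via the dual of Lemma \ref{lem:Brestrictions} and Proposition \ref{prop:ASst+1} is just the content of Proposition \ref{prop:Lie}). Your explicit treatment of the $s=2$ edge case needed to invoke Theorem \ref{thm:whenlargeTP} is a small point the paper leaves implicit, and it is handled correctly.
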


\begin{proof}
 Since we are assuming $\calQ$ is not the unique generalized quadrangle of order $(3,5)$, we know that $G$ is an almost simple group.  This now follows immediately from Lemma \ref{lem:Brestrictions}, Proposition \ref{prop:Lie}, and Theorem \ref{thm:whenlargeTP}. 
\end{proof}

\begin{thm}
\label{thm:PrimImprim35}
 Let $\calQ$ be a thick locally $(G,2)$-transitive generalized quadrangle, and, up to duality, suppose that $G$ is primitive on points but not on lines.  Then $\calQ$ is the unique generalized quadrangle of order $(3,5)$.
\end{thm}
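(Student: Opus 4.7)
Apart from the generalized quadrangle of order $(3,5)$ itself --- whose full collineation group $2^{6}{:}(3.A_6.2)$ is of type $\HA$ on points (hence primitive) while being imprimitive on lines, and which is readily checked to satisfy the hypotheses of the theorem --- Proposition~\ref{prop:PrimImprimAS} reduces the proof to showing that no counterexample exists when $G$ is almost simple of Lie type with socle $T$, the point stabilizer $T_P$ is a \emph{large} subgroup of $T$ (so in particular $|T|<|T_P|^3$), and the numerical data satisfy $s\le t$, $s\mid t$, and $t\le s^2$. My plan is to feed this data into the Alavi--Burness classification of large maximal subgroups of finite simple groups \cite{largesubs}, applied to any maximal subgroup of $T$ containing $T_P$, and to rule out every candidate, essentially by rerunning the case-by-case analysis of \cite[Sections~5--8]{BLS} with the weaker local $2$ hypothesis substituted for antiflag-transitivity.

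First I would list, Lie type by Lie type, the finitely many families $(T,M)$ with $M$ a large maximal subgroup of $T$, using the explicit tables in \cite{largesubs}. For each pair I would then enumerate subgroups $T_P\le M$ and impose the conditions that $G_P$ is maximal in the almost simple overgroup $G$, that $|T:T_P|$ divides $(s+1)(st+1)$ with quotient dividing $|\Out(T)|$, that $s\mid t$ with $s\le t\le s^2$, that $G_P$ admits a $2$-transitive permutation action of degree $t+1$ (on the lines through $P$), and that $G$ admits a transitive action of degree $st+1$ on the blocks of lines. Most infinite families collapse via a primitive prime divisor argument applied to $|T:T_P|$, exactly as in \cite{BLS}, forcing certain prime powers to divide $s+1$ or $st+1$ in an incompatible way; combined with $t\le s^2$, this bounds $q$, and within the bounded range a finite search finishes the case. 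The arguments of \cite[Sections~5--8]{BLS} transfer almost unchanged because they rely only on these combinatorial constraints and not on the full strength of antiflag-transitivity; where antiflag-transitivity \emph{was} genuinely used --- typically to produce a second $2$-transitive action of $G_P$ of larger degree $s(t+1)$, or to invoke the prior classification from \cite{BLS} for small cases --- I would replace it by the weaker pair consisting of $2$-transitivity of $G_P$ on $t+1$ letters together with the new divisibility $s\mid t$, which restricts $t$ to $\{s,2s,\ldots,s^2\}$ and, in practice, suffices to close each case, supplemented by a direct \textsc{GAP} or \textsc{Magma} inspection of the residual low-rank groups listed in Theorems~\ref{largeortable} and~\ref{thm:whenlargeTP}.

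The main obstacle is the sheer bookkeeping in the higher-rank classical cases, where the Alavi--Burness list contains parametrized families of maximal parabolics, geometric subgroups in the Aschbacher classes $\mathcal{C}_1,\ldots,\mathcal{C}_8$, and a handful of $\mathcal{S}$-family and novelty subgroups. Each must be intersected with $s\mid t$ and with the $2$-transitivity constraint on $t+1$ letters, and the surviving numerics then solved or refuted. I expect the calculations to parallel those of \cite[Sections~5--8]{BLS}, with the principal new ingredient being the divisibility $s\mid t$, so that the real difficulty lies not in any single deep estimate but in a systematic and error-free traversal of the Alavi--Burness tables.
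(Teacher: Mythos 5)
Your plan coincides with the paper's proof: Proposition~\ref{prop:PrimImprimAS} supplies $s\mid t$, line-blocks of size $t+1$, and a large point stabilizer in an almost simple group of Lie type, after which one reruns the case analysis of \cite[Sections 5--8]{BLS} against the Alavi--Burness tables, using the new divisibility $s\mid t$ and finishing residual small cases by computer, exactly as you propose. The only refinement worth noting is that the subcases needing repair are precisely those that invoke maximality of $G_\ell$ --- repaired by replacing it with the maximal block stabilizer $G_B$ of index $st+1$ --- rather than antiflag-transitivity per se, but your list of imposed constraints already contains the degree-$(st+1)$ action on line-blocks that does this work.
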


\begin{proof}
 Assume that $G$ and $\calQ$ are as in the statement.  By Proposition \ref{prop:PrimImprimAS}, we have $s \le t$, $s \mid t$, each block of lines has size $t+1$, and $G$ is an almost simple group of Lie type.  Moreover, if $T: = \soc(G)$, then $|T| < |T_P|^3$.  Henceforth in this proof $\calB$ will denote the system of imprimitivity of size $st+1$ on lines, $B$ will denote a block of lines, and $G_B$ is a maximal subgroup of $G$ satisfying 
 \[|G:G_B| = |T:T_B| = st + 1.\]
 
 We begin by noting that $G$ nearly satisfies all the hypotheses in \cite[Hypothesis 5.1]{BLS}.  As long as the proofs of \cite[Propositions 5.3, 5.4, 5.5, 5.6, 6.1, 7.5, 7.6, 8.1]{BLS} do not rely on $G$ acting primitively on $\calL$, they will hold in this case as well.  In particular, we only need new proofs in the subcases of \cite[Propositions 5.3, 5.4, 5.5, 5.6, 6.1, 7.5, 7.6, 8.1]{BLS} that specifically invoke $G_\ell$ being maximal in $G$.  For example, if $T$ is isomorphic to $\PSL(n,q)$ for some $n,q \ge 2$, then we proceed as in \cite[Proposition 5.3]{BLS}.  We begin with \cite[Case 1 of Proposition 5.3]{BLS}, which is located at the bottom of page 1562.  Our argument proceeds exactly the same as in \cite{BLS} until we reach a point in which the maximality of $G_\ell$ in $G$ was used in the proof.  Here, we first invoked the maximality of $G_\ell$ on page 1564, and we have the following assumptions:  $T = \PSL(n,q)$, 
 \[T_P \cong P_2=q^{2(n-2)}\colon {1\over\gcd(n,q-1)}(\GL(2,q)\circ\GL(n-2,q)),\]
 $T_P^{\Gamma(P)} \cong \PGL(n-2, q)$, $t+1 = (q^{n-2} - 1)/(q - 1)$, 
 \[ |\calP| = \frac{(q^n - 1)(q^{n-1}-1)}{(q^2 - 1)(q - 1)},\]
 and the flag stabilizer in $T$ is
 \[T_{P, \ell} = [q^{2(n-2)+(n-3)}]\colon \left(\frac{1}{\gcd(n,q-1)}(\GL(2,q)\circ(\GL(1,q)\times\GL(n-3,q))\right).\]  We must now alter the proof under the assumption that $G_\ell$ is not maximal in $G$, i.e., we are now assuming that $|G:G_B| = |T:T_B| = st + 1$ and $|G_B:G_\ell| = |T_B:T_\ell| = t + 1$.  Since $T_{P,\ell} < T_B$ and $T_B$ is a maximal subgroup of $T$, $T_B$ must be a parabolic subgroup of type $P_1$, $P_2$, or $P_3$.  However, 
\[ |T:T_B| = st + 1 < (s+1)(st + 1) = |T:T_P|,\]
and so it must be that $T_B = P_1$, and so 
\[st + 1 = {n\brack 1} = \frac{q^n - 1}{q-1}.\] 
This means 
\[s+1 = \frac{|\calP|}{|\calB|} = \frac{q^{n-1} - 1}{q^2 - 1},\]
and so $s = (q^{n-1} - q^2)/(q^2 - 1).$  However, $q^2$ divides $s$, whereas the highest power of $q$ dividing $t$ is $q$, a contradiction to $s \mid t$, ruling this case out.

Proceeding through the rest of the proof of \cite[Proposition 5.3]{BLS} on pages 1565--1569 (Cases 2--6), at no other point was the maximality of $G_\ell$ in $G$ invoked, and so these proofs hold in this context as well.
 
We now proceed through the cases here in the order they are encountered in \cite{BLS}.  Note that the basic results contained in Lemmas \ref{lem:GQbasics}, \ref{lem:ratio}, and \ref{lem:Pbounds} are used repeatedly in these proofs.

We next consider the cases when $T = \PSU(n,q)$ and $T_P$ is a geometric subgroup.  We first remark that the statement of \cite[Proposition 5.4]{BLS} contains a misprint: in case (i), the subgroup labeled as $T_P$ should be $T_\ell$ and vice versa\footnote{We also take the opportunity to amend another error in \cite{BLS}. In the introduction, the definition of \emph{Moufang} should be: for each path $(v_0,v_1,v_2,v_3)$, the group $G_{v_0}^{[1]}\cap G_{v_1}^{[1]} \cap G_{v_2}^{[1]}$
acts transitively on $\Gamma(v_3)\setminus \{v_2\}$.}.  

We proceed as in the proof of \cite[Proposition 5.4]{BLS} on page 1570 until we reach the case when $T_P = P_1$, $n = 5$, $|\calP| = (q^2 + 1)(q^5 + 1)$, 
\[T_P=q.q^6\colon {1\over \gcd(5,q+1)}(\GL(1,q^2)\circ\GU(3,q)),\]
and $t+1 = q^3 + 1$.  This implies that $s = q^2$ and so $|T:T_B| = st + 1 = q^5 + 1$.  However, by \cite[Table 5.2A]{KleidmanLiebeck}, $\PSU(5,q)$ does not have a permutation representation of size $q^5 + 1$, a contradiction.

Next, we consider the case (also on page 1570) when $q = 3$, $T_P = P_k$, where $2 \le 2k \le n$ and $n - 2k = 3$, and $G_\ell^{\Gamma(\ell)} = 2^6:\PSU(3,3)$.  We also assume that $\soc(G_P^{\Gamma(P)}) \neq \PSU(3,q)$.  Then we have two possible cases:

\begin{itemize}
 \item[(i)] $k = 1$, $G_P^{\Gamma(P)}$ is solvable, $T = \PSU(5,3)$, and $T_P = 3.3^6:(\GL(1,3^2) \circ \GU(3,3))$,
 \item[(ii)] $k \ge 2$, and $G_P^{\Gamma(P)}\rhd \PSL(k,3^2)$.
 Then $t+1=|\Gamma(P)|={9^k-1\over 9-1}$, and $s+1=2^6$.
\end{itemize}

In case (i), we would need $T_B$ with $|T:T_B| < |T:T_P|$, which is impossible when $T_P = P_1$.  In case (ii),  since $t \le s^2$, we have $(9^k - 9)/(9-1) \le 63^2$, which implies that $k \le 4$.  Since $n - 2k = 3$ and $2 \le 2k \le n$, we have $n = 7, 9 , 11$.  On the other hand, there must exist a maximal subgroup $T_B$ with index $st + 1$, and so by \cite[Table 5.2A]{KleidmanLiebeck},
\[ 63\left( \frac{3^{n-3} - 9}{8}\right) + 1 \ge \frac{(3^n + 1)(3^{n-1}-1)}{8},\]
which does not hold for any $n \ge 3$, ruling this case out.

The arguments are then exactly the same as those in \cite{BLS} until we reach page 1573 and the case when $T_P = P_3$, $n = 7$, and
\[T_P = P_3 = q^9.q^6\colon \frac{1}{\gcd(7,q+1)}\GL(3,q^2).\]
This means that either $t + 1 = q^6$ or $t+1 = q^4 + q^2 + 1$, since $T_P$ is $2$-transitive on $\Gamma(P)$.  If $t + 1 = q^6$, then $s+1 = q^6 |T_\ell|/|T_P|$.  Since $s+1$ is a positive integer, this means $q^{15}$ divides $|T_\ell|$, and, since $|T_B:T_\ell| = t + 1$, this means $q^{21}$ divides $|T_B|$.  Looking at the possibilities for $T_B$ \cite[Tables 8.37, 8.38]{BrayHoltRoney-Dougal}, and noting that $|T_B| > |T_P|$, we have that $T_B = P_1$ or $T_B = P_2$.  Now $T_B \neq P_1$, since $T_\ell$ cannot have a composition factor isomorphic to $\PSU(5,q)$, and so $T_B = P_2$.  Since $\PSU(3,q)$ is a composition factor of $T_B$ and $\PSU(3,q)$ has no subgroup with index dividing $q^6$, this means that $\PSU(3,q)$ must be a composition factor of $T_\ell$ and so $s + 1$ must be $q^3 + 1$ ($q = 3$ is ruled out by inspection).  However, this means $s = q^3 \nmid q^6 - 1$, a contradiction.  Otherwise, $t + 1 = q^4 + q^2 + 1$, and so $s + 1 = (q^4 + q^2 + 1)|T_\ell|/|T_P|,$ which implies that $q^{21}$ divides $|T_\ell|$ and hence divides $|T_B|$, and, as above, we conclude that $T_B = P_1$ or $P_2$. The case of $T_B = P_1$ is ruled out precisely as above, and so $T_B = P_2$.  We again rule out $q = 3$ by inspection, and so $s+1 = q^3 + 1$ as above.  However, this is again a contradiction to $s \mid t$.

We now consider the case (also on page 1573) when $n = 2k$ and
\[T_P = q^{k^2}\colon {1\over\gcd(2k,q+1)}\GL(k,q^2),\] where $2 \le k \le 5$.  In this case, \[|\calP| = (q^{2k-1} + 1)(q^{2k-3} + 1) \cdots (q+1). \]
Either $G_P^{\Gamma(P)}$ is affine of degree $t+1 = q^{2k}$ or almost simple with socle $\PSL(k,q^2)$ and degree $t+1 = (q^{2k} - 1)/(q^2 - 1)$.  We have $q^{k^2 + k(k-1)/2}$ divides $(t+1)|T_\ell| = |T_B|$.  By \cite[Tables 8.10, 8.11, 8.26, 8.27, 8.46, 8.47, 8.62, 8.63]{BrayHoltRoney-Dougal}, noting that $|T_B| > |T_P|$, we have that $T_B = P_j$ for some $j < k$.  First, when $k = 2$, by \cite[Table 5.2A]{KleidmanLiebeck}, $T$ does not have a permutation representation on fewer than $(q+1)(q^3 + 1) = |\calP|$ elements, a contradiction.  When $k \ge 3$, since $|T:T_B| = st + 1$ divides $|\calP|$ and $(q^{2k} - 1)/(q^2 - 1)$ divides $|T:T_B|$ but not $|\calP|$, we reach a contradiction.

After this, we proceed exactly as in \cite{BLS} until we reach the case when $T = \PSU(3,q)$, $T_P = \PSL(2,q)$, and $q = 7, 9, 11$, found on page 1576.  The cases when $t+1 = q+ 1$ are ruled out in \cite{BLS}, so we need only check when $t+1 = 7, 6, 11$ in each of these respective cases.  These are ruled out as in \cite{BLS}, but, since it was not made explicit there, we show details here.  When $q = 7$, we have $(s+1)(6s + 1) = 33712,$ which has no integer roots; when $q = 9$, we have $(s+1)(5s + 1) = 118260,$ which has no integer roots; and when $q = 11$, we have $(s+1)(10s + 1) = 107448$, which has no integer solutions.

Our final case we need to alter the proof for $T = \PSU(n,q)$ and $T_P$ a geometric maximal subgroup is $T= \PSU(4,3)$ and $T_P = \PSL(2,9).2$, found at the bottom of page 1576 of \cite{BLS}.  In this case, we only need to explicitly check $t + 1 = 6$, which implies that $(s+1)(5s+1) = 4536$, which has no integer solutions.

We must next check the cases when $T = \PSp(2n, q)$, and we proceed through the proof of \cite[Proposition 5.5]{BLS}, which begins on page 1577.  First, we consider the case (which begins at the bottom of page 1577) when $T_P^{\Gamma(P)}$ is solvable,
\[T_P = P_n = q^\frac{n(n+1)}{2}:\GL(n,q),\] 
\[ |\calP| = \prod_{i=1}^n(q^i + 1),\]  
$t+1$ must be a power of $q$, and, if $q = p^f$, we must be in one of the following subcases for the same reasons as in \cite{BLS}, where it is noted that the equation $p^{fn(n+1)/6} < 2nfp^f$ must be satisfied: $n = 4$, $p = 2$, $f = 1$; $n = 3$, $p = 2$, $f \le 4$; $n = 3$, $p = 3$, $f \le 2$; $n = 3$, $p = 5$, $f = 1$; or $n = 2$ and $q = 5,7,9$ with $s < q + 1$.  While it would be possible to rule out some cases by looking for a solvable $2$-transitive action, we proceed uniformly.  In each case, we determine the possible values of $t$ by noting that $(t+1) < |\calP| < (t+1)^3$ and $t+1$ is a power of $q$.  Next, we attempt to determine any integer solutions to $(s+1)(st+ 1)  = |\calP|$ by checking whether $(t+1)^2 + 4t(|\calP| - 1)$ is a perfect square.  In no case is there an integer solution for $s$.
 
We next consider the case (found on page 1578 of \cite{BLS}) when $T_P^{\Gamma(P)}$ is solvable, $n = 2$, $T_P = P_1$, $|\calP| = (q^{4} - 1)/(q - 1)$, and $t+1$ is a power of $q$, and $q = 5, 7, 9$.  We check for integer solutions for $s$ as in the previous cases and find that there are none.

We now consider the case when $T_P = P_k$, $T_P^{\Gamma(P)} \rhd q^k:\GL(k,q)$, $t + 1 = q^k$, and $k \le n < 7$, which begins at the top of page 1579 of \cite{BLS} (this is ``case (c)'' within Case 1).  In this case, we have \[(q-1).(\PGL(k,q)\times \PSp(2n-2k,q)) \le T_{P, \ell} < T_B.\]   Unless $k = 2$, $q = 3$, and $T_B \cong 2.(\PSp(2,3) \times \PSp(2n-2, 3))$, we have that $T_B$ is isomorphic to $T_P$, a contradiction.  If $k = 2$ and $q = 3$, then we have $n = 3$, since $n < (3k + 7)/4$ (see \cite{BLS}), and thus $|T:T_B| > |T:T_P|$, a contradiction.

We now consider the case when $T_P = P_k$, $t + 1 = (q^k - 1)/(q - 1)$, $\soc(T_P^{\Gamma(P)}) \cong \PSL(k,q)$, and $2 \le k = n \le 4$.  (This is part of ``case (d)'' within Case 1 on page 1579 of \cite{BLS}.)  If $n = k = 2$, then $t = q$ and $s = q$.  However, by \cite[Table 5.2A]{KleidmanLiebeck}, $\PSp(4,q)$ does not have a subgroup of index $q^2 + 1$.  The case when $k = n = 3$ is ruled out as in \cite{BLS}, and, finally, if $k = n = 4$, we may rule out all but when $q = 2$ as in \cite{BLS}.  When $q = 2$, then $t + 1 = 15$ and $|\calP| = 2295.$  However, $(s+1)(14s + 1) = 2295$ has no integer solutions, a contradiction.

Next, we have the case when $T_P$ is a $\mathcal{C}_1$-subgroup isomorphic to $\gcd(2,q-1).(\PSp(k,q) \times \PSp(2n-k,q))$, where $k$ is even.  This is ``Case 2,'' located toward the bottom of page 1579 of \cite{BLS}.   Whenever $q \neq 2$, we may assume that $k = 2$.  We need to rule out the cases when $k = 2$ and $q = 3, 4,5,7,8,9, 11$, which are done by examining $(s+1)(st+1) = |\calP|$ as above.  The cases when $n = k = 2$ with $t = q$ and $q = 2$ with $k \ge 3$ are ruled out as in \cite{BLS}.  Finally, we consider the case when $q = 2$, $k = 2$, and \[T_B > T_{P,\ell} = \Sp(2,2) \times \POmega^\epsilon(2n-2, 2).\]  Examining the possibilities for $T_B$, this implies that $|T:T_B| \ge |T:T_P|$, a contradiction to $|T:T_B| < |T:T_P|$.

We now consider the case when $T_P$ is a $\mathcal{C}_2$ subgroup isomorphic to $(q-1)/2.\PGL(n,q).2$, where $q$ is odd.  (This is part of ``Case 3,'' located on page 1580 of \cite{BLS}.)  Most cases are done as in \cite{BLS}, and the sporadic cases with $n = 2$ are ruled out as above by examining $(s+1)(st+1) = |\calP|$.

The final subgroups of $\PSp(2n,q)$ that need to be considered for $T_P$ are $\SO^+(4,q)$, where $q = 4,8$ when $n = 2$; $\SO^-(4,q)$, where $q = 2$ when $n = 2$; or $\SO^+(6,q)$, where $q = 2$ and $n = 3$.  Each of these cases, which correspond to the cases found in Case 7 on page 1581 of \cite{BLS}, is ruled out by inspection, again examining $(s+1)(st + 1) = |\calP|$ for the appropriate values of $t$.

Now assume that $T = \POmega^\epsilon(n,q)$, where $n \ge 7$.  These cases correspond to those found in \cite[Proposition 5.6]{BLS}.  Our first case is $n = 7$, $q$ odd, \[T_P = q^3.q^3.\GL(3,q),\] \[|\calP| = \frac{(q^6 -1)(q^4 - 1)(q^2 - 1)}{(q^3 - 1)(q^2 - 1)(q - 1)} = (q^3 + 1)(q^2 + 1)(q + 1),\] and either $t+1 = q^3$ or $t+1 = q^2 + q + 1$.  (This is part of ``Case 2,'' found on page 1583 of \cite{BLS}.)  Since $s+1$ divides $|\calP|$, $|\calP|$ is coprime to $q$, and $|T:T_B| = |\calP|/(s+1)$, we have that $|T:T_B|$ is coprime to $q$, and so $|T|_q = q^9$ divides $|T_B|$, implying that $T_B$ is a maximal parabolic subgroup.  Moreover, since $|T:T_B| < |T:T_P|$, we have that $T_B = P_1$ or $T_B = P_2$.  If $T_B = P_1$, then
\[ st + 1 = |T:T_B| = \frac{q^6 -1}{q-1} = (q^3 + 1)(q^2 + q + 1),\] implying that $q^2 + q + 1$ divides $(q^2 + 1)(q + 1)$ (since $st + 1$ divides $|\calP|$), a contradiction since
\[(q^2 + 1)(q + 1) = q(q^2 + q + 1) + 1.\]  If $T_B = P_2$, then 
\[st + 1 = |T:T_B| = \frac{(q^6 - 1)(q^4 - 1)}{(q^2 - 1)(q - 1)} = (q^2 + 1)(q^3 + 1)(q^2 + q + 1), \] which leads to a contradiction as in the $T_B  = P_1$ case. 

Next, we consider the cases when $T = \POmega^+(2d,q)$, \[ T_P = q^\frac{d(d-1)}{2} \cdot \frac{1}{q-1}\GL(d,q),\] \[|\calP| = 2(q^{d-1} + 1)(q^{d-1} + 1)\cdots(q + 1), \] and either $t + 1 = q^d$ or $t+1 = (q^d - 1)/(q - 1)$ for $d = 4,5,6$, which are found toward the bottom of page 1584 of \cite{BLS}.  If $d = 4$, then, since both $t + 1$ and $|\calP|$ are coprime to $q$, this means that $|T_B|_q = |T|_q = q^{12}$ and $T_B$ is a parabolic subgroup.  On the other hand, since $|T:T_B|$ divides $|T:T_P|$, we have a contradiction for each choice of parabolic subgroup (since $s > 1$).  When $d = 5, 6$, in each case, since $|T:T_B|$ divides $|\calP|$, we get that $|T_B|_q = |T|_q$, and so $T_B$ is a maximal parabolic subgroup.  We reach a contradiction as in the last case, since for no maximal parabolic subgroup do we get $|T:T_B|$ is a proper divisor of $|\calP|$.  

The final case among orthogonal groups is $T = \POmega^+(8,q)$, $T_P = O^+(4,q^2)$, and $q = 2,3$, which corresponds to Case 4(a) on page 1585 of \cite{BLS}.  In each case, there are no integer solutions to $(s+1)(st+1) = |\calP|$ in the sporadic cases that need checked.  

We can now conclude that $T_P$ is not a geometric maximal subgroup of a classical group, and the proof of \cite[Proposition 6.1]{BLS} does not require $T_\ell$ to be maximal in $T$, and so $T_P$ cannot be a $\mathcal{C}_9$-subgroup of $T$, either.

Among the novelty maximal subgroups of classical groups, which are considered in \cite[Section 7]{BLS}, there is only one case that needs to be reconsidered: $T = \PSL(n,q)$, $T_P$ is a $P_{m,n-m}$ type subgroup, where
\[T_P \cong [q^{2n-3}]:[a_{1,1,n-1}^+/\gcd(q-1,n)].(\PSL_1(q)^2
  \times \PSL_{n-2}(q)).[b_{1,1,n-2}^+],\] \[|\calP| = \frac{(q^n - 1)(q^{n-1} - 1)}{(q-1)^2}, \] $t + 1 = (q^{n-2} - 1)/(q - 1)$, and $s < \sqrt{2}q^{n/2}$ (see \cite[p. 1588]{BLS}, \cite[Proposition 4.1.22]{KleidmanLiebeck} for further details).  This implies that 
  \[T_B > T_\ell > T_{(\Gamma(P))} \ge [q^{2n - 3}]:\PSL(n-3, q),\] and so $T_B$ is a $P_m$ or $P_{m,n-m}$ type subgroup of $T$ with $m \le 3$.  Any choice of $T_B$ with $|T:T_B|$ dividing $|\calP|$ forces $s > \sqrt{2}q^{n/2}$, a contradiction.
  
Finally, we consider the possibility that $T$ is an exceptional group of Lie type (see \cite[Section 8]{BLS}), and there are only two cases that need to be considered.  The first case is $T = F_4(q)$, $q$ is even, $t + 1 = q^6$, \[T_P = (q^6 \times q^{1+8}): \Sp(6,q).(q-1),\] and \[|\calP| = \frac{(q^{12}-1)(q^8 - 1)}{(q^4-1)(q - 1)}. \] (This is part of ``Case (D4),'' found at the bottom of page 1597 of \cite{BLS}.)  However, $|\calP|$ is the size of the minimal permutation representation of $T$ by \cite{Vasil'ev}, a contradiction to $|T:T_B| < |\calP|$.

Our last case is $T = G_2(q)$, 
\[T_P = [q^5]:\GL(2,q),\] and \[|\calP| = \frac{q^6 - 1}{q - 1},\] which is ``Case (D8),'' found at the bottom of page 1598.  As in the previous case, by \cite{Vasil'ev} $|\calP|$ is the size of the minimal permutation representation of $T$, a contradiction to $|T:T_B| < |T|$.

Therefore, if $\calQ$ is a locally $(G,2)$-transitive generalized quadrangle and $G$ is primitive on points but not on lines, then $\calQ$ is the unique generalized quadrangle of order $(3,5)$. 
\end{proof}

%
%

\section{Primitive on points and lines} 
\label{sect:prim2}

Finally, we must consider the case when $\calQ$ is a locally $(G,2)$-transitive generalized quadrangle, where $G$ is an almost simple group of Lie type acting primitively on both points and lines.  This final case will allow us to prove Theorem \ref{thm:main}.

\begin{thm}
\label{thm:PrimPrimClassical}
Let $\calQ$ be a thick locally $(G,2)$-transitive generalized quadrangle, where $G$ is an almost simple group of Lie type acting primitively on both points and lines of $\calQ$, and let $T := \soc(G)$.  Then $\calQ$ is a classical generalized quadrangle.   
\end{thm}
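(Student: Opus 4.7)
The plan is to reduce the theorem essentially immediately to the case analysis already carried out in \cite[Sections 5--8]{BLS}. Since $G$ is primitive on both $\calP$ and $\calL$, both the point stabilizer $G_P$ and the line stabilizer $G_\ell$ are maximal subgroups of $G$. By Proposition \ref{prop:Lie} and Theorem \ref{thm:whenlargeTP}, $G$ is almost simple with socle $T$ a simple group of Lie type, and $T_P$ is a large subgroup of $T$ in the sense that $|T| < |T_P|^3$. Together with Lemma \ref{lem:ratio}, which yields $|G_\ell|/|G_P| = (s+1)/(t+1)$, one sees that $T_\ell$ is also a maximal subgroup whose order is tightly controlled by $|T_P|$.

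The first step would be to invoke the classification of large maximal subgroups of finite simple groups due to Alavi and Burness \cite{largesubs}. This provides a short list of candidate conjugacy classes for $T_P$ in each Lie type family: parabolic subgroups, a handful of geometric $\mathcal{C}_i$-subgroups, and a small number of $\mathcal{C}_9$ possibilities. For each candidate, the arithmetic constraint $|T:T_P| = (s+1)(st+1)$, the inequalities of Lemma \ref{lem:GQbasics}, and the requirement that $G_P^{\Gamma(P)}$ act $2$-transitively on a set of size $t+1$ (so that $T_P$ has an appropriate $2$-transitive quotient action) restrict the possibilities severely. In particular, in each case one can compute $t+1$ from the structure of $T_P$ and then deduce $s$ from $|T:T_P|$.

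The bulk of the work is to run through the surviving candidates and show that, apart from the classical families of Table \ref{tbl:classical}, no admissible $(T,T_P,T_\ell)$ triple exists. This analysis is exactly what was done in \cite[Sections 5--8]{BLS} under the stronger hypothesis of antiflag-transitivity. The key observation is that antiflag-transitivity was used in \cite{BLS} only to force primitivity on both points and lines (hence maximality of $G_P$ and $G_\ell$) and $2$-transitivity of $G_P$ on $\Gamma(P)$ and $G_\ell$ on $\Gamma(\ell)$; all of these hold verbatim under our current hypotheses. Thus for each case one simply quotes the corresponding subcase of \cite[Sections 5--8]{BLS}, noting any place where an arithmetic refinement is needed to cope with the fact that the $2$-transitive action is on lines through a point rather than on non-collinear points.

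The main obstacle will be verifying that no argument in \cite[Sections 5--8]{BLS} silently depends on antiflag-transitivity beyond maximality of the stabilizers and local $2$-transitivity; any step that uses a global action of $G_P$ on non-collinear points or on antiflags at $P$ would require replacement. However, the introductory outline of the paper asserts that this verification is routine, and the substantive new work has already been done in the preceding sections to reach the hypothesis of primitivity on both sides together with largeness of $T_P$. The surviving cases are then precisely those yielding the classical generalized quadrangles $\W(3,q)$, $\Q(4,q)$, $\Q^-(5,q)$, $\herm(3,q^2)$, $\herm(4,q^2)$, and their duals listed in Table \ref{tbl:classical}.
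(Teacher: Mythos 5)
Your proposal is correct and follows essentially the same route as the paper: the hypotheses here (almost simple of Lie type, primitive on both points and lines, $T_P$ large by Theorem \ref{thm:whenlargeTP}) are exactly \cite[Hypothesis 5.1]{BLS}, so the case analysis of \cite[Propositions 5.3--8.1]{BLS} applies verbatim and yields that $\calQ$ is classical. The only content the paper adds beyond this reduction is the correction of two small misprints in \cite{BLS}; the verification you flag as a potential obstacle is unnecessary because those propositions were already stated under Hypothesis 5.1 rather than under antiflag-transitivity.
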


\begin{proof}
 Assume $G$ is as in the statement.  Then, by Theorem \ref{thm:whenlargeTP}, a point stabilizer
 $T_P$ is large and $G$ satisfies \cite[Hypothesis 5.1]{BLS}.  We take this opportunity to correct a small misprint in the argument of \cite[Proposition 5.6]{BLS}.  In the case when $T = \POmega^+(2d, q)$, $4 \le k = d \le 8,$ \[T_P = q^{\frac{d(d-1)}{2}} \cdot \frac{1}{q-1}\GL(d,q), \] and either $t+1 = q^d$ or $t+1 = (q^d - 1)/(q-1)$, it should be
 \[ |\calP| = 2(q^{d-1} + 1)(q^{d-2} + 1) \cdots (q + 1).\]
However, the arguments used in the proof of \cite[Proposition 5.6]{BLS} otherwise remain exactly the same.  Also, in the statement of \cite[Proposition 5.4]{BLS} there is a misprint: in (i), the group that is specified as $T_P$ should be $T_\ell$, and vice versa.

By \cite[Propositions 5.3, 5.4, 5.5, 5.6, 6.1, 7.5, 7.6, 8.1]{BLS}, $\calQ$ is a classical generalized quadrangle.
\end{proof}

We can now prove the main result.

\begin{proof}[Proof of Theorem \ref{thm:main}]
 Let $\calQ$ be a locally $(G,2)$-transitive generalized quadrangle.  By Theorem \ref{thm:prim}, up to duality, $G$ is primitive on points.  The result now follows by Theorems \ref{thm:PrimImprim35} and \ref{thm:PrimPrimClassical}.
\end{proof}

\subsection*{Acknowledgements}

This paper forms part of an Australian Research Council Discovery Project (DP120101336).
The second author acknowledges the support of NSFC grants 11771200 and 11231008.
The authors are grateful to Michael Giudici for pointing out the valuable work of \cite{Guralnick:2017aa}, and the authors wish to thank the referees for extremely thorough and helpful reports.

\end{document}